\documentclass[]{article}

\usepackage{amsmath}
\usepackage{amsfonts}
\usepackage{amssymb}
\usepackage[amsmath,thmmarks,standard]{ntheorem}
\usepackage{tikz}
\usepackage{authblk}

\usepackage[utf8x]{inputenc}
\usepackage[hebrew,english,bidi=default]{babel}

\linespread{1.25}
\usepackage[a4paper, hmargin=3cm, vmargin={4cm, 4cm}]{geometry}

\newcommand{\RR}{\mathbb{R}}
\newcommand{\DD}{\mathbb{D}}
\newcommand{\CC}{\mathbb{C}}
\newcommand{\sing}{\operatorname{Sing}}

\newcommand{\sph}{\mathbb{S}}

\newcommand{\ddn}{\cdot d\hat{\mathbf{n}}}

\renewtheorem{theorem}{Theorem}
\numberwithin{theorem}{subsection}
\newtheorem{claim}[theorem]{Claim}
\renewtheorem{lemma}[theorem]{Lemma}

\renewtheorem{definition}[theorem]{Definition}

\renewtheorem{corollary}[theorem]{Corollary}

\newcommand{\myenglishtitle}{Constructing Riemannian metrics with prescribed nodal sets for Laplacian eigenfunctions}
\newcommand{\myenglishabstract}{Let $C$ be a configuration of $n$ non-intersecting and smooth ovals in $\sph^2$. We show that there is a Riemannian metric $g$ over $\sph^2$ with a Laplacian eigenfunction whose zero set is $C$, and the corresponding eigenvalue is the $k$-th eigenvalue for $n\leq k \leq \alpha_1 n$. We also have that $\lambda\operatorname{Vol}_g\left(\sph^2\right) = \Theta(n)$. This extends a result by Lisi.
	
	Additionally, assuming $C$ can be drawn as a subgraph of the $m\times m$ grid graph, we show that there is an infinitesimal perturbation of the round metric on $\sph^2$ and a corresponding Laplacian eigenfunction $f$ with eigenvalue $\Theta(m^2)$ such that the zero set of $f$ is topologically equivalent to $C$.}

\newcommand{\myenglishacknowledgements}{I thank Mikhail Sodin for his guidance, discussions, and his many comments on various drafts of this work.
	Additionally I thank Alexander Logunov for discussions and suggesting a nice question which became Theorem \ref{thm:main-perturb-result}.
	I also thank Daniel Peralta-Salas for telling about Lisi's work.}

\title{\myenglishtitle}
\author{Yoav Krauz}
\affil{School of Mathematical Sciences, Tel Aviv University, Tel Aviv 69978, Israel. Email: { \tt yoav.krauz@gmail.com}}

\begin{document}
\maketitle
\begin{abstract}
	\myenglishabstract
\end{abstract}
\section{Introduction}
Given a Riemannian surface $(M,g)$, one may look at eigenfunctions $f$ of the minus $g$-Laplacian operator on $M$, and study various properties of such eigenfunctions $f$. In this work, we study the following problem: given an embedding $C$ of topological circles into $M$, how to choose a Riemannian metric $g$ on $M$ such that $C$ would be the zero set of some eigenfunction $f$ of the minus $g$-Laplacian operator on $M$?

Denote by $\lambda_n$ the $n$-th eigenvalue of the minus $g$-Laplacian operator on $M$.
A theorem of Courant \cite{Courant} says that for an eigenfunction $f$ which corresponds to $\lambda_n$, the zero set of $f$ divides $M$ to at most $n$ regions, which are called nodal domains.

Recall that in the case of $M=\sph^2$ with the regular metric, the eigenvalues are $\lambda=n\left(n+1\right)$ with multiplicity $2n+1$.
In this case, Lewy \cite{Lewy} showed that for every eigenvalue $\lambda=n\left(n+1\right)>0$ there is an eigenfunction whose zero set is either a single curve (when $n$ is odd) or the disjoint union of two closed curves (when $n$ is even).

Continuing with the case of $\sph^2$ with the regular metric, Eremenko, Jakobson and Nadirashvili \cite{NodalSetEremenko} showed that for any set of $n$ disjoint closed curves on $\sph^2$, whose union $C$ is invariant with respect to the antipodal map, there is an eigenfunction $f$ corresponding to $\lambda=n\left(n+1\right)$ such that the zero set of $f$ is topologically equivalent to $C$.

Lisi \cite{Lisi}, using the uniformization theorem, showed that, given a closed connected surface $M$ and a collection of smooth closed curves $C\subset M$ dividing $M$ into two regions, there is a Riemannian metric $g$ on $M$ and an eigenfunction $f$ of the minus $g$-Laplacian such that the zero set of $f$ is $C$.

Canzani and Sarnak \cite{CanzaniSarnak} showed that a bounded connected component of the zero set of a solution to $\Delta u + u = 0$ in $\RR^n$ can have the topology of an arbitrary compact smooth manifold; this is also considered by Enciso and Peralta-Salas \cite{ENCISO2013204} (remark A2).
Analogously, Enciso and Peralta-Salas \cite{ENCISO2013204} showed that for any smooth embedded hypersurface $L$ of $\RR^n$ such that $L$ has no compact connected component and that $L$ is a nonsingular real algebraic hypersurface (this condition can be relaxed), there is a smooth diffeomorphism $\Phi$ of $\RR^n$ such that $\Phi(L)$ is a union of connected components of a level set of a function $u$ satisfying $\Delta u - u = 0$ in $\RR^n$.

Enciso and Peralta-Salas \cite{EncistoPeraltaSalas} showed that, given a closed manifold $M$ with $\dim M\geq 3$ and a closed connected oriented hypersurface $S\subset M$ which divides $M$, there is a Riemannian metric $g$ on $M$ such that $S$ is the zero-set of the first nonconstant eigenfunction $f$ of the minus $g$-Laplacian.

\section{Results}

\begin{figure}
	\centering
	\begin{tikzpicture}[x=0.75pt,y=0.75pt,yscale=-1,xscale=1]
		\def\gridx{190}
		\def\gridy{-40}
		\def\boxwidth{30}
		\def\boxheight{30}
		\def\gridsize{5}
		\def\gridlinewidth{1.5}
		
		\foreach \i in {0,...,\gridsize} {
			\draw [line width=\gridlinewidth] (\gridx,\gridy+\i*\boxheight) -- (\gridx+\gridsize*\boxwidth,\gridy+\i*\boxheight);
		}
		\foreach \j in {0,...,\gridsize} {
			\draw [line width=\gridlinewidth] (\gridx+\j*\boxwidth,\gridy) -- (\gridx+\j*\boxwidth,\gridy+\gridsize*\boxheight);
		}
		
		\def\drawinglinewidth{5}
		
		\newcommand{\lineingrid}[4]{
			\draw [line width=\drawinglinewidth] (\gridx+#1*\boxwidth,\gridy+#2*\boxheight) -- (\gridx+#3*\boxwidth,\gridy+#4*\boxheight);
		}
		
		\lineingrid{0}{0}{3}{0}
		\lineingrid{3}{0}{3}{5}
		\lineingrid{3}{5}{0}{5}
		\lineingrid{0}{5}{0}{0}
		\lineingrid{1}{1}{2}{1}
		\lineingrid{2}{1}{2}{2}
		\lineingrid{2}{2}{1}{2}
		\lineingrid{1}{2}{1}{1}
		\lineingrid{1}{3}{2}{3}
		\lineingrid{2}{3}{2}{4}
		\lineingrid{2}{4}{1}{4}
		\lineingrid{1}{4}{1}{3}
		\lineingrid{4}{0}{5}{0}
		\lineingrid{5}{0}{5}{1}
		\lineingrid{5}{1}{4}{1}
		\lineingrid{4}{1}{4}{0}
		\lineingrid{4}{2}{5}{2}
		\lineingrid{5}{2}{5}{4}
		\lineingrid{5}{4}{4}{4}
		\lineingrid{4}{4}{4}{2}
		
		\draw (0,0) ellipse (70 and 80);
		\draw (-9,-30) ellipse (30 and 20);
		\draw (11,30) ellipse (30 and 20);
		\draw (-20,110) ellipse (25 and 20);
		\draw (40,107) ellipse (25 and 20);
		
	\end{tikzpicture}
	\caption{An example of a configuration $C$ of $5$ ovals and a drawing of $C$ in the $5\times 5$ grid graph}
	\label{fig:drawing-ovals-in-grid}
\end{figure}

Our first result is the following, extending the above-mentioned result of Lisi \cite{Lisi} with asymptotic estimation of the eigenvalue index and the geometry of the metric:
\begin{theorem}\label{thm:ovals}
	For any configuration $C$ of $n$ non-intersecting and smooth ovals on $\sph^2$, there is a Riemannian metric $g$ over $\sph^2$ and a $g$-Laplacian eigenfunction $f:\sph^2\to\RR$ with eigenvalue $-\lambda$ such that the zero set of $f$ is $C$, and $\lambda_{ n}\leq\lambda \leq \lambda_{\alpha_1 n}$, and $\lambda \operatorname{Vol}_g(\sph^2) \in\left[\alpha_2 n, \alpha_3 n\right]$ (here $\alpha_1, \alpha_2, \alpha_3 > 0$ are absolute constants).
	
	Our construction of $g$ also satisfies the property that at each point $p\in\sph^2$, the Gaussian curvature ${\kappa_g(p)}$ of $g$ at $p$ satisfies ${\left|\kappa_g(p) \right|<\alpha_4 \lambda}$ (here $\alpha_4$ is an absolute constant).
\end{theorem}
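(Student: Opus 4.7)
The strategy is to realize $(\sph^2, g)$ as the gluing of $n+1$ model Riemannian surfaces-with-boundary---one per connected component of $\sph^2\setminus C$---each carrying a first Dirichlet eigenfunction of a common eigenvalue $\lambda$, so that the pieces glue across the ovals to a single smooth eigenfunction $f$ of $-\Delta_g$ whose zero set is exactly $C$.

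\textbf{Combinatorial setup and local model.} An Euler-characteristic count gives $n+1$ components $\Omega_0,\dots,\Omega_n$ of $\sph^2\setminus C$; their adjacency graph (vertices: regions, edges: ovals) has $n+1$ vertices and $n$ edges and is connected, hence is a tree, in particular bipartite. Assign signs $s_i\in\{\pm1\}$ with opposite signs on opposite sides of each oval. Fix a universal value $\lambda=\Theta(1)$ and a length $\ell_j=\Theta(1)$ for each oval $c_j$. On a flat collar $T_j=[-\pi/(2\sqrt{\lambda}),\pi/(2\sqrt{\lambda})]\times\sph^1_{\ell_j}$ about $c_j$, the function $\sin(\sqrt{\lambda}\,r)$ is a $-\lambda$-eigenfunction vanishing exactly on $c_j$ and attaining $\pm1$ with vanishing normal derivative at $r=\pm\pi/(2\sqrt{\lambda})$.

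\textbf{Model caps and gluing.} For each region $\Omega_i$ we build a compact Riemannian surface-with-boundary $(M_i,g_i)$ diffeomorphic to $\Omega_i$ such that: each boundary component is isometric to a flat half-cylinder $[0,\pi/(2\sqrt{\lambda})]\times\sph^1_{\ell_j}$, the first Dirichlet eigenvalue of $-\Delta_{g_i}$ is exactly $\lambda$, the corresponding positive first eigenfunction $u_i$ agrees with $\sin(\sqrt{\lambda}\,r)$ on every collar, $\operatorname{Area}(M_i,g_i)=\Theta(1/\lambda)$, and $|\kappa_{g_i}|=O(\lambda)$. For a disk-type region we take a surface of revolution $dr^2+\rho(r)^2 d\theta^2$ with $\rho\equiv\ell_j/(2\pi)$ on the collar, tapering smoothly to an apex; the radial eigenfunction solves $u''+(\rho'/\rho)u'+\lambda u=0$ with initial data $u=1,u'=0$ at the collar's inner end, and a shooting/continuity argument in the profile $\rho$ produces one ensuring $u'(R_i)=0$ at the apex with $u>0$ throughout and first-eigenvalue exactly $\lambda$. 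Annular regions admit a direct cylinder model of length $\pi/\sqrt{\lambda}$; regions with more boundary components are realized as a small round sphere of radius $\sim 1/\sqrt{\lambda}$ with the required number of disks removed and cylindrical ends attached, with one geometric parameter adjusted to hit first eigenvalue $\lambda$. Identifying matching boundary cylinders of adjacent pieces by orientation-preserving isometries yields a smooth global metric $g$ on $\sph^2$ (since both sides are flat cylinders near each oval), and defining $f:=s_i u_i$ on $M_i$ produces a globally smooth function with $-\Delta_g f=\lambda f$ and zero set $C$, because the sign convention aligns the normal derivatives $\pm\sqrt{\lambda}$ across each $c_j$.

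\textbf{Spectral bookkeeping and main obstacle.} The $n+1$ nodal domains of $f$ force $\lambda\geq\lambda_n$ via Courant's nodal-domain theorem. Summing areas gives $\operatorname{Vol}_g(\sph^2)=\Theta(n/\lambda)$, so $\lambda\operatorname{Vol}_g(\sph^2)=\Theta(n)$, and the bound $\lambda\leq\lambda_{\alpha_1 n}$ follows from an eigenvalue-counting inequality $\#\{k:\lambda_k\leq\lambda\}\leq C\lambda\operatorname{Vol}_g(\sph^2)=O(n)$, valid on surfaces with $|\kappa_g|/\lambda$ bounded. The main difficulty is the preceding model-cap construction: one must simultaneously achieve the correct Dirichlet spectrum, the isometric flat-collar boundary condition with matching $\sin$-profile for $u_i$, the prescribed topology of $\Omega_i$ (possibly with several boundary components), and the area and curvature estimates. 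This is handled by coupling the shooting method for the radial ODE with continuity of Dirichlet eigenvalues and eigenfunctions under smooth perturbation of the metric profile, iterating when multiple boundary components are present.
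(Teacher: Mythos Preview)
Your overall gluing strategy is the same as the paper's, but the ``model cap'' step for regions with three or more boundary components has a genuine gap. You require the first Dirichlet eigenfunction $u_i$ to equal $\sin(\sqrt\lambda\,r)$ on every collar, which in particular forces $u_i$ to be constant in the angular variable there. For a rotationally symmetric disk or a straight cylinder this is automatic, but for a sphere with $k\ge3$ disks excised and flat cylindrical ends attached there is no global $S^1$-symmetry, and the first eigenfunction will generically carry nonzero higher Fourier modes along each boundary circle. A one-parameter shooting argument can tune the eigenvalue to $\lambda$, but it cannot simultaneously kill infinitely many Fourier coefficients on $k$ boundary components, so the resulting pieces will not glue to even a $C^1$ function across the ovals. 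The phrase ``iterating when multiple boundary components are present'' does not address this obstruction.

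The paper gets around this in two moves you are missing. First, each nodal domain is decomposed further into copies of three \emph{fixed} simple blocks---disk, cylinder, and pair-of-pants---glued along internal Neumann-type boundaries, so that no piece ever has more than three boundary circles and every piece is one of three predetermined Riemannian surfaces (Lemma~\ref{lemma:what-to-glue}). Second, and this is the technical heart, on each simple block the construction is reversed: one first \emph{prescribes} the function $f$ (equal to $\sin r$ or $\cos r$ near each boundary by fiat) and then builds a metric for which $f$ is an eigenfunction, via an interpolation theorem (Theorem~\ref{thm:interpolation-metric}) that extends the metric from the collar into the interior. This replaces your eigenvalue shooting entirely. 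A side benefit is that only three fixed Riemannian pieces ever occur, so the curvature bound is immediate, and the upper bound $\lambda\le\lambda_{\alpha_1 n}$ follows from Dirichlet--Neumann bracketing against three fixed spectra rather than from the finite-frequency Weyl-type counting inequality you invoke, which would itself require justification.
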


While it is possible by principle to calculate estimates for the constants $\alpha_i$ used in the statement of theorem \ref{thm:ovals}, in our opinion such calculation would probably be long, technical, and would result in estimates which are unlikely to be sharp.

To prove theorem \ref{thm:ovals} we will prove that any function $f$ on a Riemannian surface with boundary $M$ such that $f$ has no critical points and $f$ is a Laplacian eigenfunction in a neighborhood of $\partial M$, under certain conditions which are also necessary, can be made into an actual Laplacian eigenfunction by changing the metric away from $\partial M$ (see theorem \ref{thm:interpolation-metric} in section \ref{section:interpolation-formulation}).

To state our second result we will formally define when an oval configuration can be drawn in a graph:
\begin{definition} \label{def:can-be-drawn}
	Let $C$ be a configuration of non-intersecting and smooth ovals on $\sph^2$, and let $G$ be a finite planar graph with a fixed embedding into $\sph^2$. Then a \emph{drawing} of $C$ in $G$ is a subgraph $G'$ of $H$ which is $2$-regular (hence each connected component of $H$ is a cycle) such that the set of connected components of $H$, each one viewed as an oval, is topologically equivalent to $C$.
	
	We will say that $C$ \emph{can be drawn} in $G$ if such drawing as above exists.
\end{definition}
See figure \ref{fig:drawing-ovals-in-grid} for an example of a drawing of an oval configuration in a grid graph.

Our second result is the following, which affirmatively answers a question raised by Logunov in a private communication:
\begin{theorem}\label{thm:main-perturb-result}
	There is an absolute constant $C$ that the following holds: let $n\geq 1$ be a natural number, and let $X$ be an oval configuration in $\sph^2$ which can be drawn in the $n\times n$ grid graph. Then there is an infinitesimal perturbation $g$ of the round metric on $\sph^2$ and an eigenfunction $f$ of the minus Laplacian $-\Delta_g$ such that $f$ is a perturbation of a spherical harmonic of degree $Cn$ and the zero set of $f$ is topologically equivalent to $X$.
\end{theorem}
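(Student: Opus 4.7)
The plan is to start from a spherical harmonic $h_0$ of degree $N = Cn$ on the round sphere whose nodal set contains an $n \times n$ grid of transversal self-intersections that can be identified with the vertex set of the grid graph $G$, and then to realize the drawing $G' \subset G$ of $X$ by a matched infinitesimal perturbation of the metric and the eigenfunction. Concretely, I would take $h_0$ to be a tesseral spherical harmonic $Y_N^M$ with $N = Cn$ and $M$ of order $n$; its nodal set consists of $N-M$ circles of latitude together with $2M$ meridians, meeting transversally in $2M(N-M)$ saddle points, from which one selects an $n \times n$ sub-grid matching the vertices of $G$. In suitable local coordinates near each saddle $p_{ij}$, one has $h_0 \approx c_{ij}(u-u_i)(v-v_j)$.

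Given the drawing $G' \subset G$, I would assign to each $p_{ij}$ its prescribed local topological outcome (a hyperbolic turn, a pass-through of a single arc, or a local deletion of both arcs) and realize it by adding to $h_0$ a correction $\rho_{ij}$ supported in a small neighborhood of $p_{ij}$: a signed constant for the turn, a correction vanishing along the surviving arc for the pass-through, and a dominating local bump for the deletion. Summing over the grid yields a model function $\tilde f = h_0 + \epsilon\rho$ with $\rho = \sum_{ij}\rho_{ij}$ whose zero set is topologically equivalent to $X$ for all sufficiently small $\epsilon > 0$.

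To turn this model into a genuine Laplacian eigenfunction, I would appeal to first-order perturbation theory: a metric perturbation $g = g_0 + \epsilon g_1$ yields an eigenfunction $f = h_0 + \epsilon h_1 + O(\epsilon^2)$ with $h_1$ solving the inhomogeneous equation $(\Delta_0 + \lambda_0) h_1 = B(g_1)h_0 - \lambda_1 h_0$, where $B(g_1)$ denotes the first variation of the Laplacian induced by $g_1$. The metric perturbation $g_1$ is built as a sum of symmetric tensor fields supported in disjoint neighborhoods of the $p_{ij}$, each engineered to produce $\rho_{ij}$ locally; the Fredholm alternative together with the localization then gives an $h_1$ matching $\rho$ modulo a component in the $\lambda_0$-eigenspace, which only rotates $h_0$ within its eigenspace and does not affect the local nodal topology at the $p_{ij}$.

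The main obstacle is the high multiplicity $2Cn+1$ of the eigenvalue $\lambda_0 = Cn(Cn+1)$ on the round sphere: a generic $g_1$ splits it into many simple eigenvalues, and one must identify the branch along which the perturbed eigenfunction stays close to $h_0$, so that the nodal set actually undergoes the predicted local modifications rather than settling onto some other combination within the eigenspace. This is arranged by choosing $g_1$ so that $h_0$ diagonalizes the induced first-order operator on the $\lambda_0$-eigenspace, which can be achieved either by making $g_1$ respect the symmetries fixing $h_0$ or by a direct computation exploiting its localization near the $p_{ij}$. A final implicit-function-theorem argument at regular points of the nodal set of $h_0$ (where transversality of the grid arcs between crossings is automatic) then promotes the first-order statement to the existence, for all sufficiently small $\epsilon$, of a genuine metric $g$ and eigenfunction $f$ with the desired nodal topology.
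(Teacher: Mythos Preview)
Your proposal has two genuine gaps, one combinatorial and one analytic.

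\textbf{Combinatorial gap.} You propose three local modifications at saddles of $h_0$: a turn, a pass-through, or a deletion. But for small $\epsilon$ the nodal set of $h_0+\epsilon\rho$ is a $C^0$-small perturbation of the \emph{entire} nodal graph of $h_0$; all latitude and meridian arcs persist, and the only freedom at a saddle is which of the two hyperbolic resolutions occurs. A bump vanishing along one arc does not delete the other: near $h_0\approx xy$, adding $\epsilon y$ gives $y(x+\epsilon)$, and both arcs are still there. So any small perturbation produces a nodal set equivalent to some resolution of the \emph{full} grid, which typically has far more ovals than $X$. The paper handles this by a separate combinatorial argument (Lemmas~\ref{lemma:chessboard-zigzag}, \ref{lemma:from-drawing-to-coarse-perturb}, \ref{lemma:reduction}): one re-embeds $X$ in a larger grid so that its canonical two-coloring matches the chessboard coloring of the faces, guaranteeing that some resolution of the full grid \emph{nicely contains} $X$, and then shows that any nicely-contained sub-configuration is itself realized by another choice of resolutions. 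This reduction is essential and absent from your outline.

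\textbf{Analytic gap.} Your appeal to first-order spectral perturbation theory is exactly where the multiplicity obstacle bites, and your proposed fix does not work. The component of $h_1$ in the $\lambda_0$-eigenspace is not harmless: a generic degree-$N$ harmonic does \emph{not} vanish at the saddles of $Y_N^M$, so an uncontrolled eigenspace contribution of size $\epsilon$ is of the same order as your intended correction $\epsilon\rho_{ij}$ near each $p_{ij}$ and can flip the resolution there. The symmetry suggestion also fails, since $\rho$ depends on the drawing of $X$ and respects no useful symmetry. The paper avoids spectral perturbation entirely: it first writes down the desired function $f_t$ (adding signed bumps, locally equal to spherical harmonics, at the saddles), then constructs a vector field $\nabla f+tu$ with divergence $-\lambda f_t$ agreeing with $\nabla f_t$ away from a compact set, and finally invokes Lemma~\ref{lemma:pointwise} to produce a metric $g_t$, with the standard area form, for which $\nabla_{g_t}f_t=\nabla f+tu$. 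Thus $f_t$ is an exact eigenfunction of $-\Delta_{g_t}$ with the \emph{unchanged} eigenvalue $\lambda$, and no eigenvalue splitting or branch selection ever arises.
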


It should be noted that the condition that $X$ can be drawn in the $n\times n$ grid graph is not to restrict the set of configurations for which theorem \ref{thm:main-perturb-result} applies; it just means that $n$ measures the complexity of $X$, differently then counting the ovals in $X$.
Indeed, note that any configuration of $m$ non-intersecting and smooth ovals can be drawn in a $k \times k$ grid graph with $k=O(m)$. However, for some configurations one can do better: for example, the configuration which consists of $m$ ovals where no oval is inside any other oval, can be drawn in a $k\times k$ grid graph with $k=O(\sqrt k)$. On the other hand, for the configuration which is formed by $m$ concentric circles to be drawn in a $k\times k$ grid graph we must have $k=\Omega(m)$.

\section{Acknowledgments}
\myenglishacknowledgements

\section{Notation}\label{section:notation}
Throughout this text, all the surfaces being considered will be smooth and orientable.

A measure $\mu$ on a surface will be called \emph{smooth} if on any local coordinate chart it can be written as $F d\nu$ where $d\nu$ is the Lebesgue measure and $F$ is a strictly-positive smooth function.
On a smooth (orientable) surface, every smooth measure uniquely corresponds to a smooth $2$-form such that the two agree on the area of any positively-oriented cell. Locally, the smooth measure $Fd\nu$ corresponds to the $2$-form $Fdx\wedge dy$ where $x,y$ are the local coordinates.
A smooth Riemannian metric $g$ naturally gives rise to a smooth $2$-form given locally as $\sqrt{\left|g\right|}dxdy$, and therefore also to a smooth measure $\mu$. For this choice of $\mu$ we will say that $\mu$ is \emph{compatible with} $g$ (or that $g$ is compatible with $\mu$).

Given a Riemannian metric $g$ and a smooth function $f$ we denote by $\nabla_g f$ the gradient of $f$ (as a vector field) calculated using the metric $g$. Given a vector field $u$ we denote by $\nabla \cdot u$ the divergence of $u$ (note that it is implicitly depends only on a measure, not on a Riemannian metric); and we denote by $\Delta_g$ the Laplacian operator corresponding to the metric $g$.

Given a smooth surface $M$ with a smooth area measure $\mu$, a vector field $u$, and a piecewise smooth regular curve $\gamma:\left[0,L\right]\to M$ we denote the flux integral of $u$ through $\gamma$ in the following way
\[
\int_\gamma u\cdot d\hat{\mathbf{n}} 
= \int_0^L \mu(u(t)\wedge \dot\gamma(t))dt
,
\]
where $\mu(v_1 \wedge v_2)$ is the local area $2$-form corresponding to $\mu$ applied on vectors $v_1, v_2$.
Note that this does not depends on a Riemannian metric on $M$ except for the area measure $\mu$.
When $\gamma$ is chosen to parameterize the boundary $\partial R$ of a piecewise smooth subset $R\subset M$, we will choose its orientation so that $\int_{\partial M} u\cdot d\hat{\mathbf{n}} := \int_\gamma u\cdot d\hat{\mathbf{n}}$ will be positive when $u$ points to outside $R$.

\section{Common Lemma}
In both of our results we use the following lemma, which allows us to translate the problem of building a Riemannian metric $g$ into building a vector field to act as the gradient $\nabla_g f$ of a given function $f$:

\begin{lemma}\label{lemma:pointwise}
	Let $M$ be a smooth surface endowed with a smooth measure $\mu$. Given a smooth vector field $u$ and a smooth covector field $\omega$ such that $\left\langle \omega,u\right\rangle>0$, there is a unique Riemannian metric $g$ compatible with $\mu$ such that $\omega_i = g_{ij}u^j$. Additionally, $g$ is smooth.
\end{lemma}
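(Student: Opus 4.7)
My plan is to reduce the lemma to a pointwise linear-algebra statement and then read off smoothness from the explicit formulas. At each point $p\in M$, given $u(p)\in T_pM$ (necessarily nonzero because $\langle\omega,u\rangle>0$), $\omega(p)\in T_p^*M$, and the area $2$-form $\mu_p$ induced by $\mu$, I need to show there is a unique positive-definite inner product $g_p$ on $T_pM$ with $g_p(u,\cdot)=\omega$ whose induced area form is $\mu_p$.

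To carry this out at a fixed point I would pick any vector $v\in T_pM$ linearly independent from $u$ and describe $g_p$ in the basis $\{u,v\}$. The constraint $g_p(u,\cdot)=\omega$ immediately forces $g_p(u,u)=\omega(u)$ and $g_p(u,v)=\omega(v)$. Compatibility with $\mu_p$ in dimension two is equivalent to the Gram-determinant identity
\[g_p(u,u)\,g_p(v,v)-g_p(u,v)^2 = \mu_p(u\wedge v)^2,\]
which is linear in the single remaining unknown and yields
\[g_p(v,v)=\frac{\omega(v)^2+\mu_p(u\wedge v)^2}{\omega(u)},\]
well-defined since $\omega(u)>0$. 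The resulting symmetric form is positive-definite by Sylvester's criterion ($g_p(u,u)=\omega(u)>0$ and the $2\times 2$ determinant equals $\mu_p(u\wedge v)^2>0$), and uniqueness (hence independence of the auxiliary choice of $v$) is clear since all three entries in the basis are pinned down.

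For smoothness, in any local coordinate chart I would cover the chart by two open sets on each of which one of $\partial_x,\partial_y$ is linearly independent from $u$; on each such set take that coordinate vector field as a smooth auxiliary $v$, and the three entries $g_p(u,u),g_p(u,v),g_p(v,v)$ given above become smooth functions of $p$, making $g_{ij}$ smooth. The only conceptual wrinkle is that the compatibility condition $\det g = F^2$ is a priori quadratic and could in principle admit spurious solutions; the linearization comes for free from the observation that, after fixing $g_p(u,u)$ and $g_p(u,v)$ by the linear constraint from $\omega$, only one entry of $g_p$ in the basis $\{u,v\}$ is left, and the determinant condition is linear in it. I do not anticipate any nontrivial obstacle beyond this algebraic point.
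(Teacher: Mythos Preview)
Your argument is correct. Both you and the paper reduce to pointwise linear algebra and read off smoothness from the resulting explicit formulas; the execution differs. You compute the Gram matrix of $g$ in a basis $\{u,v\}$, where the constraint $g(u,\cdot)=\omega$ fixes two entries and the area condition becomes linear in the third, then check positive-definiteness via Sylvester. The paper instead rescales $g$ by the density $F$ so that the unknown becomes a symmetric positive-definite $A\in SL_2(\RR)$ with $Au=v$ (here $v$ encodes $\omega$ in coordinates), and uses the $90^\circ$ rotation $J=\left(\begin{smallmatrix}0&1\\-1&0\end{smallmatrix}\right)$ together with $\det A=1$ to deduce $AJv=Ju$, obtaining the single closed formula $A=\langle u,v\rangle^{-1}\bigl(v\;\; Ju\bigr)\bigl(v\;\; Ju\bigr)^{t}$. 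Your route is slightly more elementary and makes positive-definiteness transparent, at the cost of patching over local choices of the auxiliary vector $v$; the paper's route gives one formula valid throughout the chart with no auxiliary choice.
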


Proof of lemma \ref{lemma:pointwise} is standard and given in appendix \ref{section:appendix-proof-of-pointwise-lemma}.

\section{Blocks}
To prove theorem \ref{thm:ovals} we will show the existence of $3$ building blocks that can be joined together to form the construction required in theorem \ref{thm:ovals}.
\begin{definition} \label{def:block}
	A \emph{block} consists of a compact Riemannian surface with boundary $(M,g)$ with a smooth function $f$ on it, with the following properties:
\begin{enumerate}
	\item \label{cond:block-eigenfunc} $f$ is an eigenfunction of the minus $g$-Laplacian with eigenvalue $\lambda=1$;
	\item $f$ does not change sign on $M$;
	\item \label{cond:block-boundary-isometry} For each boundary component $C\subset \partial M$ there is a neighborhood $C\subset U\subset M$ with an isometry $\phi_C:U\to\left[0,\epsilon\right)\times\sph^1$ for some $\epsilon>0$;
	\item \label{cond:block-boundary-func} For each boundary component $C\subset\partial M$ and with the corresponding $U,\phi_C,\epsilon$ as above, we have that the function $f\circ\phi_C^{-1}:\left[0,\epsilon\right)\times\sph^1\to\RR$ is either
	\begin{enumerate}
		\item $f\circ\phi_C^{-1}(x,y) = \sin x$, or
		\item $f\circ\phi_C^{-1}(x,y) = \cos x$.
	\end{enumerate}
	In the first case we will say that $C$ is a \emph{Dirichlet-type} boundary component, and in the second case we will say that $C$ is a \emph{Neumann-type} boundary component.
\end{enumerate}
\end{definition}
The following types of blocks will be called \emph{simple}:
\begin{enumerate}
	\item $M$ is diffeomorphic to $\DD$, the disk in $\RR^2$, with the only boundary component of $\partial M$ being a Dirichlet-type
	\item $M$ is diffeomorphic to $\left[0,1\right]\times\sph^1$, with the boundary component $\left\{0\right\}\times\sph^1$ being Dirichlet-type and the boundary component $\left\{1\right\}\times\sph^1$ being Neumann-type
	\item $M$ is diffeomorphic to a pair-of-pants, with one of the boundary components being of Dirichlet-type and the other two being of Neumann-type.
\end{enumerate}
\begin{lemma}\label{lemma:blocks}
	Each simple block type as above can be realized as a block (as defined in definition \ref{def:block}).
\end{lemma}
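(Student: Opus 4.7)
The plan is to build each simple block type by specifying, on an appropriate model surface $M$, a triple $(f, u, \mu)$ of smooth function, smooth vector field and smooth measure, and then applying Lemma~\ref{lemma:pointwise} to obtain the metric. Given the identities $\nabla_g f = u$ and $\Delta_g f = \nabla\cdot u$ (the latter computed with respect to $\mu$), condition~\ref{cond:block-eigenfunc} reduces to the scalar equation $\nabla\cdot u = -f$. Near each boundary component I will prescribe the natural collar data: in coordinates $(x,y)\in[0,\epsilon)\times\sph^1$, take $f = \sin x$, $u = \cos x\,\partial_x$, $\mu = dx\wedge dy$ in the Dirichlet case, and $f = \cos x$, $u = -\sin x\,\partial_x$, $\mu = dx\wedge dy$ in the Neumann case. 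In both cases a direct computation gives $\nabla\cdot u = -f$, and the metric produced by Lemma~\ref{lemma:pointwise} in the collar is the flat product metric $dx^2 + dy^2$, which is exactly what conditions~\ref{cond:block-boundary-isometry} and~\ref{cond:block-boundary-func} demand. It remains to extend $(f, u, \mu)$ smoothly into the interior of $M$ while keeping $f$ of constant sign and $\langle df, u\rangle > 0$ away from the critical set of $f$.

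Type~2 is immediate: take $M = [0,\pi/2]\times\sph^1$ with the flat metric and $f(r,y) = \sin r$; then $-\Delta_g f = f$, $f > 0$ on the interior, $f = \sin r$ near $r = 0$ (Dirichlet), and $f = \cos(\pi/2 - r)$ near $r = \pi/2$ (Neumann). For Type~1 I use a rotation-invariant warped product: model $M$ as $[0, R]\times\sph^1$ with $\{R\}\times\sph^1$ collapsed to a pole, metric $g = dr^2 + h(r)^2\,dy^2$, and radial function $f(r)$. The collar condition forces $h \equiv 1$ near $r = 0$ and $f = \sin r$ there, and the eigenfunction equation becomes the ODE $f'' + (h'/h)f' + f = 0$. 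Choosing $f$ first as a smooth positive profile on $[0,R]$ with $f(0)=0$, $f'(0)=1$, $f(R) > 0$, $f'(R) = 0$, and the closure condition $f''(R) = -f(R)/2$ (needed so that $h$ vanishes to first order at the pole), one reads off $h$ via $h'/h = -(f''+f)/f'$; this yields a smooth $h$ on $[0,R)$ with $h(r) \sim c(R-r)$ near the pole for some $c > 0$, and adjusting one scalar parameter in the interior profile of $f$ gives $c = 1$, so that the metric extends smoothly across the pole.

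Type~3 is the interesting case. A Morse-theoretic argument on a pair of pants $M$ (with $f > 0$ in the interior, $f = 0$ on the Dirichlet component, and $f$ attaining a local maximum on each Neumann component) shows that $f$ must have at least one interior critical point, which is a saddle. Near such a saddle the flat metric cannot be used; instead, in coordinates $(x, y)$ in which $f = c + ax^2 + by^2$ (with $c > 0$, $ab < 0$, and $a + b = -c/2$ forced by $-\Delta_g f = f$ at the critical point with $g(0) = I$), I use the conformal ansatz $g = e^{2\phi}(dx^2 + dy^2)$, which reduces the eigenfunction equation algebraically to $e^{-2\phi} = 1 + (ax^2 + by^2)/c$, a smooth positive function in a neighborhood of the origin. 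To complete the block, choose a $\mathbb{Z}/2$-symmetric pair of pants with an involution $\sigma$ swapping the two Neumann components, and build $\sigma$-invariant data $(f, u, \mu)$ agreeing with the flat collar data near each boundary component and with the conformal saddle model near the fixed saddle point. On the open region interpolating between these three local pieces, the flow lines of $u$ foliate the domain away from the saddle, and the equation $\nabla\cdot u = -f$ becomes a first-order transport ODE for the log-density of $\mu$ along each flow line, uniquely soluble from initial data on a transversal (e.g.\ a level set of $f$). The main obstacle will be to choose the interpolating profile of $f$ so that the measure transported from the boundary matches, near the saddle, the conformal measure prescribed there; this compatibility amounts to a finite number of scalar integral conditions, which should be satisfiable using the infinite-dimensional freedom in the interpolating profile.
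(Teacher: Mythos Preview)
Your explicit construction for Type~2 (the flat half-cylinder $[0,\pi/2]\times\sph^1$ with $f=\sin r$) is correct and simpler than the paper's route. For Type~1 the warped-product ODE approach is sound in outline, though you do not actually verify that the cone-angle normalisation $c=1$ is attainable; an intermediate-value argument across a one-parameter family of profiles would close this, but it is not given.

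The substantive gap is Type~3. Your local models (flat collars, conformal saddle patch) are fine, but the interpolation is only sketched. The transport-ODE framing is underspecified --- you never say how $u$ is chosen in the interpolating region --- and it mislocates the obstruction: generic flow lines of $u$ run from the Dirichlet circle to one of the two Neumann circles, touching the saddle only along two separatrices, so the matching of $\mu$ is required along the entire Neumann collars, not merely ``near the saddle''. More fundamentally, even granting that the divergence-theorem obstructions to $\nabla_\mu\cdot u=-f$ reduce to finitely many scalars, you still need a solution $u$ with $\langle df,u\rangle>0$ pointwise, and nothing in your argument addresses that positivity.

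The paper proceeds uniformly for all three types: fix $f$, prescribe the metric only on collars of $\partial M$ and of the critical point, choose a compatible measure $\mu$ satisfying an \emph{admissibility} inequality (one inequality \eqref{eq:downset-inequality} for every ``approximate down-set''), and then invoke Theorem~\ref{thm:interpolation-metric} to extend the metric keeping $f$ an eigenfunction. The admissibility verification (Lemma~\ref{lemma:interpolation-admissibility}) is where your anticipated integral conditions actually arise, and the proof of Theorem~\ref{thm:interpolation-metric} --- occupying two sections and passing through a non-Hausdorff ``blueprint'' quotient of $M$ by level sets of $f$ --- is exactly the positivity argument you are missing. Your shortcuts for Types~1 and~2 genuinely simplify those cases; for Type~3 you have rediscovered the need for the interpolation theorem without proving it.
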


\subsection{Beginning of the proof of lemma \ref{lemma:blocks}} \label{section:blocks-proof-start}

\begin{figure}
	\centering
	\begin{tikzpicture}[x=0.75pt,y=0.75pt,yscale=-1,xscale=1]
		\node[inner sep=0pt] (imgafter) at (450,90) {\includegraphics[width=5cm]{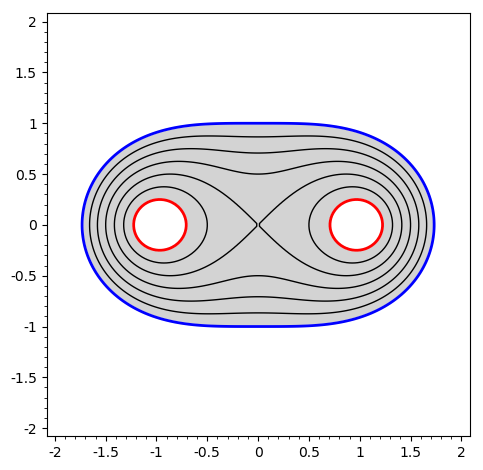}};
	\end{tikzpicture}
	\caption{The region $M$ (in gray) used to build the third simple block type, and level lines of the function $f_0$. The blue line is the level line $f_0=0$ (which will become the Dirichlet-type boundary component); the red lines are the level lines $f_0=1$ (which will become the Neumann-type boundary components).}
	\label{fig:third-simple-block-type}
\end{figure}

\begin{proof}[lemma \ref{lemma:blocks}]

	Choose a surface $M$ and an auxiliary function $f_0:M\to\left[0,1\right]$ according to the required simple block type as follows:
	\begin{itemize}
		\item For the first simple block type, let $M$ be the closed hemisphere $\left\{(x,y,z)\in\sph^2: z\geq 0\right\}$ and let $f_0:M\to\left[0,1\right]$ given by $f_0(x,y,z) = \frac z2$.
		\item For the second simple block type, let $M$ be the closed cylinder $\left[0,1\right]\times\sph^1$ and let ${f_0:\left[0,1\right]\times\sph^1\to\left[0,1\right]}$ be the projection map.
		\item For the third simple block type, let $M$ be the closed region (see figure \ref{fig:third-simple-block-type})
		\[
		M = \left\{z\in\CC:\left|z^2-1\right|\in\left[\frac12, 2\right]\right\}
		,
		\]
		and let $f_0:M\to\left[0,1\right]$ be
		\[
		f_0(z) = \frac{2 - \left|z^2-1\right|}{2 - \frac12}
		\,.
		\]
	\end{itemize}
	In each of the three cases, we set $f = 1-\left(1-f_0\right)^2$. Then the pair $(M, f)$ is diffeomorphic to the requested simple block.
	Note the following, regarding the critical points of $f_0$:
	\begin{itemize}
		\item In the first case, $f_0$ has a single critical point, which is a maximum point and belongs to the interior of $M$;
		\item In the second case, $f_0$ has no critical point;
		\item In the third case, $f_0$ has a single critical point, which is a saddle point and belongs to the interior of $M$.
	\end{itemize}
	Therefore $f_0$ has at most one critical point in $M$, and that critical point (if exists) is in the interior of $M$. The boundary components of $M$ that correspond to the Dirichlet-type boundary components are those with $f_0=0$, and the boundary components that correspond to the Neumann-type boundary components are those with $f_0=1$.
	
	On a neighborhood of each boundary component of $M$ we will define a metric. Let $C\subset\partial M$ be a boundary component and let $\epsilon>0$ be small enough.
	Let $Y = \left[0,\pi\right]\times\sph^1$ be a cylinder with a standard metric and let $f_Y:Y\to\left[0,1\right]$ given by $f_Y = \sin x$ where $x\in\left[0,\pi\right]$ is the first coordinate. Then $f_Y$ is an eigenfunction of the minus Laplacian on $Y$ (given by $-\Delta_Y h(x,y) = -\partial_x^2 h -\partial_y^2 h $ where $y\in\sph^1$ is considered to be an element of $\RR/2\pi\mathbb{Z}$) with its usual metric (given by $ds^2=dx^2+dy^2$), with eigenvalue $\lambda=1$. Let $U_Y\subset Y$ be a small neighborhood of one of the boundary components of $Y$:
	\begin{itemize}
		\item If $C$ is a Dirichlet-type boundary component, then let $U_Y=f_Y^{-1}(\left[0,\epsilon\right))$
		\item If $C$ is a Neumann-type boundary component, then let $U_Y = f_Y^{-1}(\left(1-\epsilon, 1\right])$
	\end{itemize}
	Then there is a diffeomorphism $\phi_C$ from $U_Y$ to a small neighborhood $U_C$ of $C$ which pullbacks $\left.f\right|_{U_C}$ to $f_Y$. On that neighborhood of $C$ define a metric $g_C$ to be the pushforward of the regular metric on $Y$ along $\phi_C$.
	By construction, the resulting metric satisfies the second condition and the third condition of definition \ref{def:block}, and $\left.f\right|_{U_C}$ is an eigenfunction of the minus Laplacian with eigenvalue $\lambda=1$.
	
	Note that when $f_0$ has a critical point $x_0$ on $M$, then the Hessian matrix of $f$ with respect to some local coordinates near $x_0$ is either negative-definite (in the case of the first simple block type) or indefinite (in the case of the third block type).
	Therefore in those cases, we may define a metric on a small neighborhood $U_{x_0}$ of $x_0$ by first choosing a metric $g_0$ such that $-\Delta_{g_0}f >0$ on $U_{x_0}$ and then scaling $g_0$ by a positive scalar function to get a metric $g$ for which $-\Delta_g f = f $.
	
	In the following, by the notation $\left\{x_0\right\}$ we mean the set $\left\{x_0\right\}$ in the cases where $x_0$ exists (i.e. the first and third block types), and $\emptyset$ when $x_0$ does not exist (i.e. in the second block type).
	
	Let $U\subset M$ be an open neighborhood of $\partial M\cup\left\{x_0\right\}$ where we have already chosen a metric $g_U$ by the above construction.
	Note that, given a number $c>0$, we can replace $U$ by a smaller neighborhood of $\partial M\cup\left\{x_0\right\}$ such that there is a smooth measure on $M$ which is compatible on $U$ with $g_U$ and satisfies
	\[
	\int_M fd\mu = c
	.
	\]
	Applying this with the number
	\[
	c =  - \int_{\partial M}\left(\nabla_{g_U} f\right)\ddn > 0
	,
	\]
	it follows that we can assume the existence of a smooth measure $\mu$ which is compatible on $U$ with $g_U$ and satisfies
		\begin{equation}\label{eq:block-full-integral}
			 \int_M fd\mu = -  \int_{\partial M}\left(\nabla_{g_U} f\right)\ddn
			 .
			\end{equation}
	
	It remains to extend the Riemannian metric from $U$ onto $M$ so that $f$ continues to be the eigenfunction of $-\Delta_g$. This is the most technical part of our argument. At this point we make a break in the proof of lemma \ref{lemma:blocks} to formulate theorem \ref{thm:interpolation-metric}, which provides us with a tool needed for such an extension. Then we complete the proof of lemma \ref{lemma:blocks}. Theorem \ref{thm:interpolation-metric} will be proved in section \ref{section:interpolation-preparations}.
\end{proof}
\subsection{Interpolation of the Riemannian metric}
\label{section:interpolation-formulation}
Let $M$ be a smooth orientable surface, $U\subset M$ an open subset equipped with a Riemannian metric $g_U$ and with $M\setminus U$ compact, and $f:M\to\RR$ be a smooth function without critical points. Say that a subset $R\subset M$ is an $(U,f)$-\emph{approximate down set} if it satisfies the following conditions:
\begin{itemize}
	\item $\overline R$ is compact,
	\item $\partial R$ is a finite disjoint union of piecewise smooth simple curves, and
	\item in the neighborhood of any point $p\in \partial R \setminus U$ the set $R$ coincides with $f^{-1}\left(\left(-\infty,f(p)\right)\right)$ ; in particular, $f$ is locally constant on $\partial R\setminus U$ (however not necessarily on $\partial R$).
\end{itemize}
Say that $\mu$ is $(g_U, f, \lambda)$-\emph{admissible} (or just \emph{admissible} if $g_U,f,\lambda$ are obvious in context) if, for every $(U,f)$-approximate down-set $R\subset M$ we have the inequality
\begin{equation}\label{eq:downset-inequality}
	-\int_R \lambda f d\mu \geq \int_{\partial R\cap U} \left(\nabla_{g_U} f\right) \ddn
\end{equation}
with equality iff $\partial R\setminus U$ has $1$-dimensional Hausdorff measure $0$.
We will prove the following theorem.
\begin{theorem} \label{thm:interpolation-metric}
	Let $M$ be a smooth orientable surface with a smooth measure $\mu$ (as defined in section \ref{section:notation}), and $f : M\to\RR$ a smooth function without critical points. Suppose that there is an open set $U\subset M$ and an Riemannian metric $g_U$ on $U$ compatible with $\mu$ such that $M\setminus U$ is compact and $\left.f\right|_U$ is a Laplacian eigenfunction of $g_U$ with eigenvalue $-\lambda$.
	
	Additionally, suppose that $\mu$ is $(g_U, f, \lambda)$-admissible.
	
	Then there is a metric on $M$, equal to $g_U$ on some $U'\subset U$ with $M\setminus U'$ compact, and compatible with $\mu$, for which $f$ is a Laplacian eigenfunction with eigenvalue $-\lambda$.
\end{theorem}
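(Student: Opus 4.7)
The plan is to convert the problem into one about vector fields via Lemma \ref{lemma:pointwise}. Concretely, it suffices to construct a smooth vector field $u$ on $M$ with (i) $u = \nabla_{g_U} f$ on some open $U'\subset U$ such that $M\setminus U'$ is still compact, (ii) $\langle df,u\rangle>0$ pointwise, and (iii) $\nabla\cdot u = -\lambda f$ with respect to $\mu$. Indeed, given such a $u$, Lemma \ref{lemma:pointwise} applied with $\omega=df$ (the hypothesis $\langle \omega,u\rangle>0$ being exactly (ii)) produces a unique smooth metric $g$ compatible with $\mu$ such that $\nabla_g f = u$; then $\Delta_g f = \nabla\cdot\nabla_g f = \nabla\cdot u = -\lambda f$, so $f$ is the desired eigenfunction, and by uniqueness $g = g_U$ on $U'$.

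Because $f$ has no critical points, the level curves of $f$ foliate $M$ by smooth $1$-submanifolds. On the compact set $M\setminus U'$ I would work in local coordinates $(s,t)$ adapted to this foliation with $s=f$ and $t$ an arc-length coordinate along level curves (with respect to some auxiliary Riemannian metric); then $d\mu = \rho(s,t)\,ds\wedge dt$, condition (ii) reads $u^s>0$, and the divergence equation becomes
\[
\partial_s(\rho\,u^s) + \partial_t(\rho\,u^t) = -\lambda s\,\rho.
\]
Choose first a smooth tangential component $u^t$ extending the tangential part of $\nabla_{g_U}f|_{\partial U'}$ into $M\setminus U'$. The displayed equation then becomes an ODE in $s$ that, together with the boundary datum $u^s|_{\partial U'}$, determines $u^s$ uniquely. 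Integrating the divergence theorem over a subregion $R_c$ bounded by an arc of $\partial U'$ and a piece of a level curve $\{f=c\}$ shows that the flux of $u$ across that piece of the level curve equals
\[
-\lambda\int_{R_c}f\,d\mu - \int_{\partial R_c\cap U}(\nabla_{g_U}f)\cdot d\hat{\mathbf{n}},
\]
which is precisely the right-hand side of the admissibility inequality for the approximate down-set obtained by attaching $R_c$ to the interior of $U'$. Applying admissibility to arbitrarily thin strips $R_c$ forces the flux density of $u$ to be positive at every point of every level curve, and hence $u^s>0$ pointwise.

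The main obstacle is producing a globally smooth $u$: the coordinates $(s,t)$ are only local because $\partial U'$ need not be transverse to the foliation (tangencies occur whenever a level curve meets $\partial U'$ at an extremum of $f|_{\partial U'}$), and because a level curve of $f|_{M\setminus U'}$ may be disconnected, open, or closed. I would first shrink $U$ so that $\partial U'$ is smooth and has only finitely many non-degenerate tangencies with the foliation, then cut $M\setminus U'$ along finitely many smooth arcs issuing transversely from the tangency points, producing finitely many simply-connected pieces on each of which $(s,t)$ are global coordinates. On each piece the construction above applies, using as boundary data for $u^s$ the given $\nabla_{g_U}f$ on $\partial U'$ and freely chosen values on the interior cutting arcs. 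The technical heart is to coordinate these free choices so that the pieces glue to a $C^\infty$ field: admissibility holds separately on each piece (restrictions of approximate down-sets remain approximate down-sets for the restricted $(U,f)$-data), and the freedom in $u^t$ allows matching both values and first derivatives across each cut. This yields a globally smooth $u$, to which Lemma \ref{lemma:pointwise} is finally applied to obtain the metric $g$ asserted by the theorem.
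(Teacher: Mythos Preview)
Your reduction to constructing a vector field $u$ and then invoking Lemma \ref{lemma:pointwise} is exactly right and matches the paper's strategy. The gap is in the positivity step.

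You fix $u^t$ first and solve the ODE in $s$ for $u^s$, arguing that admissibility on ``arbitrarily thin strips $R_c$'' forces $u^s>0$ pointwise. But an approximate down-set must, outside $U$, coincide locally with $\{f<c\}$; in particular $\partial R\setminus U$ consists of \emph{entire} connected arcs of level curves in $M\setminus U$, never short sub-arcs. You cannot shrink $R_c$ in the $t$-direction without introducing boundary pieces transverse to the foliation that lie outside $U$, and such $R_c$ are no longer approximate down-sets. Hence admissibility only tells you that $\int_\gamma u^s\,\rho\,dt>0$ for each full level arc $\gamma\subset M\setminus U$, not that $u^s>0$ at every point of $\gamma$. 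For a generic choice of $u^t$ the resulting $u^s$ can and typically will change sign, since the ODE picks up the term $-\partial_t(\rho u^t)$ with no sign control.

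The paper resolves this by reversing the order: one first builds the normal density $h:=\partial_u f>0$ directly, and only afterwards solves for the tangential component along each level line (Claim \ref{claim:sideway-integration}). Admissibility prescribes the integral of $h$ over each level arc; turning those positive integrals into a smooth \emph{pointwise} positive density on $M\setminus U$ is the real work. This is done by passing to the quotient of $M\setminus\overline{U_1}$ by level sets --- a one-dimensional non-Hausdorff manifold, the ``blueprint'' --- and proving two lemmas there: one producing a positive function on the blueprint with the prescribed boundary sums (Lemma \ref{lemma:resolving-homology}), and a partition-of-unity lemma (Lemma \ref{lemma:function-decomposition}) lifting it back to a positive smooth $h$ on $M$. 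Your cutting/gluing sketch also underestimates this difficulty: matching $C^\infty$ data (not just $C^1$) across the cuts while simultaneously keeping $u^s>0$ is precisely where the blueprint machinery earns its keep.
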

Note that the assumption that $\mu$ is $(g_U, f, \lambda)$-admissible is also necessary for the conclusion of theorem \ref{thm:interpolation-metric}. Indeed, if there is such a metric $g$ on $M$ and $R\subset M$ is a $(U,f)$-approximate down-set then we have
\begin{multline*}
-\int_R \lambda f d\mu
= \int_R \Delta_g f
= \int_{\partial R} \left(\nabla_g f\right)\ddn
=\\=
 \int_{\partial R\cap U} \left(\nabla_g f\right)\ddn
 +
  \int_{\partial R\setminus U} \left(\nabla_g f\right)\ddn
  \geq
  \int_{\partial R\cap U} \left(\nabla_g f\right)\ddn
  ,
\end{multline*}
where the last inequality follows from $R$ being approximate down-set, and it is an equality iff $\partial R\setminus U$ has $1$-dimensional Hausdorff measure $0$.

\subsection{End of the proof of lemma \ref{lemma:blocks}}

	Let $M, f, \mu, U, x_0$ be as in section \ref{section:blocks-proof-start}.
	
\begin{lemma}
	\label{lemma:interpolation-admissibility}
	For each $M,f, U, g_U$ as in section \ref{section:blocks-proof-start} with a smooth measure $\mu$ satisfying \eqref{eq:block-full-integral}, we can replace $U$ with a smaller neighborhood of $\partial M\cup\left\{x_0\right\}$ so that $\mu$ is $(g_U, f, 1)$-admissible.
\end{lemma}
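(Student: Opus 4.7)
The plan is to reformulate admissibility as a flux inequality on $\partial U$ via the divergence theorem, and then exploit the absence of critical points of $f$ on $V := M \setminus U$. Applying the divergence theorem to $R \cap U$ and using $\Delta_{g_U} f = -f$ yields
\[
-\int_{R\cap U} f\, d\mu \;=\; \int_{\partial R \cap U} \nabla_{g_U} f \ddn \;+\; \int_{\partial U \cap \overline{R\cap U}} \nabla_{g_U} f \ddn,
\]
where the first normal is outward from $R$ and the second is outward from $U$ (into $V$). Substituting into \eqref{eq:downset-inequality} at $\lambda = 1$ and cancelling the $\int_{R \cap U} f\, d\mu$ terms reduces admissibility to the equivalent inequality
\[
\int_{\partial U \cap \overline{R\cap U}} \nabla_{g_U} f \ddn \;\geq\; \int_{R\cap V} f\, d\mu,
\]
with the same equality condition.

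Next, I would pin down the structure of $R$. Since $f$ has no critical points on $V$, level sets of $f$ in $V$ are smooth $1$-manifolds, and the local sublevel condition on $\partial R \setminus U$ prevents $R \cap V$ from having a bounded ``upper'' boundary inside $V$; so in each component $K$ of $V$, $R \cap K$ is a union of sublevel regions reaching down to pieces of $\partial K \subset \partial U$. Whenever $\partial R$ crosses a single-level component $\partial U_i^{\mathrm{in}}$ of $\partial U$, $R$ must lie on the $f$-lower side, which forces $U_i \subset R$ for Dirichlet-type $U_i$ and $U_i \cap R = \emptyset$ for Neumann- or maximum-type $U_i$.

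To verify the inequality, I use a mass-budget identity. For each component $U_i$ of $U$, the divergence theorem gives
\[
\mathrm{Flux}_i \;:=\; \int_{\partial U_i^{\mathrm{in}}} \nabla_{g_U} f \ddn \;=\; -\int_{U_i} f\, d\mu \;-\; \int_{\partial M \cap \overline{U_i}} \nabla_{g_U} f \ddn,
\]
and summing and using \eqref{eq:block-full-integral} yields $\sum_i \mathrm{Flux}_i = \int_V f\, d\mu$. Only the Dirichlet fluxes are positive; the rest are non-positive. Rewriting the reformulated inequality as $\int_{V \setminus R} f\, d\mu \geq \sum_{U_i \not\subset R} \mathrm{Flux}_i$, the sign rule above pairs every positive (Dirichlet) flux on the right with $R \cap V$ not extending to that $\partial U_i^{\mathrm{in}}$, so the left absorbs it, while the remaining non-positive fluxes make the inequality immediate. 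The equality clause follows from $f > 0$ in the interior of $M$ combined with a case analysis on the finitely many allowed sign patterns.

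The main obstacle is the saddle in block 3, where $\partial U_{x_0}$ cannot be a single level set of $f$ and so the sign rule above does not apply uniformly along $\partial U_{x_0}$. I handle this using the lemma's freedom to shrink $U$: taking $U_{x_0}$ small enough makes both $\int_{U_{x_0}} f\, d\mu$ and $|\int_{\partial U_{x_0}} \nabla_{g_U} f \ddn|$ arbitrarily small, after which a quadrant-by-quadrant version of the sign rule---using the four arcs into which $\{f = f(x_0)\}$ cuts $\partial U_{x_0}$---closes the argument, the error terms being absorbed by this slack.
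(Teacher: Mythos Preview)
Your approach is correct and is essentially the same as the paper's, repackaged through an explicit divergence-theorem step. The paper defines
\[
S(R)=\int_R f\,d\mu+\int_{\partial R\cap U}\nabla_{g_U}f\ddn
\]
and then uses additivity together with the observation $S(K)=0$ for $\overline K\subset U$ to reduce to the case $R\cap U_i\in\{\emptyset,U_i\}$, after which it runs a block-by-block case analysis using \eqref{eq:block-full-integral}, and for block~3 shrinks $U_{x_0}$ so that the flux through $\partial R\cap U_{x_0}$ is dominated by the mass of an omitted superlevel component. Your divergence computation on $R\cap U$ is exactly what makes $S(K)=0$ for $K\subset U$ transparent, and your ``sign rule'' plays the role of the paper's reduction to $R\cap U_i\in\{\emptyset,U_i\}$; the mass-budget identity $\sum_i\mathrm{Flux}_i=\int_V f\,d\mu$ is the same use of \eqref{eq:block-full-integral}.

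Two small imprecisions worth tightening. First, after your reformulation the inequality depends only on $R\cap V$, so the conclusion of the sign rule should be stated as $\partial U_i^{\mathrm{in}}\subset\overline R$ (respectively $\partial U_i^{\mathrm{in}}\cap\overline R=\emptyset$), not $U_i\subset R$; indeed $R\cap U_i$ is completely unconstrained and irrelevant once you have passed to the $V$-side inequality. Second, in block~3 the level sets of $f$ inside $V$ are cut by $U_{x_0}$, so ``$R\cap V$ is a sublevel region'' must be read per connected component and may leave arcs of $\partial U_{x_0}$ only partially covered by $R$; this is precisely why both you and the paper need the shrinking argument, and why your ``quadrant-by-quadrant'' remark is the right fix. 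With those clarifications your argument matches the paper's.
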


\begin{proof}[End of proof of lemma \ref{lemma:blocks}]
Using lemma \ref{lemma:interpolation-admissibility} we can shrink $U$ further so that $\mu$ becomes $(g_U, f, 1)$-admissible. This allows us to apply theorem \ref{thm:interpolation-metric} to the surface $M_0 = M\setminus\left(\partial M\cup\left\{x_0\right\}\right)$ with the function $\left.f\right|_{M_0}$, the open set $U\cap M_0\subset M_0$, in order to get an extension $g$ of $g_U$ to the whole $M$. Then $M$ with the metric $g$ and the function $f$ is the needed building block.
\end{proof}

\begin{proof}[lemma \ref{lemma:interpolation-admissibility}]
	First we may assume that each connected component of $U$ intersects (and also contains) exactly one connected component of $\partial M\cup\left\{x_0\right\}$.
	
	For a piecewise smooth region $R\subset M$, denote
	\[
	S(R) = \int_R   fd\mu + \int_{\partial R\cap U} \left(\nabla_{g_U} f\right)\ddn
	\]
	We need to show how to make $U$ smaller so that any $(U,f)$-approximate down-set $R\subset M$ satisfies $S(R)\leq 0$, with equality if and only if $\partial R\setminus U$ has $1$-dimensional Hausdorff measure $0$.
	
	Note that $S$ is additive (in the sense that $R_1\cap R_2=\emptyset \implies S(R_1)+S(R_2)=S\left(R_1\cup R_2\right)$), and that if $\overline{R}\subset U$ then $S(R)=0$.
	
	In the case of the first simple block type, $\partial M\cup\left\{x_0\right\}$ consists of $\partial M$ which is a single Dirichlet-type boundary component, and a maximum point $x_0$ of $f$ with $f(x_0)=\frac 12$. Let $U_D, U_0$ be the connected components of $U$ that correspond to $\partial M$, $x_0$ respectively. Note that we may assume that $U_D = f^{-1}\left(\left[0,\epsilon_1\right)\right)$ and $U_0 = f^{-1}\left(\left(\frac12-\epsilon_2,\frac12\right]\right)$ for some small $0<\epsilon_1,\epsilon_2$. This implies that for any $(U,f)$-approximate down-set $R\subset M$, any connected component of $\partial M$, which intersects one of $U_D,U_0$, is contained in it. Therefore, to show that $\mu$ is $(g_U,f,1)$-admissible, it is enough to consider $(U,f)$-approximate down-set $R\subset M$ such that $R\cap U_D\in\left\{\emptyset,U_D\right\}$ (if there is a connected component $K$ of $R$ which is a subset of $U_D$ we may replace $R$ with $R\setminus K$; if there is a connected component $K$ of $M\setminus R$ which is a subset of $U_D$ we may replace $R$ with $R\cup K$) and similarly $R\cap U_0\in\left\{\emptyset,U_0\right\}$. Let us verify that $\mu$ is $(g_U,f,1)$-admissible in this case:
	\begin{itemize}
		\item If $U_0\subset R$ then we must have $R=M$ and we get (using \eqref{eq:block-full-integral}):
		\[
		S(R) = S(M) = \int_M   fd\mu + \int_{\partial M} \left(\nabla_{g_U} f\right)\ddn = 0
		\]
		\item If $U_0\cap R=\emptyset$ and $\partial R\not\subset U$ we must have $R=f^{-1}(\left[0,a\right))$ for some $0< a <\frac12$, so $\partial R\cap U=\partial M$ and we get
		\[
		S(R) = \int_R  fd\mu + \int_{\partial M} \left(\nabla_{g_U} f\right)\ddn
		=
		\int_R  fd\mu - \int_M   fd\mu < 0
		\]
		\item If $U_0\cap R=\emptyset$ and $\partial R\subset U$ we must have $R\subset U_D$ and we get $S(R)=0$.
	\end{itemize}
	
	The case of the second simple block type is similar: $\partial M\cup\left\{x_0\right\}$ consists of a Dirichlet-type boundary component $C_D\subset \partial M$ and a Neumann-type boundary component $C_N\subset\partial M$, so $U$ has two connected components $U_D\supset C_D$ and $U_N\supset C_N$. We may assume that $U_D = f^{-1}\left(\left[0,\epsilon_1\right)\right)$ and $U_N = f^{-1}\left(\left(1-\epsilon_2,1\right]\right)$ for some $0<\epsilon_1,\epsilon_2$. Therefore we get that any boundary component of a $(U,f)$-approximate down-set is either a subset of $U_D$ or disjoint to $U_D$, and similarly with $U_N$. Therefore it is enough to consider $(U,f)$-approximate down-sets $R\subset M$ such that $R\cap U_D\in\left\{\emptyset,U_D\right\}$ and $R\cap U_N\in\left\{\emptyset,U_N\right\}$. Let us verify that $\mu$ is $(g_U,f,1)$-admissible in this case:
	\begin{itemize}
		\item If $U_N\subset R$ then we must have $R=M$ and we get (using \eqref{eq:block-full-integral}):
		\[
		S(R) = S(M) = \int_M   fd\mu + \int_{\partial M} \left(\nabla_{g_U} f\right)\ddn = 0
		\]
		\item If $U_N\cap R=\emptyset$ and $\partial R\not\subset U$ we must have $R=f^{-1}(\left[0,a\right))$ for some $\epsilon_1\leq a \leq 1-\epsilon_2$ and we get
		\begin{multline*}
		S(R) = \int_R  fd\mu + \int_{C_D} \left(\nabla_{g_U} f\right)\ddn
		=\\=
		\int_R  fd\mu + \int_{\partial M} \left(\nabla_{g_U} f\right)\ddn
		=
		\int_R  fd\mu - \int_M   fd\mu < 0
		\end{multline*}
		\item If $U_N\cap R=\emptyset$ and $\partial R\subset U$ we must have $R\subset U_D$ and we get $S(R)=0$.
	\end{itemize}
	
	In the case of the third simple block type, $\partial M\cup \left\{x_0\right\}$ consists of one Dirichlet-type boundary component $C_D$, two Neumann-type boundary components $C_{N,1}, C_{N,2}$, and a saddle point $x_0$. Let $U_D\supset C_D$, $U_{N,i}\supset C_{N,i}$, $U_0\ni x_0$ be the corresponding connected components of $U$. As before we may assume that $U_D = f^{-1}\left(\left[0,\epsilon_1\right)\right)$ and $U_{N,0}\cup U_{N,1} = f^{-1}\left(\left(1-\epsilon_2,1\right]\right)$ for some $0<\epsilon_1,\epsilon_2$. We also may assume that $U_0$ is contractible. As above it is enough to consider $(U,f)$-approximate down-sets $R\subset M$ such that $R\cap U_D\in\left\{\emptyset, U_D\right\}$ and $R\cap U_{N,i}\in\left\{\emptyset, U_{N,i}\right\}$ for $i=1,2$, and also that there is no boundary component of $R$ which is a subset of $U_0$. Additionally, note that
	\begin{multline}\label{eq:path-estimation}
	\left|\int_{\partial R\cap U_0} \left(\nabla_{g_U} f\right)\ddn\right|
	=
	\left|
	-\int_{R\cap U_0}  fd\mu - \int_{\partial\left(R\cap U_0\right)\setminus\partial R}\left(\nabla_{g_U} f\right)\ddn
	\right|
	\leq \\ \leq
	 \mu(U_0) \sup_{U_0} \left| f\right|  + L_{g_U}\left(\partial U_0\right) \sup_{U_0} \left\|d f\right\|_{g_U}
	\end{multline}
	where $L_{g_U}\left(\partial U_0\right)$ is the perimeter of $U_0$ (measured using the metric $g_U$). By replacing $U_0$ by a smaller neighborhood of $x_0$ we can make the RHS of \eqref{eq:path-estimation} to be as close to $0$ as necessary.
	On the other hand, denoting $b(U_0) := \sup_{U_0} f<1-\epsilon_2$, the value
	\[
	\min\left\{\int_{W}   fd\mu : W\text{ is a connected component of } f^{-1}\left(\left(b(U_0),1\right]\right)\right\}
	\]
	can only increase when making $U_0$ smaller. Therefore we may assume that for any $R\subset M$ and for any connected component $W$ of $f^{-1}\left(\left(b(U_0),1\right]\right)$ we have
	\[
	\left|\int_{\partial R\cap U_0} \left(\nabla_{g_U} f\right)\ddn\right|
	<
	\int_W   fd\mu
	.
	\]
	\begin{itemize}
		\item If $\partial R\cap U_0\ne\emptyset$ then $R$ must be disjoint from some connected component $W$ of $f^{-1}\left(\left(b(U_0),1\right]\right)$, and also we must have $U_D\subset R$, therefore we get
		\begin{multline*}
		S(R) = \int_R   fd\mu + \int_{\partial R\cap U}\left(\nabla_{g_U} f\right)\ddn
		\leq \\ \leq
		\int_M   fd\mu - \int_W   fd\mu
		+ \int_{\partial R\cap U_0}\left(\nabla_{g_U} f\right)\ddn
		+ \int_{C_D}\left(\nabla_{g_U} f\right)\ddn
		=\\=
		\underbrace{
		\left(
		\int_M   fd\mu
		+ \int_{C_D}\left(\nabla_{g_U} f\right)\ddn
		\right)
	}_{=0\text{, as }\nabla_{g_U}f=0\text{ on }C_{N,i}}
		- \int_W   fd\mu
		+ \int_{\partial R\cap U_0}\left(\nabla_{g_U} f\right)\ddn
		<0
		.
		\end{multline*}
		\item If $U_{N,1}\subset R$ and $U_{N,2}\subset R$ then we must have $R=M$ and we get
		\[
		S(R) = S(M) = \int_M   fd\mu + \int_{\partial M}\left(\nabla_{g_U} f\right)\ddn=0
		.
		\]
		\item If for some $i=1,2$ we have $U_{N,i}\cap R=\partial R\cap U_0=\emptyset$ and $U_D\subset R$ then we have
		\[
		\int_{\partial R\cap U}\left(\nabla_{g_U} f\right)\ddn =
		\int_{C_D}\left(\nabla_{g_U} f\right)\ddn =
		 \int_{\partial M}\left(\nabla_{g_U} f\right)\ddn
		 .
		\]
		Therefore
		\begin{multline*}
		S(R) = \int_R   fd\mu + \int_{\partial R\cap U}\left(\nabla_{g_U} f\right)\ddn
		<\\< \int_M   fd\mu + \int_{\partial R\cap U}\left(\nabla_{g_U} f\right)\ddn
		= \int_M   fd\mu + \int_{C_D} \left(\nabla_{g_U} f\right)\ddn = 0
		.
		\end{multline*}
		\item If $U_D\cap R = \emptyset$ then we must have $R=\emptyset$ and we get $S(R)=0$.
	\end{itemize}
\end{proof}


\section{Construction using blocks}
We will show that lemma \ref{lemma:blocks} implies theorem \ref{thm:ovals}.

Note that given two blocks with underlying Riemann surfaces $(M_i,g_i)$ and boundary components $C_i\subset\partial M_i$, the surfaces $(M_i,g_i)$ can be smoothly glued together along $C_1,C_2$ such that the correspondence between $C_1$ and $C_2$ is isometry.
Indeed, let $\epsilon>0$ be small enough such that there are neighborhoods $C_i\subset U_i\subset M_i$ with isometries $\phi_{C_i}:U_i\to\left[0,\epsilon\right)\times \sph^1$. Let $\rho:\left(-\epsilon,\epsilon\right)\times\sph^1\to\left(-\epsilon,\epsilon\right)\times\sph^1$ be the isometry given by $\rho(x,y)=(-x,y)$. Then we can glue $M_1\cup M_2\cup\left(-\epsilon,\epsilon\right)$ along the isometries
\begin{align*}
	\phi_{C_1}&:U_1\to \left[0,\epsilon\right) \\
	\rho\circ\phi_{C_2}&:U_2\to\left(-\epsilon,0\right]
\end{align*}
and the result is obviously a smooth Riemannian surface $M_3$ with boundary which contains an isometric copy of each one of $M_1, M_2$ such that their union is the whole $M_3$ and their intersection is the image of $C_1,C_2$.

\begin{lemma}\label{lemma:gluing-blocks}
	Let $(M,g)$ be a closed Riemannian surface obtained from a disjoint union
	$\bigcup_{k=1}^n M_k$
	where each $M_k$ is a copy of a simple block from lemma \ref{lemma:blocks}, by gluing (as above) pairs of Dirichlet-type boundary components together and pairs of Neumann-type boundary components together.
	
	Let $f:M\to\RR$ be the function obtained by gluing the functions $f_i:M_i\to\RR$ that correspond to $M_i$.
	
	Assume that there is a function $\left\{1,\ldots,n\right\}\to\left\{1,-1\right\}$ such that if $M_i,M_j$ have boundary components glued together then $s(i)=s(j)$ if those are Neumann-type, or $s(i)=-s(j)$ if those are Dirichlet-type.
	
	Then $sf$ is an eigenfunction of the minus Laplacian on $M$, of eigenvalue $\lambda$ and we have ${\lambda \leq \lambda_{\alpha_1 n}}$ and $\lambda\operatorname{Vol}_g\left(M\right)\in\left[\alpha_2n,\alpha_3n\right]$.
	
	(Here by $sf:M\to\RR$ we mean $s(i)f(x)$ for $x\in M_i$)
\end{lemma}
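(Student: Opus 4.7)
My plan has three steps: check that $sf$ is a smooth Laplacian eigenfunction with eigenvalue $\lambda=1$, estimate the total volume, and bound the eigenvalue index.

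First, in a tubular neighborhood of every glued curve the block conditions \ref{cond:block-boundary-isometry}--\ref{cond:block-boundary-func} give flat cylindrical coordinates $[0,\epsilon)\times\sph^1$ on each side, with metric $dx^2+dy^2$ and $f=\sin x$ (Dirichlet type) or $f=\cos x$ (Neumann type). After the gluing flip $\rho(x,y)=(-x,y)$ the two collars cover $(-\epsilon,\epsilon)\times\sph^1$. A direct check shows the sign condition on $s$ is exactly what is needed: for a Dirichlet gluing with $s(i)=-s(j)$ the expressions $s(i)\sin x$ and $s(j)\sin(-x)=-s(j)\sin x$ agree to give $sf(x,y)=s(i)\sin x$ throughout the cylinder, and for a Neumann gluing with $s(i)=s(j)$ one gets $sf(x,y)=s(i)\cos x$. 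These are smooth and satisfy $-(\partial_x^2+\partial_y^2)u=u$, so combined with condition \ref{cond:block-eigenfunc} on each block I conclude $-\Delta_g(sf)=sf$ globally; thus $\lambda=1$.

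Next, fix once and for all realizations of the three simple block types from lemma \ref{lemma:blocks} and let $v_{\min},v_{\max}>0$ be the smallest and largest of their three volumes. Since every $M_k$ is a copy of one of these, $\operatorname{Vol}_g(M)=\sum_{k=1}^n\operatorname{Vol}(M_k)\in[v_{\min}n,v_{\max}n]$, which gives $\lambda\operatorname{Vol}_g(M)\in[\alpha_2 n,\alpha_3 n]$ with $\alpha_2=v_{\min}$, $\alpha_3=v_{\max}$.

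Finally, I will bound the index of $\lambda=1$ in the spectrum of $M$ by Neumann bracketing. Cutting $M$ along all the glued curves recovers the disjoint union $\bigsqcup_k M_k$; equip each piece with Neumann boundary conditions on its entire boundary. The inclusion $H^1(M)\hookrightarrow H^1(\bigsqcup_k M_k)$ preserves Rayleigh quotients, so by the min--max principle $\lambda_j^N(\bigsqcup_k M_k)\leq\lambda_j(M)$ for every $j$. Each of the three fixed block types is a compact Riemannian manifold with smooth boundary, hence has discrete Neumann spectrum and only finitely many Neumann eigenvalues below $1$; let $C$ be the maximum such count across the three types. Then $\bigsqcup_k M_k$ has at most $Cn$ Neumann eigenvalues strictly below $1$, so $M$ has at most $Cn$ eigenvalues strictly below $1$, which gives $\lambda_{Cn}(M)\geq 1=\lambda$; taking $\alpha_1=C+1$ absorbs any indexing ambiguity. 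I expect the sign bookkeeping of the first step to be the only place requiring real care; the volume estimate is immediate from the fixed block realizations, and the Neumann bracketing step is textbook.
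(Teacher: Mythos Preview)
Your proposal is correct and follows essentially the same route as the paper: the smoothness and eigenfunction check via the collar models from conditions \ref{cond:block-boundary-isometry}--\ref{cond:block-boundary-func}, the volume bound from the fixed block volumes, and the index bound via Dirichlet--Neumann bracketing against $\bigsqcup_k M_k^\circ$ are exactly what the paper does. Your collar computation is slightly more explicit than the paper's one-line appeal to property \ref{cond:block-boundary-func}, and your handling of the indexing ambiguity (taking $\alpha_1=C+1$) is if anything more careful than the paper's.
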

\begin{proof}[lemma \ref{lemma:gluing-blocks}]
	Smoothness of $sf$ on the glued boundary components follows from property \ref{cond:block-boundary-func} of definition \ref{def:block}.
	That $sf$ is an eigenfunction of the minus Laplacian of eigenvalue $\lambda=1$ follows from property \ref{cond:block-eigenfunc} of definition \ref{def:block}.
	The bounds on $\operatorname{Vol}_g(M)$ are obvious when taking $\alpha_2, \alpha_3$ being the minimal and maximal (respectively) volume of a simple block type.
	
	To show the bound $\lambda \leq \lambda_{\alpha_1 n}$,
	let $\Omega=\bigcup_k \left(M_k^\circ\right) \subset M$ be the union of the blocks' interiors. By the Dirichlet-Neumann bracketing (see proposition 3.2.12 in \cite{SpectralGeometry}),
	we have (remember that $M$ is closed)
	\[
	\lambda^{\text{Neumann}}_k(\Omega) \leq \lambda_k(M)
	\]
	for any $k\in\mathbb{N}$, where $\lambda^{\text{Neumann}}_k(\Omega)$ is the $k$-th Neumann eigenvalue of $\Omega$.
	Rephrased using counting functions, we have for any $x\in\RR$,
	\[
	N^{\text{Neumann}}_\Omega (x) \geq N_M(x)
	\]
	where $N^{\text{Neumann}}_\Omega (x)$ (or $N_M(x)$) is the number of Neumann eigenvalues of $\Omega$ (or $M$) less than $x$.
	As $\Omega$ is the disjoint union of interiors of $n$ blocks, say the $i$-th block type occurs $n_i$ times with $n=n_1+n_2+n_3$, we have
	\[
	N^{\text{Neumann}}_\Omega(x) = 
	n_1 N^{\text{Neumann}}_{\text{1st block type}}(x)+
	n_2 N^{\text{Neumann}}_{\text{2nd block type}}(x)+
	n_3 N^{\text{Neumann}}_{\text{3rd block type}}(x)
	.
	\]
	In particular for $x=\lambda$ we get
	\[
	N^{\text{Neumann}}_\Omega(\lambda) \leq \left( \max_i N^{\text{Neumann}}_{i\text{th block type}}(\lambda)\right)\cdot n = \alpha_1 n
	,
	\]
	so $\lambda \leq \lambda_{\alpha_1 n}$.

\end{proof}

\begin{lemma}
	\label{lemma:what-to-glue}
	Let $n,k$ be nonnegative integers with $n\geq 1$, $k\leq 2$. Let $M$ be a surface with boundary, homeomorphic to a sphere with $n+k$ holes. Out of the $n+k$ boundary components of $M$, mark $n$ of them as Dirichlet-type and the other $k$ as Neumann-type. Then one can build a block $M'$ (as in definition \ref{def:block}) by gluing (as in lemma \ref{lemma:gluing-blocks}) several copies of the simple block types together along pairs of Neumann-type boundaries, such that $M'$ is homeomorphic to $M$ and this homeomorphism preserves the partition of boundary components of $M,M'$ into Dirichlet-type and Neumann-type.
\end{lemma}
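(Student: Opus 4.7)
The plan is to prove the lemma by induction on $n$, the number of Dirichlet-type boundary components, at each step gluing one simple block from lemma \ref{lemma:blocks} to a smaller block supplied by the inductive hypothesis, along a single pair of Neumann-type boundaries. For the base case $n = 1$, the three admissible values $k=0,1,2$ correspond exactly to the three simple block types (disk, cylinder, pair of pants), so I take $M'$ to be the matching simple block.

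For the inductive step $n \geq 2$, I attach a single simple block $A$ to a block $B$ of smaller complexity, gluing along one Neumann-type boundary component of each. When $k = 0$, I take $A$ to be the second simple block type (one Dirichlet, one Neumann) and $B$ to be an inductive block with $n-1$ Dirichlet and $1$ Neumann components. When $k = 1$, I take $A$ to be the third simple block (one Dirichlet, two Neumann) and $B$ to be an inductive block with $n-1$ Dirichlet and $1$ Neumann components, gluing one of $A$'s two Neumann boundaries to that of $B$. When $k = 2$, again $A$ is the third simple block, but now $B$ has $n-1$ Dirichlet and $2$ Neumann components. In all three cases, straightforward counting after the gluing yields exactly $n$ Dirichlet and $k$ Neumann boundary components, as required. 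The inductive hypothesis applies to $B$ because $n-1 \geq 1$ and the Neumann count of $B$ is at most $2$.

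Topologically, gluing two spheres-with-holes along a single boundary circle yields another sphere-with-holes whose number of holes equals the sum of the original counts minus two, with the genus remaining zero (a quick Euler characteristic computation confirms this). By induction $B$ is planar, and the simple blocks are planar, so $M'$ is planar with $n+k$ boundary components, and the classification of compact oriented surfaces with boundary gives a homeomorphism to $M$ respecting the prescribed Dirichlet/Neumann partition. Since every gluing identifies a Neumann component with a Neumann component, the sign function $s$ of lemma \ref{lemma:gluing-blocks} can be taken identically $1$, so the glued function is smooth across each seam and remains a Laplacian eigenfunction of eigenvalue $1$; the block conditions on the surviving boundary components are inherited directly from $A$ and $B$.

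The argument is essentially bookkeeping of boundary types and Euler characteristics, and I do not anticipate a substantial obstacle. The one design choice worth flagging is to glue exactly one pair of boundary circles at each step, never two pairs simultaneously, since a simultaneous gluing of two pairs of circles would create a handle and destroy planarity; the single-pair gluing is precisely what keeps the construction inside the class of sphere-with-holes surfaces throughout the induction.
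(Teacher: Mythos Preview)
Your proof is correct and follows essentially the same approach as the paper: induction on $n$, with the three simple block types serving as the base case $n=1$, and the inductive step reducing $n$ by gluing along a Neumann-type circle. The only cosmetic difference is that the paper's inductive step cuts $M$ along a separating loop $\gamma$ into two pieces each with strictly fewer Dirichlet components and applies the inductive hypothesis to both, whereas you always peel off a single simple block (your piece $A$) and apply the hypothesis only to the remainder $B$; both organizations work for the same reason.
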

\begin{proof}[lemma \ref{lemma:what-to-glue}]
	We will prove this by induction on $n$. If $n=1$ then a single simple block is enough: for $k=0$ it is the first type; for $k=1$ it is the second type; for $k=2$ it is the third type.
	
	Assume $n\geq 2$. Let $\gamma\subset M$ be a simple loop such that on any side of $\gamma$ there is at least $1$ Dirichlet-type boundary component and at most $1$ Neumann-type boundary component. Let $M_1,M_2$ be the connected components of $M\setminus \gamma$. On each of $M_1,M_2$, mark the boundary component that is the image of $\gamma$ as Neumann-type. With this marking, on each one of $M_1,M_2$ there are at most $2$ Neumann-type boundary components, and at most $n-1$ Dirichlet-type boundary components. Therefore, by induction hypothesis, $M_i$ ($i=1,2$) is homeomorphic to some gluing $M'_i$ of simple block types along Neumann-type boundary components. Gluing $M'_1$ and $M'_2$ along the image of $\gamma$ is then homeomorphic to $M$.
\end{proof}

\begin{proof}[theorem \ref{thm:ovals}]
	By lemma \ref{lemma:gluing-blocks} it is enough to decompose $\sph^2$ into simple blocks, glued together, such that the the image in $\sph^2$ of the Dirichlet-type boundry components of the simple blocks is equivalent to the given configuration $C$.
	
	By cutting $\sph^2$ along the ovals in $C$, it is enough to decompose manifolds of the form $M_m=\sph^2\setminus \left(D_1\cup\ldots \cup D_m\right)$ where $m\geq 1$ and $D_1,\ldots,D_m\subset \sph^2$ are disjoint open discs, such that in the decomposition of $M_m$, all the Dirichlet-type boundary components are in $\partial M_m$ (unglued) and all the Neumann-type boundary components are glued in the interior of $M_m$.
	But this is just lemma \ref{lemma:what-to-glue} applied on $M_m$ with all the boundary components $D_1,\ldots,D_m$ are declared Dirichlet-type.
	
	
	Note that the number of simple blocks needed for $\sph^2$ is $2n$ where $n$ is the number of ovals in $C$, because each simple block type has exactly one Dirichlet-type boundary and each oval in $C$ is a boundary of exactly two simple blocks. Therefore the bounds we get from lemma \ref{lemma:gluing-blocks}, while naively using the number of simple blocks, are true also when using the number of ovals, up to change of constants.
	
	The inequality $\lambda_n\leq\lambda$ is the Courant nodal domain theorem \cite{Courant}.
	
	Note that the Gaussian curvature $\kappa_g(p)$ of $g$ at a point $p\in\sph^2$ is obviously bounded by the supremal curvatures of the simple blocks
	\[
	\left|\kappa_g(p)\right|\leq \max_{S\text{ is a simple block}}\sup_{q\in S} \left|\kappa_S(q)\right| = \alpha_4\lambda
	\]
	where $\alpha_4$ is an absolute constant.
\end{proof}
\section{Interpolation of metric - preparations}
\label{section:interpolation-preparations}
\subsection{Idea} 
Let $M$ be a smooth orientable manifold with a smooth measure $\mu$, and let $f:M\to\RR$ be a smooth function on $M$ without any critical points. Then, according to lemma \ref{lemma:pointwise}, choosing a smooth Riemannian metric on $M$ is equivalent to choosing a smooth vector field which will become $\nabla_g f = g^{ij}\partial_j f$.

Note that $f$ being an eigenfunction of the minus Laplacian with an eigenvalue $\lambda$ is equivalent to $\nabla f$ having divergence $-\lambda f$.
Therefore to prove theorem \ref{thm:interpolation-metric} it is enough to find a smooth vector field $u$, which satisfies the following conditions:
\begin{enumerate}
	\item \label{cond:positivity} $\partial_u f>0$
	\item $\nabla \cdot u = -\lambda f$ 
	\item For some $U'\subset U$ with $M\setminus U'$ compact, $\left. u\right|_{U'}$ agrees with $\nabla_{g_U} f$.
\end{enumerate}

Given such $u$, we can apply lemma \ref{lemma:pointwise} with $\omega = df$ to recover the Riemannian metric $g$ for which $u = \nabla_g f$ and it would satisfy the claimed properties in the conclusion of theorem \ref{thm:interpolation-metric}.

The process of building $u$ is divided to three steps:
\begin{itemize}
	\item Setting the values of integrals of $\partial_u f$ over level lines of $f$.
	\item Building a smooth function $h:=\partial_u f > 0$ such that its integrals over level lines are as given by the first step.
	\item Building a vector field $u$ such that $\partial_u f$ is as given by the second step and $\nabla\cdot u = -\lambda f$.
\end{itemize}
To do this it is more convenient to look at the quotient of $M$ given by identifying points which belong to the same level set of $f$. The resulting topological space is a certain kind of $1$-dimensional non-Hausdorff manifold, which we will term "blueprint".

\subsection{Blueprints}
Here we will use the following notion of non-Hausdorff manifolds:
\begin{definition}
	A \emph{non-Hausdorff manifold} of dimension $n$ is a topological space $X$ which is locally homeomorphic to $\RR^n$ at every point.
\end{definition}
One well-known example of a $1$-dimensional non-Hausdorff manifold are the \emph{line with two origins}, obtained from two copies of $\RR$ by identifying all corresponding points of the copies but the origin. Another well-known example is the \emph{branching line} obtained by identifying the corresponding points which are $<0$ in the two copies of $\RR$.
\begin{definition}
	Given a $1$-dimensional non-Hausdorff manifold $X$, we will say that a point $p\in X$ is \emph{singular} if there is another point $q\in X$ such that $q\neq p$ but every neighborhood of $p$ intersects every neighborhood of $q$. Otherwise, we will say that $p$ is \emph{regular}. We will denote by $\sing{X}$ the set of singular points of $X$.
\end{definition}
\begin{definition}
	A \emph{blueprint} is a pair of a $1$-dimensional non-Hausdorff manifold $X$, and a continuous map $\pi:X\to\RR$ such that:
	\begin{enumerate}
		\item $X$ has finitely many singular points
		\item For any point $p\in X$ there is a neighborhood $p\in I\subset X$ (which can be choosen to be homeomorphic to an interval) such that $\left. \pi\right|_I:I\to \RR$ is injective
		\item The image $\pi(X)\subset \RR$ of $\pi$ is bounded.
	\end{enumerate}
\end{definition}

The following lemma shows that blueprints are obtained as a quotient of a surface by level sets. Here a point $p$ is called a \emph{weak local minimum} of a function $f$ if for some neighborhood $U$ of $p$ we have $\forall q\in U: f(q)\geq f(p)$. A \emph{weak local maximum} is defined analogously.

\begin{lemma}\label{lemma:quotient-is-blueprint}
	Let $M$ be a connected smooth compact surface possibly with boundary, $f:M\to\RR$ a smooth function, and $U\subset M$ an open subset of $M$ such that $\overline{U}$ does not contain any critical point of $f$ and $\overline{U}\cap\partial M=\emptyset$. Assume that $\left. f\right|_{\partial U}:\partial U\to\RR$ has only finitely many points which are (weak) local minimum or maximum.
	Let $\sim_{f}$ be the equivalence relation on $U$ which is defined as $x\sim_f y$ for $x,y\in U$ iff there is a path in $U$ between $x$ and $y$ on which $f$ is constant.
	Then $U/\sim_f$ together with the function 
	$\pi_f:U/\sim_f \to\RR$
	which corresponds to $\left. f\right|_U:U\to\RR$
	is a blueprint.
\end{lemma}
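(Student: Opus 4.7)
The plan is to verify the three conditions in the definition of blueprint together with the $1$-dimensional non-Hausdorff manifold structure of $U/\sim_f$. Boundedness of $\pi_f(U/\sim_f)=f(U)$ is immediate from compactness of $M$, so the real work lies in constructing local charts and in bounding the number of singular points.

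For the local structure at a point $p\in U$, since $df\neq 0$ throughout $U$, I would take local coordinates $(x,y)$ in a small square chart $\psi$ in which $f=f(p)+y$, and let $\gamma(s)=\psi^{-1}(0,s)$ for $s\in(-\epsilon,\epsilon)$ be a smooth transversal through $p$. For each such $s$ let $K_s$ denote the connected component of $f^{-1}(f(\gamma(s)))\cap U$ containing $\gamma(s)$, and set $V=\bigcup_s K_s$; then $V$ is saturated under $\sim_f$ by construction. To prove $V$ is open I would take $q\in K_s$, connect $\gamma(s)$ to $q$ by an embedded path inside $K_s$, and build a flow-box tubular neighborhood of that path by gluing local charts in which $f=y$; this shows that every point sufficiently close to $q$ lies in some $K_{s'}$ with $s'$ close to $s$. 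Then $\pi(V)$ is an open saturated neighborhood of $[p]$, and the same flow-box argument shows that $\sigma:(-\epsilon,\epsilon)\to\pi(V)$, $s\mapsto[\gamma(s)]$, is a continuous bijection that is also open, hence a homeomorphism. Since $\pi_f\circ\sigma=f\circ\gamma$ is strictly monotone, $\pi_f$ is injective on $\pi(V)$, which verifies condition $(2)$ of the blueprint definition and simultaneously shows that $U/\sim_f$ is locally homeomorphic to $\RR$ at every point.

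The main obstacle is the bound on the number of singular points. I expect to argue that if $[p]\neq[q]$ are non-Hausdorff-separated, then using the saturated flow-box neighborhoods constructed above one first concludes $f(p)=f(q)$, and then that the distinct level components $K_p,K_q$ must satisfy $K_{s,p}=K_{s,q}$ for all small $s$ of one sign (where $K_{s,\cdot}$ is the level component through the respective transversal at level $f(p)+s$). Consequently $\overline{K_p}^M\cap\overline{K_q}^M$ contains some $x\in\partial U$ at which level components of $U$ on one side of $f(x)$ branch while those on the other side do not; a local analysis in coordinates $(a,b)$ with $f=b$ then forces $x$ to be a weak local extremum of $f|_{\partial U}$, because at a non-extremum $f|_{\partial U}$ is locally monotone through $x$ and only a single branch of each nearby level set can enter $U$ near $x$, precluding any merging. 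The hypothesis supplies finitely many such $x$, and at each of them only finitely many level components of $U$ can have $x$ in their closure (a bound that comes from the local structure of $\partial U$ near the extremum), so finitely many singular points in total. The delicate step is precisely this local analysis at $\partial U$ and the translation from the topological condition of non-Hausdorff separation in $U/\sim_f$ back into a statement about a shared boundary limit point; everything else is a packaging of standard flow-box arguments with the quotient topology.
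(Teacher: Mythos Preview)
Your proposal is correct and follows essentially the same approach as the paper. The only organizational difference is that the paper first proves the quotient map $Q:U\to U/\sim_f$ is open (via exactly your path/flow-box argument) and then works with small, not necessarily saturated, charts $U_p$ on which $x\sim_f y\iff f(x)=f(y)$, whereas you build saturated charts $V=\bigcup_s K_s$ directly; for the singular-point bound the paper phrases your branching analysis as the existence of a path $\gamma$ in $\overline U$ from $p$ to $q$ with $f\circ\gamma$ constant whose intersection with $\partial U$ consists only of weak local extrema of $f|_{\partial U}$.
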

\begin{proof}
	First we note that the quotient map $Q: U\to U/\sim_f$ is an open function. For this it is enough to show that for any $p,q\in U$ such that $p\sim_f q$ and for any neighborhood $U_p\subset U$ of $p$, there is a neighborhood $U_q\subset U$ of $q$ such that any $q'\in U_q$ is $\sim_f$-equivalent to some $p'\in U_p$. This is obvious considering a neighborhood of the path that shows $p\sim_f q$.
	
	Now we will show that $U/\sim_f$ is locally homeomorphic to $\RR$ and that $\pi_f$ satisfies condition 2.
	As we assume that $f$ has no critical points in $U$, it follows that $\left. f\right|_U:U\to\RR$ is an open map. Additionally, we get that for any point $p\in U$ there is a neighborhood $U_p\subset U$ such that:
	\[
	\forall x,y\in U_p: x\sim_f y \iff f(x)=f(y)
	\]
	Using this choice of $U_p$, we have that $f$ induces a homeomorphism between $U_p/\sim_f$ and an open interval $I=f(U_p)\subset \RR$ (here we denote by $U_p/\sim_f$ the quotient space formed from $U_p$ and the equivalence relation $\sim_f$ restricted on $U_p$, in order to distinguish it from the subspace ${Q(U_p)\subset U/\sim_f}$ which a priori could have a different topology). This homeomorphism factors as
	\[
	U_p/\sim_f \to Q(U_p) \to I
	\]
	with intermediate maps being bijective. Therefore we get that $Q(U_p)$ is homeomorphic by $\pi_f$ to $I$.
	Additionally, as $Q$ is an open map, we get that $Q(U_p)$ is open in $U/\sim_f$. Therefore $Q(U_p)$ is a neighborhood of $Q(p)$ which is homeomorphic to $I$ (and therefore to $\RR$). This means that $U/\sim_f$ is locally homeomorphic to $\RR$. Additionally $\pi_f$ is injective on $Q(U_p)$, which means that $f$ satisfies condition 2.
	
	Note that the image $f(U)\subset\RR$ is clearly bounded because $M$ is compact.
	Therefore it is enough to show that $U/\sim_f$ has finitely many singular points. For this note that if $p,q\in U$ correspond to points in $U/\sim_f$ which are distinct but not separated by neighborhood, then there is a path $\gamma$ in $\overline{U}$ from $p$ to $q$ such that $f\circ\gamma$ is constant and any point in $\gamma\cap\partial U$ is a weak local extremal point of $\left.f\right|_{\partial U}$. By our assumption, there are only finitely many such extremal points, and from this it follows that $U/\sim_f$ has finitely many singular points.
\end{proof}

\begin{definition}
	Given a blueprint $\pi:X\to\RR$ and two points $p_0,p_1\in{X}$,
	\begin{enumerate}
		\item we will define a \emph{left half-neighborhood} of $p_0$ to be the intersection of a neighborhood of $p_0$ in $X$ and the $\pi$-preimage of a left half-neighborhood of $\pi(p_0)\in\RR$. Similarly define \emph{right half-neighborhood}.
		\item a \emph{punctured left half-neighborhood} of $p_0$ is a set $C$ such that $C\cup\left\{p_0\right\}$ is a left half-neighborhood of $p_0$. Similarly define \emph{punctured right half-neighborhood}.
		\item we will say that $p_0\sim_{+}p_1$ iff every right half-neighborhood of $p_0$ intersects every right half-neighborhood of $p_1$. Similarly, we will say that $p_0\sim_{-} p_1$ iff every left half-neighborhood of $p_0$ intersects every left half-neighborhood of $p_1$.
	\end{enumerate}
\end{definition}

\begin{figure}
	\centering
\begin{tikzpicture}[x=0.75pt,y=0.75pt,yscale=-1,xscale=1]
	
	\draw    (100,82.7) -- (254,82.7) ;
	\draw  [fill={rgb, 255:red, 0; green, 0; blue, 0 }  ,fill opacity=1 ][line width=1.5]  (249.5,87.2) .. controls (249.5,84.71) and (251.51,82.7) .. (254,82.7) .. controls (256.49,82.7) and (258.5,84.71) .. (258.5,87.2) .. controls (258.5,89.69) and (256.49,91.7) .. (254,91.7) .. controls (251.51,91.7) and (249.5,89.69) .. (249.5,87.2) -- cycle ;
	\draw  [fill={rgb, 255:red, 0; green, 0; blue, 0 }  ,fill opacity=1 ][line width=1.5]  (249.5,78.2) .. controls (249.5,75.71) and (251.51,73.7) .. (254,73.7) .. controls (256.49,73.7) and (258.5,75.71) .. (258.5,78.2) .. controls (258.5,80.69) and (256.49,82.7) .. (254,82.7) .. controls (251.51,82.7) and (249.5,80.69) .. (249.5,78.2) -- cycle ;
	\draw    (258.5,78.2) .. controls (315,79.05) and (337,67.55) .. (377,37.55) ;
	\draw    (258.5,87.2) .. controls (328,87.55) and (341,83.4) .. (377.5,113.55) ;
	\draw  [fill={rgb, 255:red, 0; green, 0; blue, 0 }  ,fill opacity=1 ][line width=1.5]  (95.5,87.2) .. controls (95.5,84.71) and (97.51,82.7) .. (100,82.7) .. controls (102.49,82.7) and (104.5,84.71) .. (104.5,87.2) .. controls (104.5,89.69) and (102.49,91.7) .. (100,91.7) .. controls (97.51,91.7) and (95.5,89.69) .. (95.5,87.2) -- cycle ;
	\draw  [fill={rgb, 255:red, 0; green, 0; blue, 0 }  ,fill opacity=1 ][line width=1.5]  (95.5,78.2) .. controls (95.5,75.71) and (97.51,73.7) .. (100,73.7) .. controls (102.49,73.7) and (104.5,75.71) .. (104.5,78.2) .. controls (104.5,80.69) and (102.49,82.7) .. (100,82.7) .. controls (97.51,82.7) and (95.5,80.69) .. (95.5,78.2) -- cycle ;
	\draw    (3,34.55) .. controls (37,78.9) and (30.5,76.55) .. (95.5,78.2) ;
	\draw    (3,124.05) .. controls (43,94.05) and (42,86.05) .. (95.5,87.2) ;
	\draw    (-13.5,210.2) -- (416.5,210.2) ;
	\draw  [dash pattern={on 0.84pt off 2.51pt}]  (178.5,117.2) -- (178.5,191.05) ;
	\draw [shift={(178.5,193.05)}, rotate = 270] [color={rgb, 255:red, 0; green, 0; blue, 0 }  ][line width=0.75]    (10.93,-3.29) .. controls (6.95,-1.4) and (3.31,-0.3) .. (0,0) .. controls (3.31,0.3) and (6.95,1.4) .. (10.93,3.29)   ;
	
	\draw (92.5,100.2) node [anchor=north west][inner sep=0.75pt]   [align=left] {$p_2$};
	\draw (94.5,46.7) node [anchor=north west][inner sep=0.75pt]   [align=left] {$p_1$};
	\draw (248.5,47.2) node [anchor=north west][inner sep=0.75pt]   [align=left] {$p_3$};
	\draw (253,97.7) node [anchor=north west][inner sep=0.75pt]   [align=left] {$p_4$};
	\draw (184,143.7) node [anchor=north west][inner sep=0.75pt]   [align=left] {$\pi$};
	\draw (379.5,184.7) node [anchor=north west][inner sep=0.75pt]   [align=left] {$\RR$};
	
	\draw (379.5,80) node [anchor=north west][inner sep=0.75pt]   [align=left] {$X$};

\end{tikzpicture}
\caption{Example of a blueprint $\pi:X\to\RR$ with $\sing{X}=\left\{p_1,p_2,p_3,p_4\right\}$. This $X$ is built from two copies of $\RR$ by identifying the two copies of $\left(0,1\right)\subset \RR$ together.}
\label{fig:blueprint-example}
\end{figure}

For example, in figure \ref{fig:blueprint-example}, we have $p_1 \sim_{+} p_2$ and $p_3\sim_{-}p_4$, but $p_1\not\sim_{-}p_2$ and $p_3\not\sim_{+}p_4$.

Note that $\sim_{+}$ and $\sim_{-}$ are equivalence relations on ${X}$ (transitivity follows from the assumption that $\sing{X}$ is finite), and that if $p_0\sim_{+}p_1$ (or $p_0\sim_{-}p_1$) then either $p=q$ or $p,q\in\sing{X}$. Additionally, if $p\in\sing{X}$ and $\left(a_i\right)_i\subset X\setminus\sing{X}$ is a sequence that converges to $p$ with $\pi(a_i)$ monotonously decreasing (or increasing), then for any $q\in\sing{X}$, $a_i$ also converges to $q$ iff $p\sim_{+}q$ (or $p\sim_{-}q$ respectively).

Note that given a blueprint $\pi:X\to\RR$, we may define derivatives on functions over $X$ by using $\pi$ as a coordinate chart. This gives a canonical structure of smooth manifold to a blueprint.
\begin{definition}
	Given a blueprint $\pi:X\to\RR$, an \emph{embedded open interval} is a subset $I\subset X$ such that the restriction $\left.\pi\right|_I:I\to\RR$ is a homeomorphism to its image and its image $\pi(I)\subset \RR$ is an open interval in $\RR$.
\end{definition}
Note that for any continuous injection $i:(a,b)\to X$ from an open interval $(a,b)\subset\RR$, its image $i\left((a,b)\right)\subset X$ is an embedded open interval in $X$. Indeed, if $x\in(a,b)$ is a point such that $\pi\circ i:(a,b)\to\RR$ is not locally strictly monotonic near $x$ then, by the properties of a blueprint, $i:(a,b)\to X$ would not be locally injective near $x$.
\begin{definition}
	Given a blueprint $\pi:X\to\RR$, we will say that a function $\phi:X\to\RR$ is \emph{pseudosmooth} if for any closed interval $I\subset\RR$ and any path-connected component $ U$ of $\pi^{-1}(I)$, the function $\widetilde{\phi}_U:I\to\RR$ defined by
	\[
	\widetilde{\phi}_U(t) = \sum_{x\in U\cap \pi^{-1}(t)} \phi(x)
	\]
	is smooth on $I$.
	Note that the finiteness of the sum follows from the connectedness of $U$ and from the finiteness of $\sing{X}$.
\end{definition}
Note that, in the above definition, $U$ being a path-connected component of $\pi^{-1}(I)$ is equivalent to $U$ being a connected (relatively-)clopen subset of $\pi^{-1}(I)$, therefore also equivalent to $U$ being a connected component of $\pi^{-1}(I)$.

Additionally, note that pseudosmooth functions are in particular smooth on the regular points.
Also note that if $I\subset X$ is an embedded open interval and $\psi:\RR\to\RR$ is a smooth bump function with support in $\overline{\pi(I)}$ then the function $\phi:X\to\RR$ defined by
\[
\phi(x) = \begin{cases}
	\psi(\pi(x)) & x\in I \\
	0 & x\notin I
\end{cases}
\]
is pseudosmooth.

We will say that a function $g:I\to\RR$ on a finite open interval $I$ is \emph{strictly-smooth} if $g$ is smooth, and for every $k\geq 0$, the limit of $g^{(k)}$ at each endpoint of $I$ exists. Equivalently, $g:I\to\RR$ is strictly-smooth iff it can be continued to a smooth function $\overline{I}\to\RR$ (or, equivalently, to a smooth function $\RR\to\RR$).

We will say that a function $g:I\to\RR$ on a finite open interval $I$ is \emph{strictly-bump} if $g$ is smooth, positive on $I$, and for every $k\geq 0$, the limit of $g^{(k)}$ at the endpoints of $I$ is $0$.
Equivalently, $g:I\to\RR$ is strictly-bump iff its extension to $\RR$ by $0$ outside $I$ is smooth.

Given a strictly-bump function $g:I\to\RR$ where $I$ is an embedded open interval of a blueprint $X$, it will be convenient to extend $g$ by $0$ to $X\setminus I$. Note that this extension might not be smooth near some singular points of $X$, but it would always be pseudosmooth.

The following lemma describes an analogue of a partition of unity argument for blueprints:

\begin{lemma} \label{lemma:function-decomposition}
	Suppose that we have a blueprint $\pi:X\to\RR$, a positive pseudosmooth function $\phi:X\to\RR$ and a finite family $\mathcal{F}=\left\{\left(I_i, \chi_i\right)\right\}_{i}$ of embedded open intervals $I_i\subset X$ and strictly-bump functions $\chi_i:I_i\to\left(0,\infty\right)$ extended by $0$ to $X\setminus I_i$ such that:
	\begin{enumerate}
		\item \label{condition:function-decomposition:cover} $\left\{I_i\right\}_i$ is a cover of $X$;
		\item \label{condition:function-decomposition:boundary} there are strictly-smooth positive functions $h_i:I_i\to\RR$ such that for $x\in X$ outside some compact subset of $X$,
		\begin{equation}\label{eq:smooth-linear-combination}
			\sum_i h_i(x) \chi_i(x) = \phi(x)
			.
		\end{equation}
	\end{enumerate}
	Then there are strictly-smooth bounded positive functions $\widetilde h_i:I_i\to\RR$ such that \eqref{eq:smooth-linear-combination} holds (with $\widetilde h_i$ instead of $h_i$) for every $x\in X$.
\end{lemma}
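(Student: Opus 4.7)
The plan proceeds in two stages: first, I construct an auxiliary positive strictly-smooth decomposition $h_i^\dagger : I_i \to \RR$ with $\sum_i h_i^\dagger \chi_i = \phi$ on all of $X$; second, I interpolate between this $h_i^\dagger$ and the given $h_i$ (which is only known to satisfy $\sum_i h_i \chi_i = \phi$ outside a compact set $K$) using a smooth cutoff. The interpolation relies on the observation that since the decomposition equation $\sum_i h_i \chi_i = \phi$ is linear in the $h_i$'s, convex combinations of valid decompositions remain valid.

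For the interpolation, pick a smooth $\tilde\eta : \RR \to [0,1]$ with $\tilde\eta \equiv 0$ on $\pi(K)$ and $\tilde\eta \equiv 1$ outside a slightly larger compact neighborhood, and set $\eta := \tilde\eta \circ \pi$. Because $\pi|_{I_i}$ is a homeomorphism onto an open interval, $\eta$ is strictly-smooth on each $I_i$, and $\eta \equiv 0$ on $\pi^{-1}(\pi(K)) \supset K$. Define
\[
\widetilde h_i := \eta\, h_i + (1-\eta)\, h_i^\dagger.
\]
Each $\widetilde h_i$ is a convex combination of positive strictly-smooth functions on $I_i$, hence itself positive and strictly-smooth. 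For every $x \in X$, the pointwise sum $\sum_i \widetilde h_i(x)\chi_i(x) = \eta(x)\phi(x) + (1-\eta(x))\phi(x) = \phi(x)$, using the $h_i^\dagger$-decomposition where $\eta = 0$ (in particular on $K$) and the $h_i$-decomposition where $\eta \neq 0$ (outside $K$). Boundedness is automatic since $\pi(I_i)$ is a bounded interval, so strictly-smooth functions on $I_i$ extend continuously to the compact $\overline{I_i}$.

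To construct $h_i^\dagger$, I use a partition-of-unity-style formula. Choose positive weights $w_i : I_i \to (0,\infty)$, strictly-smooth on $I_i$, with $w_i/\chi_i$ also strictly-smooth on $I_i$ (for example, $w_i = \chi_i \kappa_i$ for some positive strictly-smooth $\kappa_i$). Let $\sigma := \sum_j w_j \chi_j$; since $\{I_i\}$ covers $X$, we have $\sigma > 0$ on all of $X$. Define
\[
h_i^\dagger := \frac{w_i\, \phi}{\sigma}.
\]
Then $\sum_i h_i^\dagger \chi_i = \phi$ automatically, and positivity on $I_i$ follows from $\phi, w_i, \sigma > 0$.

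The main obstacle is verifying strict smoothness of $h_i^\dagger$ on each $I_i$ at the finitely many singular points of $X$ lying in $I_i$. At a singular $p \in I_i$ with partners $q_\ell \in I_{j_\ell}$, both restrictions $\phi|_{I_i}$ and $\sigma|_{I_i}$ can carry jump discontinuities at $p$, since pseudosmoothness controls only the fiber-sum $\sum_{q \in \pi^{-1}(\pi(p))} \phi(q)$ rather than individual values. For $h_i^\dagger = w_i\phi/\sigma$ to be smooth at $p$ along $I_i$, these jumps must be compatible in the ratio; matching the values and all higher derivatives amounts to finitely many scalar conditions on the weights $w_i$ at singular points, which can be arranged by adjusting $\kappa_i$ locally near each singular point (one uses pseudosmoothness of $\phi$ to compute the relevant smooth extensions of $\phi|_{I_i \setminus \sing{X}}$ through $p$). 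This finite case-by-case verification is the technical heart of the proof.
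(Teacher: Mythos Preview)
Your two-stage plan (build a global decomposition, then interpolate with the given boundary data via a cutoff $\eta=\tilde\eta\circ\pi$) is sound, and the interpolation step is fine. The gap is entirely in Stage~1: the formula $h_i^\dagger = w_i\phi/\sigma$ does not produce strictly-smooth functions on $I_i$ without substantial further work, and you have not supplied that work.

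Concretely, at a singular point $p\in I_i$ neither $\phi|_{I_i}$ nor $\sigma|_{I_i}$ need be continuous: pseudosmoothness only controls the \emph{fiber sum} $\sum_{q\in\pi^{-1}(\pi(p))}\phi(q)$, not the individual restriction $\phi|_{I_i}$. For instance, in the branching-line blueprint with two intervals $I_1,I_2$ through the two origins, one computes that on the branch side $h_1^\dagger(t)=\phi(t_1)/\chi_1(t_1)$ while on the merged side $h_1^\dagger(t)=w_1(t)\phi(t)/[w_1\chi_1+w_2\chi_2](t)$; matching these smoothly at $t=0$ imposes a condition on every derivative of $w_1,w_2$ at $0$. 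Your phrase ``finitely many scalar conditions'' is therefore incorrect: it is an infinite system (one condition per derivative order, per singular point, per interval through it). Solving it requires producing compatible formal Taylor series at each singular point from the pseudosmoothness hypothesis and then realizing them by a Borel-type construction --- exactly the step you label ``the technical heart'' and then omit.

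The paper's proof confronts this head-on. It first replaces $\mathcal{F}$ by a canonical family in which exactly one interval $I_p$ passes through each singular point $p$ and the remaining intervals $\mathcal{F}_{\mathrm{reg}}$ cover only regular points. In this form the equation $\sum_p\widetilde h_p\chi_p+\sum_{\mathrm{reg}}\widetilde h_i\chi_i=\phi$ near a $\sim_\pm$-equivalence class $K$ determines the \emph{sum over $p\in K$} of the Taylor series of $\widetilde h_p\chi_p$; pseudosmoothness of $\phi$ is precisely what guarantees these constraints are consistent, so individual Taylor series $T_p$ can be chosen. One then picks $\widetilde h_p$ realizing $T_p$, forms $\phi_1=\sum_{\mathrm{reg}}h_i\chi_i+\sum_p\widetilde h_p\chi_p$, and observes that $\phi/\phi_1$ is flat with value $1$ at every singular point, so the multiplicative correction $h_i\mapsto h_i\cdot\phi/\phi_1$ preserves strict-smoothness. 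Your quotient ansatz $w_i\phi/\sigma$ hides rather than avoids this Taylor-series bookkeeping; to complete your argument you would end up reproducing essentially the same construction.
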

\begin{proof}
	As a first step, we will show that we may assume that the set of intervals in $\mathcal{F}$ has a certain form. Let $I\subset X$ be an embedded open interval, and let $J_1,J_2\subset I$ be two open subintervals such that $I=J_1\cup J_2$ and $J_1,J_2$ have no common endpoints. That is, if we denote $\pi(I)=(a,b)\subset \RR$, $\pi(J_i)=(c_i,d_i)\subset \RR$, one of the following options are satisfied for some $i, j$ with $\left\{i,j\right\} =\left\{1,2\right\}$:
	\begin{enumerate}
		\item $a = c_i < c_j < d_j < d_i = b$
		\item $a = c_i < c_j < d_i < d_j = b$
	\end{enumerate}
	Let $\chi_{J_i}:J_i\to\left(0,\infty\right)$ be strictly-bump functions and let $\chi_I=\chi_{J_1}+\chi_{J_2}$ which is also strictly-bump. Let $\mathcal{F}$ be a family such that $\left(I,\chi_I\right)\in\mathcal{F}$ and let
	\[
	\mathcal{F}'=\left(\mathcal{F}\setminus\left\{\left(I,\chi_I\right)\right\}\right)\cup\left\{\left(J_i, \chi_{J_i}\right):i=1,2\right\}
	\]
	We claim that the statements of lemma \ref{lemma:function-decomposition} for $\mathcal{F}$ and for $\mathcal{F}'$ are equivalent. The cover assumptions are obviously equivalent. For the rest it is enough to show the following equality of set of functions $X\to\RR$:
	\begin{multline*}
	\left\{x\mapsto \sum_{i=1,2}h_{J_i}(x)\chi_{J_i}(x) : h_{J_i}:J_i\to\RR\text{ are strictly-smooth positive}\right\}
	=\\=
	\left\{x\mapsto h_{I}(x)\chi_{I}(x) : h_{I}:I\to\RR\text{ is strictly-smooth positive}\right\}
	\end{multline*}
	The $\supset$ direction follows immediately from $\chi_{J_1}+\chi_{J_2}=\chi_I$. The $\subset$ direction follows by defining $h_I = \frac{h_{J_1}\chi_{J_1}+h_{J_2}\chi_{J_2}}{\chi_I}$ and noting that the restriction on the endpoints of $J_1,J_2$ implies that $h_I$ is strictly-smooth.
	
	From the above it follows that, given a family $\mathcal{F}$, we may take any interval $I$ in it and break it up into two subintervals $J_1,J_2$ whose union is $I$ and have no common endpoints; and vice-versa, if $\mathcal{F}$ contains two intervals $J_1,J_2$ such that their union is also an embedded interval $I$ such that $J_1,J_2$ have no common endpoints in $I$ then we may replace $J_1,J_2$ with $I$.
	Using these operations we may assume the following about $\mathcal{F}$:
	\begin{enumerate}
		\item Every interval from $\mathcal{F}$ passes through at most $1$ singular point of $X$; otherwise, any interval which passes through more than $1$ singular points can be broken down to subintervals such that each one passes through at most $1$ singular point.
		\item For each singular point $p\in\sing{X}$ there is exactly $1$ interval from $\mathcal{F}$ which passes through $p$; otherwise, given two embedded intervals $I_1,I_2\ni p$, replace $\left\{I_1\right\}$ with $\left\{J_1,K,J_2\right\}$ such that $I_1=J_1\cup K\cup J_2$, $p\in K\subset I_1\cap I_2$ and $p\notin J_1,J_2$; then replace $\left\{K,I_2\right\}$ with $\left\{I_2\right\}$.
		\item The subset of intervals from $\mathcal{F}$ which do not pass through any singular point covers $X\setminus\sing{X}$; otherwise, for any $p\in\sing{X}$ and $I\ni p$ with $\pi(I)=(\pi(p)-u,\pi(p)+v)$, replace $\left\{I\right\}$ by 
		\[
		\begin{Bmatrix}
			I\cap\pi^{-1}\left((\pi(p)-u,\pi(p))\right), \\
			 I\cap\pi^{-1}\left((\pi(p)-\epsilon,\pi(p)+\epsilon)\right), \\
			  I\cap\pi^{-1}\left((\pi(p),\pi(p)+v)\right)
			\end{Bmatrix}.
			\]
		\item For any two singular points $p_0,p_1\in\sing{X}$ and the corresponding embedded intervals $I_{p_i}\ni p_i$ in $X$, the intervals $\pi\left(I_{p_0}\right),\pi\left(I_{p_1}\right)\subset\RR$ are either disjoint or equal; otherwise apply the previous transformation with $\epsilon$ small enough so that the same $\epsilon$ will be used for every singular point.
	\end{enumerate}
	
	
	
	That is,
	we may assume that $\mathcal{F}$ can be divided as
	\[
	\mathcal{F} = \mathcal{F}_{\mathrm{reg}}\cup \left\{(I_p,\chi_p):p\in \sing{X}\right\}
	\]
	where $I_p$ is a small enough neighborhood of $p$ such that for any two points $p_0,p_1\in\sing{X}$ the intervals $\pi(I_{p_0}), \pi(I_{p_1})$ are either disjoint or equal, and the intervals from $\mathcal{F}_\mathrm{reg}$ cover exactly the regular points.
	
	Let $K$ be an equivalence class of $\sim_{+}$ and let $U\subset X\setminus\sing{X}$ such that $U\cup\{p\}$ is a small enough right half-neighborhood of some $p\in K$ (note that the same $U$ will work for any choice of $p\in K$). Then to say that \eqref{eq:smooth-linear-combination} holds in $U$ is to say that
	\[
	\sum_{p\in K} \widetilde h_p(x)\chi_p(x) + \sum_{\left(I_i,\chi_i\right)\in\mathcal{F}_\mathrm{reg}} \widetilde h_i(x)\chi_i(x) = \phi(x)
	\]
	Note that the second sum has vanishing Taylor series at $K$, hence it fixes the sum, over $p\in K$, of the Taylor series of $\widetilde h_p(x)\chi_p(x)$ around $p$.
	
	Similarly, by reducing to a left half-neighborhood of points in an equivalence class of $\sim_{-}$, we get that the sum over an equivalence class of $\sim_{-}$ of Taylor series of $\widetilde h_p\chi_p$ is fixed. For \eqref{eq:smooth-linear-combination} to hold in some $x=p\in \sing{X}$ it just mean that $\widetilde h_p(p)\chi_p(p) = \phi(p)$. From the pseudosmoothness of $\phi$ it follows that all those linear constraints on the Taylor series around points in $\sing{X}$ are compatible, hence one can find, for any $p\in\sing{X}$, a Taylor series $T_p$ such that the constraints above are satisfied.

	For any $p\in\sing{X}$, let $\widetilde h_p:I_p\to\RR$ be a strictly-smooth bounded positive function such that the Taylor series of $\widetilde h_p(x)\chi_p(x)$ at $x=p$ is $T_p$.

	Let
	\[
	\phi_1(x) = \sum_{\left(I_i,\chi_i\right)\in\mathcal{F}_\mathrm{reg}}h_i(x)\chi_i(x) + \sum_{p\in \sing{X}}\widetilde h_p(x)\chi_p(x)
	.
	\]
	Then $\frac{\phi}{\phi_1}$ is continuous and flat at singular points, with value $1$ there. Additionally $\phi_1$ is positive, and agrees with $\phi$ outside a compact subset of $X$.
	Therefore the following decomposition of $\phi$ is as needed:
	\[
	\phi(x) = \sum_{\left(I_i,\chi_i\right)\in\mathcal{F}_\mathrm{reg}}\frac{h_i(x)\phi(x)}{\phi_1(x)}\chi_i(x) + \sum_{p\in \sing{X}}\frac{\widetilde h_p(x)\phi(x)}{\phi_1(x)}\chi_p(x)
	,
	\]
	proving lemma \ref{lemma:function-decomposition}.
	
\end{proof}

Let us say that a subset $C$ of a blueprint $X$ is simple if $C$ has finitely many connected components.
Note that, for any simple subset $C$, the set $\partial C$ is finite.
Indeed, if we assume that $X$ and $C$ are connected, we get that $X\setminus\sing{X}$ is homeomorphic to a disjoint union of finitely many copies of $\RR$, and that the intersection of each one of them with $C$ is a disjoint union of at most $2$ intervals. This implies that
\[
\left|\partial C\setminus\sing{X}\right|\leq 2\left|\left\{\text{connected components of }X\setminus\sing{X}\right\}\right|<\infty
\]

In particular, for any $x\in X$, 
there is some punctured left half-neighborhood $P$ of $x$ such that $P\cap\partial C=\emptyset$, which implies that either $P\subset C$ or $P\cap C=\emptyset$ (and similarly for punctured right half neighborhood).
Therefore one can define a finitely-supported function $d_C:X\to\left\{-1,0,1\right\}$ in the following way:
\begin{align}
	d^{\text{left}}_C(x) &= \begin{cases}
		1 & C\text{ contains some punctured left half-neighborhood of }x \\
		0 & C\text{ is disjoint to some punctured left half-neighborhood of }x
	\end{cases} \\
	d^{\text{right}}_C(x) &= \begin{cases}
		1 & C\text{ contains some punctured right half-neighborhood of }x \\
		0 & C\text{ is disjoint to some punctured right half-neighborhood of }x
	\end{cases} \\
	\label{eq:simple-set-boundary-orientation}
	d_C(x) &= d^{\text{left}}_C(x) - d^{\text{right}}_C(x)
\end{align}

See figure \ref{fig:dvalues} for a demonstration of the value of $d_C(p)$ depending on how $C$ looks in a neighborhood of $p$. Note that $d_C(p)$ does not depend on whether $p\in C$ or $p\notin C$.

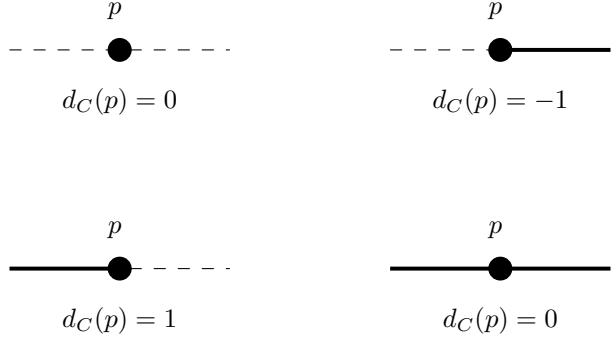
\begin{figure}
	\centering
\begin{tikzpicture}[x=0.75pt,y=0.75pt,yscale=-1,xscale=1]
	\def\horspace{190}
	\def\verspace{110}
	

	\def\pointx{130} 
	\def\pointy{42} 
	\def\pointr{5}
	
	\def\linelen{50}
	
	\def\textdx{130}
	\def\textdy{60}
	
	\def\textpx{123}
	\def\textpy{17}
	
	\draw [fill={rgb, 255:red, 0; green, 0; blue, 0 }  ,fill opacity=1 ][line width=1.5] (\pointx+\pointr,\pointy) arc (0:360:\pointr);
	\draw  [dash pattern={on 4.5pt off 4.5pt}]  (\pointx+\pointr,\pointy) -- (\pointx+\pointr+\linelen,\pointy) ;
	\draw  [dash pattern={on 4.5pt off 4.5pt}]  (\pointx-\pointr-\linelen,\pointy) -- (\pointx-\pointr,\pointy) ;
	
	\draw [fill={rgb, 255:red, 0; green, 0; blue, 0 }  ,fill opacity=1 ][line width=1.5] (\pointx+\pointr+\horspace,\pointy) arc (0:360:\pointr);
	\draw  [line width=1.5] (\pointx+\pointr+\horspace,\pointy) -- (\pointx+\pointr+\linelen+\horspace,\pointy) ;
	\draw  [dash pattern={on 4.5pt off 4.5pt}]  (\pointx-\pointr-\linelen+\horspace,\pointy) -- (\pointx-\pointr+\horspace,\pointy) ;
	
	\draw [fill={rgb, 255:red, 0; green, 0; blue, 0 }  ,fill opacity=1 ][line width=1.5] (\pointx+\pointr,\pointy+\verspace) arc (0:360:\pointr);
	\draw  [dash pattern={on 4.5pt off 4.5pt}]  (\pointx+\pointr,\pointy+\verspace) -- (\pointx+\pointr+\linelen,\pointy+\verspace) ;
	\draw   [line width=1.5] (\pointx-\pointr-\linelen,\pointy+\verspace) -- (\pointx-\pointr,\pointy+\verspace) ;
	
	\draw [fill={rgb, 255:red, 0; green, 0; blue, 0 }  ,fill opacity=1 ][line width=1.5] (\pointx+\pointr+\horspace,\pointy+\verspace) arc (0:360:\pointr);
	\draw  [line width=1.5] (\pointx+\pointr+\horspace,\pointy+\verspace) -- (\pointx+\pointr+\linelen+\horspace,\pointy+\verspace) ;
	\draw  [line width=1.5]  (\pointx-\pointr-\linelen+\horspace,\pointy+\verspace) -- (\pointx-\pointr+\horspace,\pointy+\verspace) ;

	\draw (\textpx,\textpy) node [anchor=north west][inner sep=0.75pt]   [align=left] {$p$};
	\draw (\textdx,\textdy) node [anchor=north][inner sep=0.75pt]   [align=center] {$ d_C(p)=0$};
	\draw (\textpx+\horspace,\textpy) node [anchor=north west][inner sep=0.75pt]   [align=left] {$p$};
	\draw (\textdx+\horspace,\textdy) node [anchor=north][inner sep=0.75pt]   [align=center] {$d_C(p)=-1$};
	\draw (\textpx,\textpy+\verspace) node [anchor=north west][inner sep=0.75pt]   [align=left] {$p$};
	\draw (\textdx,\textdy+\verspace) node [anchor=north][inner sep=0.75pt]   [align=center] {$d_C(p)=1$};
	\draw (\textpx+\horspace,\textpy+\verspace) node [anchor=north west][inner sep=0.75pt]   [align=left] {$p$};
	\draw (\textdx+\horspace,\textdy+\verspace) node [anchor=north][inner sep=0.75pt]   [align=center] {$d_C(p)=0$};

\end{tikzpicture}
\caption{Values of $d_C(p)$ given how $C$ looks like in a neighborhood of $p$. Here a full line represents points that belong to $C$; a dashed line represents points that do not belong to $C$. 
}
\label{fig:dvalues}
\end{figure}

In particular if $x\notin\partial C$ or $x\notin\partial\left(C\Delta\left\{x\right\}\right)$ (here $\Delta$ denotes the symmetric set difference operation) then $d_C(x)=0$.
The value of $d_C(x)$ can be thought of as the orientation of $\partial C$ at $x$, similarly to the boundary map from $1$-dimensional chains to $0$-dimensional chains in singular homology.

\begin{lemma}\label{lemma:resolving-homology}
	Let $X$ be a blueprint, and let $F$ be a map from the set of simple subset of $X$ to $\RR$, which satisfies the following conditions:
	\begin{enumerate}
		\item \label{condition:resolving-homology:clopen-to-zero} If $C$ is clopen then $F(C)=0$.
		\item \label{condition:resolving-homology:finite-to-zero} If $C$ is finite then $F(C) = 0$.
		\item \label{condition:resolving-homology:additivity} If $C_1,C_2$ are two disjoint simple subsets then $F(C_1\cup C_2) = F(C_1) + F(C_2)$.
		\item \label{condition:resolving-homology:positivity} (positivity) If $d_C(X) = \left\{0,1\right\}$ then $F(C) > 0$.
		\item \label{condition:resolving-homology:continuity} (continuity) If $\left(I_n\right)_n\subset \RR$ is an increasing chain of intervals in $\RR$ with limit $\bigcup_n I_n = I$ and $C$ is a simple subset of $X$ then $\lim_{n\to\infty} F(C\cap \pi^{-1}(I_n)) = F(C\cap \pi^{-1}(I))$.
	\end{enumerate}
	Then there is a positive function $\phi:X\to\left(0,\infty\right)$ such that for any simple subset $C\subset X$,
	\begin{equation}\label{eq:setfunc2pointfunc}
		F(C) = \sum_{x} \phi(x) d_C(x)
	\end{equation}
\end{lemma}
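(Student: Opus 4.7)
The plan is to construct $\phi$ by viewing $F$ as a discrete analog of an exact $1$-form on $X$ and by realizing $\phi$ as a function whose pairing with the boundary data $d_C$ recovers $F(C)$. I proceed in three stages.

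Stage 1 (factorization of $F$ through $d$). First I would prove the auxiliary claim that $F(E)=0$ for any simple $E$ with $d_E\equiv 0$: the finiteness of $\partial E$ combined with $d_E(p)=0$ at each $p\in\partial E$ forces $p$ to be either an isolated point of $E$ or an isolated hole in $E$, so after finitely many modifications $E$ becomes a clopen set, and conditions \ref{condition:resolving-homology:clopen-to-zero}--\ref{condition:resolving-homology:additivity} give $F(E)=0$. A pointwise case analysis using the identity $d^{\text{left}}_{A\setminus B}=d^{\text{left}}_A(1-d^{\text{left}}_B)$ (and its analogue on the right) then shows that $d_{D_1}=d_{D_2}$ implies both $d_{D_1\setminus D_2}$ and $d_{D_2\setminus D_1}$ vanish identically, and additivity of $F$ yields $F(D_1)=F(D_2)$. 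Hence $F$ descends to a well-defined $\mathbb{Z}$-linear functional $G$ on the image of $d$ in the finitely-supported integer-valued functions on $X$.

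Stage 2 (building blocks for each equivalence class). Let $\sim$ denote the intersection of the relations $\sim_{-}$ and $\sim_{+}$, so that $p\sim q$ iff $p\sim_{-}q$ and $p\sim_{+}q$. Because $d^{\text{left}}_C$ is constant on $\sim_{-}$-classes and $d^{\text{right}}_C$ is constant on $\sim_{+}$-classes, the boundary datum $d_C$ is constant on each $\sim$-class. For each $\sim$-class $K\subseteq X$ I would construct a simple subset $D_K\subseteq X$ whose $d$-data is the indicator $\mathbf{1}_K$, roughly by collecting the connected components of $\pi^{-1}((-\infty,\pi(K)))$ whose closures contain $K$, adjoining $K$ itself, and then iteratively ``closing off'' any undesired boundary contributions at singular points by appending their appropriate pieces above $\pi(K)$. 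The cascade terminates because $\sing{X}$ is finite and $\pi(X)$ is bounded, and continuity (condition \ref{condition:resolving-homology:continuity}) is used to pass to the limit at the lower boundary of $\pi(X)$. Since $d_{D_K}(X)\subseteq\{0,1\}$ and takes the value $1$ on $K$, condition \ref{condition:resolving-homology:positivity} gives $F(D_K)>0$.

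Stage 3 (definition of $\phi$ and verification). Define $\phi:X\to(0,\infty)$ by distributing the positive real number $F(D_K)$ among the finitely many members of each $\sim$-class $K$ --- for instance, by setting $\phi(q):=F(D_K)/|K|$ for each $q\in K$. Then $\phi$ is positive pointwise, and the representation formula follows from Stage 1: since $d_C=\sum_K d_C(K)\,\mathbf{1}_K$ as a $\mathbb{Z}$-linear combination of the building blocks produced in Stage 2, applying $G$ gives $F(C)=\sum_K d_C(K)\,F(D_K)=\sum_K d_C(K)\sum_{q\in K}\phi(q)=\sum_q \phi(q)\,d_C(q)$, which is the required formula.

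The main obstacle is Stage 2: the construction of $D_K$ at a $\sim$-class $K$ contained in $\sing{X}$ requires a careful iterative extension through the branching structure of $X$, and justifying that the procedure terminates and yields a simple set with the prescribed $d$-data is the most intricate part of the argument; the finiteness of $\sing{X}$, the boundedness of $\pi(X)$, and the continuity axiom \ref{condition:resolving-homology:continuity} are all essential here.
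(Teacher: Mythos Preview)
Your Stage~1 is essentially the same factorization step the paper carries out. The difficulty is Stage~2: it is \emph{not} true in general that for every $\sim$-class $K$ there exists a simple set $D_K$ with $d_{D_K}=\mathbf{1}_K$.

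Here is a concrete counterexample. Let $X$ be the ``diamond'' blueprint: one regular branch $R_0$ on $(-a,0)$, splitting at $\pi=0$ into two singular points $p_1,p_2$ with $p_1\sim_-p_2$, then two disjoint regular branches $R_1,R_2$ on $(0,1)$ attached to $p_1,p_2$ respectively, merging at $\pi=1$ into singular points $p_3,p_4$ with $p_3\sim_+p_4$, and finally one regular branch $R_3$ on $(1,b)$. Each $p_i$ is its own $\sim$-class. If $d_D=\mathbf{1}_{\{p_1\}}$ then the condition $d_D(p_1)=1$ forces $R_0\subset D$ and $R_1\cap D=\emptyset$ near $p_1$; then $d_D(p_2)=0$ forces $R_2\subset D$; then $d_D(p_4)=0$ forces $R_3\subset D$; then $d_D(p_3)=0$ forces $R_1\subset D$, contradicting the first step. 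Equivalently, with $V_0$ the free space on points of $X$ and $H=V_0/d(V_1)$, one computes $\dim H=2$ with basis the classes of $[p_1]$ and $[p_2]$, so $[p_1]=\mathbf{1}_{\{p_1\}}\notin\operatorname{im}d$. Your cascade ``append pieces above $\pi(K)$ to close off unwanted boundary'' loops around the diamond and never terminates; the finiteness of $\sing X$ and boundedness of $\pi(X)$ do not prevent this.

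The paper circumvents this obstruction by \emph{not} attempting to realize each $\mathbf{1}_K$ individually. Instead it chooses any lift $\widetilde F:V_0\to\RR$ with $F=\widetilde F\circ d$ and then seeks a correction $g\in H^\ast$ so that $\phi:=\widetilde F+g\circ j$ is pointwise positive. This is an (infinite) linear system of strict inequalities in the finite-dimensional unknown $g$; the positivity hypothesis guarantees every finite subsystem is solvable, and the continuity hypothesis is used precisely to show that the full system is implied by a finite subsystem (handling the classes $h\in H$ where $\sup_{x\in j^{-1}(h)}(-\widetilde F([x]))$ is not attained). Your approach would succeed only when $H=0$, which need not hold.
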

Note that the RHS of \eqref{eq:setfunc2pointfunc} is just a signed sum of $\phi$ over $\partial C$, similar to the fundamental theorem of calculus. Indeed, for $X=\RR$ and $C=\left[a,b\right]\subset\RR$, \eqref{eq:setfunc2pointfunc} reduces to $F(\left[a,b\right]) = \phi(b)-\phi(a)$.
\begin{proof}
	We may assume that $X$ is connected. As the intersection of two simple subsets is simple, we may think of $F$ as a map from simple functions to $\RR$, where we define a simple function $f:X\to\RR$ to be a function such that $f(X)$ is finite and for any $t\in\RR$, $f^{-1}(t)\subset X$ is simple, in the following way:
	\[
	F(f) = \sum_{t\in f(X)} tF(f^{-1}(t))
	\]
	
	The conditions on $F$ imply that $F$ is linear on simple functions: $F(f+g) = F(f) + F(g)$.
	
	Let $V_0$ be the free $\RR$-vector space on the points of $X$ and let $V_1$ be the vector space of simple functions $X\to\RR$. Let $d:V_1\to V_0$ be the map
	\[
	d(f) = \sum_{t\in f(X)}\sum_{x} t d_{f^{-1}(t)}(x) \left[x\right]
	\]
	where $\left[x\right]\in V_0$ is the generator of $V_0$ corresponding to $x\in X$.
	Then $d:V_1\to V_0$ is a homomorphism. Let $j:V_0\to H = V_0 / d(V_1)$ be its cokernel.
	Note that the conditions on $F$ imply that if $d(f)=0$ then $F(f) = 0$. Therefore $F$ factors through $d$: there is some linear functional $\widetilde F:V_0\to \RR$ such that $F = \widetilde F\circ d$. Note that \eqref{eq:setfunc2pointfunc} is equivalent to $F = \phi\circ d$, therefore if $\phi$ exists it must be the case that $\phi - \widetilde F$ factors through $j$: for some linear functional $g:H\to \RR$, $\phi - \widetilde F = g \circ j$.
	
	Therefore it is enough to find a linear functional $g:H\to\RR$ such that for any $x\in X$,
	\begin{equation}\label{eq:pointinequality}
		g\left(j\left(\left[x\right]\right)\right) > -\widetilde F\left(\left[x\right]\right) 
	\end{equation}
	This is effectively an infinite system of linear inequalities where the unknown vector is $g\in H^\ast$. Note that the positivity condition on $F$ implies that any finite subset of this system is solvable, therefore it is enough to show that there is a finite subset of this system which implies all of the inequalities in the system.
	
	Note that $H$ is finite-dimensional, and in fact the set $K=\left\{j\left(\left[x\right]\right):x\in X\right\}\subset H$ spans $H$ and is finite (because the map $x\mapsto j\left(\left[x\right]\right)$ is constant on each connected component of $X\setminus\sing{X}$).
	For each $h\in K$ let 
	\[
	X(h)=\left\{x\in X : j\left(\left[x\right]\right)=h\right\}
	\]
	Then \eqref{eq:pointinequality} becomes
	\begin{equation}\label{eq:pointinequality2}
		\forall h\in K, x\in X(h): g(h) > -\widetilde F\left(\left[x\right]\right)
	\end{equation}
	For $h\in K$ such that $\max_{x\in X(h)} -\widetilde F\left(\left[x\right]\right)$ exists (say with $x=x_0$), we are done, as the other instances of \eqref{eq:pointinequality2} involving $h$ are implied from the instance where $x=x_0$.
	
	For $h\in K$ such that $\max_{x\in X(h)} -\widetilde F\left(\left[x\right]\right)$ does not exist, let $x_n\in X(h)$ be a sequence of points such that
	\[
	\lim_{n\to\infty}-\widetilde F\left(\left[x_n\right]\right)
	=
	\sup_{x\in X(h)} -\widetilde F\left(\left[x\right]\right)
	.
	\]
	By passing to a subsequence we may assume that $x_n\notin \sing{X}$, that $\pi\left(x_n\right)$ is monotonic (either decreasing or increasing), and that all of $x_n$ are on the same connected component $R$ of $X\setminus\sing{X}$. Without loss of generality assume that $\left(\pi(x_n)\right)_n\subset\RR$ is increasing (the case where it is decreasing is similar). We have an increasing chain of open intervals $I_n = \left(\pi(x_1),\pi(x_n)\right)\subset \RR$ with limit $I=\left(\pi(x_1),\sup_n \pi(x_n)\right)$. Applying the continuity condition with the simple subset $R$ and the chain of intervals $I_n$, we get that
	\begin{multline*}
	\lim_{n\to\infty}\widetilde F\left(\left[x_n\right]\right)
	-
	\widetilde F\left(\left[x_1\right]\right)
	=
	\lim_{n\to\infty}F\left(R\cap\pi^{-1}(I_n)\right)
	=\\=
	F\left(R\cap\pi^{-1}(I)\right)
	=
	\sum_{\substack{y\in X
		\\ x_n\to y}} 
	\widetilde F\left(\left[y\right]\right)
	-
	\widetilde F\left(\left[x_1\right]\right)
	.
	\end{multline*}
	Therefore we have
	\begin{align*}
		-
		\sum_{\substack{y\in X
				\\ x_n\to y}} 
		\widetilde F\left(\left[y\right]\right)
		&=
		\sup_{x\in X(h)} -\widetilde F\left(\left[x\right]\right) \\
			\sum_{\substack{y\in X
				\\ x_n\to y}} 
		 j\left(\left[y\right]\right)
		&=
		h
		.
	\end{align*}
	As each $y\in X$ such that $x_n\to y$ must be in $\sing{X}$, we get that the instance of \eqref{eq:pointinequality2} for our choice of $h$ follows from the (finitely-many) instances of \eqref{eq:pointinequality2} for $x\in\sing{X}$.
	This completes the proof of lemma \ref{lemma:resolving-homology}.
\end{proof}

\section{Construction of metric - Proof of theorem \ref{thm:interpolation-metric}}
Let $M$ be a smooth orientable surface with a given smooth measure $\mu$, and $f : M\to\RR$ a smooth function without critical points. Suppose that there is an open set $U\subset M$ and a Riemannian metric $g_U$ on $U$ compatible with $\mu$ such that $M\setminus U$ is compact and $\left.f\right|_U$ is a Laplacian eigenfunction of $g_U$ with eigenvalue $-\lambda$. Suppose also that the given $\mu$ is admissible (as defined in the beginning of section \ref{section:interpolation-formulation}).

We will prove that there is a metric $g$ on $M$, equal to $g_U$ on some $U'\subset M$ with $M\setminus U'$ compact, for which $f$ is a Laplacian eigenfunction with eigenvalue $-\lambda$.

Given a smooth curve $\gamma\subset M$ such that $f\circ\gamma$ is constant, we denote by $\mu_f$ the measure on $\gamma$ induced from $\mu$ and $f$ in the following way: for any smooth $\phi:M\to\RR$ we have
\[
\int_\gamma \phi d\mu_f = \int_\gamma u_\phi\cdot d\hat{\mathbf{n}} 
,
\]
where $u_\phi$ is a vector field on a neighborhood of $\gamma$ for which $\partial_{u_\phi} f=\phi$. Note that this does not depends on the exact choice of $u_\phi$; and the RHS is defined as in section \ref{section:notation}. For example, given a coordinate system $x,y$ with $f(x,y)=\alpha y$ (where $\alpha$ is a constant), an arbitrary Riemannian metric $g$, a curve $\gamma(t)=\left(\gamma_x(t),0\right)$ with image $\left[0,L\right]\times\left\{0\right\}$, and a smooth function $\phi$, we have $u_\phi = \psi\partial_x + \alpha^{-1}\phi\partial_y$ for an arbitrary smooth function $\psi$ and so
\begin{multline*}
\int_\gamma \phi d\mu_f = \int_\gamma u_\phi\ddn = \int\left(\alpha^{-1}\phi(\gamma(t))\right)\gamma_x'(t)\sqrt{\left|g(\gamma(t))\right|}dt
=\\=
\int_0^L \phi(t,0)\alpha^{-1}\sqrt{\left|g(t,0)\right|}dt
\end{multline*}

Given two sets $R,U\subset M$ such that $\partial R\setminus U$ is composed of smooth curves on which $f$ is locally constant, we will use the notation
\[
\int_{\substack{\partial R\setminus {U}  \\ \text{(oriented)} }} \phi d\mu_f
\]
for integrating the function $\phi$ using the measure $\mu_f$ over the smooth curves of $\partial R\setminus U$ such that each curve $\gamma\subset \partial R\setminus U$ is oriented positively iff $df$ points on $\gamma$ to outside $R$. Continuing the example from the previous paragraph, if $U=\RR^2\setminus\left(\left[0,L\right]\times\RR\right)$ and $R=\RR\times\left[a,b\right]$ 
then
\[
\int_{\substack{\partial R\setminus {U}  \\ \text{(oriented)} }} \phi d\mu_f
=
\int_{\left[0,L\right]\times\left\{b\right\}} \phi d\mu_f
-
\int_{\left[0,L\right]\times\left\{a\right\}} \phi d\mu_f
\]

By lemma \ref{lemma:pointwise}, specifying a Riemannian metric $g$ on $M$ is the same as specifying the vector field $u = \nabla_g f$ which is defined by $u^i = g^{ij}\partial_j f$.
In order for a smooth vector field $u$ to correspond to a Riemannian metric which eventually agrees with $g_U$ and for which $f$ is a Laplacian eigenfunction, $u$ needs to satisfy the following conditions:
\begin{enumerate}
	\item $\partial_u f > 0$,
	\item $\nabla\cdot u = -\lambda f$,
	\item outside a compact set, $u$ agrees with $\nabla_{g_U} f$.
\end{enumerate}

First, we will find a function $h$ that would be $h=\partial_u f = \left\|\nabla_g f\right\|_g^2$. This function should satisfy the following conditions, for some $U'\subset U$ with $M\setminus U'$ compact:
\begin{enumerate}
	\item $h:M\to\left(0,\infty\right)$ is smooth and positive
	\item On $M\setminus U'$, $h$ agrees with $\left\|\nabla_{g_U} f\right\|_g^2$
	\item For any compact set $R\subset M$ for which $\partial R$ is a finite disjoint union of piecewise-smooth simple closed curve and $f$ is locally constant on $\partial R \setminus U'$, we have
	\[
	\int_{\partial R\cap U'} \left(\nabla_{g_U} f\right)\ddn
	+
	\int_{\substack{\partial R\setminus U' \\ \text{(oriented)}}} h d\mu_f
	=
	- \int_R \lambda f d\mu
	\]
\end{enumerate}

Let $U_1,U_2,U_3$ be open sets with $U_1 \subset \overline{U_1}\subset U_2 \subset \overline{U_2}\subset U_3 \subset \overline{U_3}\subset U$ and $M\setminus U_i$ compact. Assume also that $M\setminus U_1$ is a manifold with boundary; and that $\left. f\right|_{\partial U_1}$ has finitely many local extremal points.

For any $p\in M\setminus\overline{U_1}$, let $V_p\subset M\setminus\overline{U_1}$ be a neighborhood of $p$ such that the map ${\left. f\right|_{V_p}:V_p\to f(V_p)}$
factors as the composition of a diffeomorphism $V_p\xrightarrow{\sim}f(V_p)\times \left(0,1\right)$ and the projection ${f(V_p)\times\left(0,1\right)\to f(V_p)}$.
Then $\left(V_p\right)_{p\in M\setminus\overline{U_1}}$ is an open cover of $M\setminus\overline{U_1}$, so there is a finite subcover $\left(V_k\right)_{k\leq N}$ of $M\setminus U_2$. 
Let $\chi:M\to\left[0,1\right]$ and $\chi_k:M\to\left[0,1\right]$ for $k\leq N$ such that $\left\{\chi, \chi_k:k\leq N\right\}$ is a smooth partition of unity (i.e. $\chi+\sum_k \chi_k = 1 $), with $\chi_k^{-1}(0) = M\setminus V_k$ and $\chi(U_1) = 1$, $\chi(M\setminus U_3) = 0$.

We will have $h(x) = \chi(x)\left\|\nabla_{g_U} f(x)\right\|^2 + \phi(x)$ where $\phi:M\to\left[0,\infty\right)$ is a smooth nonnegative function supported in $M\setminus U_1$ and positive in $M\setminus U_3$. This $\phi$ should also satisfy the following condition: given a compact set $R\subset M$ for which $\partial R$ is a finite disjoint union of piecewise smooth simple closed curve and $f$ is locally constant on $\partial R \setminus \overline{U_1}$, we have
\begin{equation}\label{eq:phiintdef}
	\int_{\substack{\partial R\setminus {U_1}  \\ \text{(oriented)} }} \phi d\mu_f = 
	-\int_R \lambda fd\mu -
	\int_{
		\partial R
	}\left(\chi\nabla_{g_U} f\right)\ddn 
\end{equation}

Let $\pi: X\to\RR$ be the blueprint $(M\setminus\overline{U_1}) / \sim_f$ (see lemma \ref{lemma:quotient-is-blueprint}), and let $\xi:M\setminus\overline{U_1}\to X$ be the quotient map.
Given a simple subset $C\subset X$, define
\begin{multline}\label{eq:blueprint-chain}
	F(C) = - \int_R \lambda f d\mu
	- \int_{
		\partial R
	} \left(\chi\nabla_{g_U} f\right)\ddn
	= \\ =
	- \int_R \lambda f d\mu
	- \int_{
		\partial R\cap\overline{U_1}
	} \left(\chi\nabla_{g_U} f\right)\ddn
	-\int_{
		\substack{\partial R\setminus\overline{U_1}
			\\
			\text{(oriented)}
		}
	} \chi d\mu_f
\end{multline}
for any subset $R\subset M$ with piecewise-smooth boundary and which satisfy ${R\setminus U_1 = \xi^{-1}(C)}$. Note that $F(C)$ does not depends on the exact choice of $R$.

Note that $F$ satisfies the conditions of lemma \ref{lemma:resolving-homology} - conditions \ref{condition:resolving-homology:finite-to-zero}, \ref{condition:resolving-homology:additivity}  are obvious; conditions \ref{condition:resolving-homology:clopen-to-zero}, \ref{condition:resolving-homology:positivity} follows from $\mu$ being admissible; and condition \ref{condition:resolving-homology:continuity} follows from the continuity of the integrands in \eqref{eq:blueprint-chain}.
Therefore, there is a positive function $\phi_X:X\to\RR$ such that
\[
F(C) = \sum_{x\in\partial C} \phi_X(x) d_{C}(x)
,
\]
where $d_C(x)$ is the orientation of $\partial C$ at $x$ as defined in \eqref{eq:simple-set-boundary-orientation}.

Note that $\phi_X$ is pseudosmooth.
Indeed, let $J\subset \RR$ be a closed interval, let $W\subset X$ be a connected component of $\pi^{-1}(J)$, and let $R\subset M$ be a piecewise-smooth region with ${R\setminus U_1 = \xi^{-1}(W)}$. Then the function
\[
\widetilde{\phi}(t) = \sum_{x\in W\cap\pi^{-1}(t)} \phi_X(x)
\]
satisfy for any $t_1,t_2\in\RR$ with $\left[t_1,t_2\right]\subset J$
\begin{multline*}
	\widetilde{\phi}(t_2)-\widetilde{\phi}(t_1) = F(W\cap\pi^{-1}(\left[t_1,t_2\right]))
	=\\=
	-\int_{R\cap f^{-1}(\left[t_1,t_2\right])} \lambda fd\mu
	-\int_{\partial(R\cap f^{-1}(\left[t_1,t_2\right]))}\left(\chi\nabla_{g_U} f\right)\ddn
\end{multline*}
with the right hand side being smooth in $t_1,t_2$.

For $k\leq N$ let $I_k = \xi(V_k) \subset X$, and let $\widehat \chi_k :I_k\to\RR$ defined by
\[
\widehat\chi_k (x) = \int_{\xi^{-1}(x)} \chi_k d\mu_f
.
\]

Then by lemma \ref{lemma:function-decomposition} there are positive strictly-smooth functions $\phi_k:I_k\to\RR$ such that
\[
\sum_k \phi_k(x)\widehat\chi_k(x) = \phi_X(x)
\]
(in order to apply lemma \ref{lemma:function-decomposition} literally, one should need to restrict to the subblueprint ${X'=\xi\left(\bigcup_k V_k\right) = \bigcup_k I_k}$ and note that near the ends of $X'$ we have $\sum_k\widehat\chi_k = \phi_X$)

Then one can see that the following definition for $\phi:M\to\RR$, $h:M\to\RR$ works:
\[
\phi(p) = \sum_k \phi_k(\xi(p)) \chi_k(p)
\]
\[
h(p) = \chi(p)\left\|\nabla_{g_U} f(p)\right\|^2 + \sum_k \phi_k(\xi(p))\chi_k(p)
\]
Indeed, $h$ is positive and smooth on $M$ and $h=\left\|\nabla_{g_U} f\right\|^2$ on $U_1$. Additionally, for any piecewise-smooth compact region $R\subset M$ with $\left.f\right|_{\partial R\setminus U_1}$ locally constant we have
\begin{multline*}
	\int_{\substack{\partial R\setminus U_1\\ \text{(oriented)}}}h d\mu_f =\\=
	\int_{\substack{\partial R\setminus U_1\\ \text{(oriented)}}}\left(\chi\nabla_{g_U} f\right)\ddn + \int_{\substack{\partial R\setminus U_1\\ \text{(oriented)}}} \sum_k \phi_k(\xi(p))\chi_k(p) d\mu_f(p)
	=\\=
	\int_{\substack{\partial R\setminus U_1\\ \text{(oriented)}}}\left(\chi\nabla_{g_U} f\right)\ddn +
	\sum_{x\in\xi(\partial R\setminus U_1)}\sum_k \phi_k(x)\widehat{\chi}_k(x) d_{\xi(R\setminus U_1)}(x)
	=\\=
	\int_{\substack{\partial R\setminus U_1\\ \text{(oriented)}}}\left(\chi\nabla_{g_U} f\right)\ddn +
	\sum_{x\in\partial(\xi( R\setminus U_1))}\phi_X(x) d_{\xi(R\setminus U_1)}(x)
	=\\=
	\int_{\substack{\partial R\setminus U_1\\ \text{(oriented)}}}\left(\chi\nabla_{g_U} f\right)\ddn +
	F(\xi( R\setminus U_1))
	=\\=
	-\int_R \lambda f d\mu - \int_{\partial R\cap \overline{U_1}}\left(\nabla_{g_U} f\right)\ddn
	.
\end{multline*}
Therefore we get that for any vector field $u$ which agree on $U_1$ with $\nabla f$ and satisfy $\partial_u f = h$ and for any $R\subset M$ as above, we have
\begin{equation}\label{eq:contour-integral}
	\int_{\partial R} u\ddn = -\int_R \lambda fd\mu
	.
\end{equation}
The only thing remaining is to find such vector field $u$ such that \eqref{eq:contour-integral} would hold for any piecewise-smooth compact region $R\subset M$, even when $\left.f\right|_{\partial R\setminus U_1}$ is not locally constant.
The conditions on $u$ are:
\begin{enumerate}
	\item $\partial_u f = h$ on $M$
	\item $u$ agrees with $\nabla_{g_U} f$ on $U_1$ (or on arbitrary open subset of $U_1$ with compact complement with respect to $M$)
	\item $\nabla\cdot u = -\lambda f$. Equivalently, for any compact $R\subset M$ with piecewise smooth boundary,
	\[
	\int_{\partial R} u\ddn = -\int_R \lambda fd\mu
	\]
\end{enumerate}

In the following say that a smooth curve $\gamma$ is \emph{transverse to }$f$ if $\left(f\circ\gamma\right)'\neq 0$.
For any set $W\subset M$, define
\[
E(W) = \left\{p\in M:
\substack{
	\exists\text{a path }\gamma\text{ from }p\text{ to some }q\in W\text{ such that }f\circ\gamma\text{ is constant; and}
	\\
	\exists\text{a smooth curve }\gamma'\subset W\text{ passing through }q\text{ and transverse to }f
}
\right\}\subset M
\]
\begin{claim}
	\label{claim:sideway-integration}
	Given $h:M\to\RR$ and a definition of $u$ on some $W\subset M$, $u$ can be extended uniquely to $E(W)$, such that the above conditions on $u$ are satisfied.
\end{claim}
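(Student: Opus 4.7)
The plan is to use local flow-box coordinates for $f$ and recast the problem as a first-order linear ODE along level sets, with initial data supplied by the transversal curve in $W$. Fix $p\in E(W)$, witnessed by a level-set path $\gamma:[0,1]\to M$ with $\gamma(0)=p$, $\gamma(1)=q\in W$, and a smooth transversal curve $\gamma'\subset W$ through $q$. Since $f$ has no critical points, by the implicit function theorem a neighborhood of $\gamma([0,1])$ can be covered by finitely many coordinate charts in which $f(x,y)=x$. In each such chart write $\mu=F\,dx\,dy$ (with $F>0$ smooth) and $u=a\,\partial_x+b\,\partial_y$. Then $\partial_u f=a$, so the first required condition pins down $a=h$. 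Under the same coordinates,
\begin{equation*}
\nabla\cdot u \;=\; \frac{1}{F}\bigl(\partial_x(Fa)+\partial_y(Fb)\bigr),
\end{equation*}
so the third condition $\nabla\cdot u=-\lambda f$ becomes the first-order linear ODE
\begin{equation*}
\partial_y(Fb) \;=\; -\lambda f F - \partial_x(F h)
\end{equation*}
for the unknown $Fb$ along the level set $\{x=\mathrm{const}\}$.

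The transversality of $\gamma'$ to $f$ means that, for $y$-values on the level set through $q$ that lie on $\gamma'$, the value of $b$ at $q$ is already prescribed by the given $u$ on $W$. This furnishes a single initial condition for the ODE at $q$. Integrating along $\gamma$ — patching between charts via the change of coordinates, which is unproblematic because both the equation $\nabla\cdot u=-\lambda f$ and the equation $\partial_u f=h$ are coordinate-invariant — produces a unique value of $u$ at $p$. Uniqueness is then automatic: any vector field satisfying the three conditions in a neighborhood of $\gamma$ solves the same ODE with the same initial value at $q$, hence must coincide with our extension at $p$.

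What remains is to confirm that the construction does not depend on the witness $(q,\gamma,\gamma')$. If $(q_1,\gamma_1,\gamma'_1)$ and $(q_2,\gamma_2,\gamma'_2)$ are two such witnesses for the same $p$, then $q_1$ and $q_2$ lie in the same connected component of $f^{-1}(f(p))$ that meets $W$ along a transversal curve, and the concatenation of $\gamma_1$ with the reverse of $\gamma_2$ is a level-set path from $q_1$ to $q_2$. Along this path, integrating our ODE with initial datum $u(q_1)$ must reproduce $u(q_2)$, because the given $u$ on $W$ is assumed to satisfy $\partial_u f=h$ and $\nabla\cdot u=-\lambda f$ wherever it is defined — i.e.\ it itself solves the ODE on any portion of the level set contained in $W$. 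Hence the two extensions agree at $p$.

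The main technical obstacle is the patching across charts and the consistency check across witnesses, but both reduce to the fact that the ODE for $Fb$ is forced by the coordinate-invariant relations $\partial_u f=h$ and $\nabla\cdot u=-\lambda f$, so different charts yield the same solution on overlaps and different transversal starting points on the same level set give the same value downstream. Everything else — smoothness of $h$, positivity of $F$, finiteness of the chart cover — is standard and makes the ODE solution smooth, completing the extension to all of $E(W)$.
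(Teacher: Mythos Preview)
Your approach---flow-box coordinates, reduce to a first-order linear ODE for the level-set component of $u$, integrate from a point of $W$---is exactly what the paper does (the paper writes $u=u_0-u_1$ with $u_1$ tangent to level sets, but this is the same calculation). The gap is in your consistency check.

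You argue that if $(q_1,\gamma_1,\gamma'_1)$ and $(q_2,\gamma_2,\gamma'_2)$ are two witnesses for the same $p$, then integrating the ODE along the concatenated level-set path from $q_1$ through $p$ to $q_2$ must reproduce $u(q_2)$, ``because the given $u$ on $W$ \ldots\ solves the ODE on any portion of the level set contained in $W$.'' But that concatenated path is \emph{not} contained in $W$: it passes through $p\notin W$. Knowing that $u$ solves the ODE on $W$ tells you nothing about what the ODE solution does while it is outside $W$. If the level-set component $C$ meets $W$ in two or more arcs (or if $C$ is a circle), there is no reason, from what you have written, that the values propagated from the different arcs should agree at $p$.

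The missing ingredient is the integral identity \eqref{eq:contour-integral}, which is exactly the property that $h$ was constructed to satisfy: for any region $R$ whose boundary outside $U_1$ lies on level sets of $f$, one has $\int_{\partial R} u\cdot d\hat{\mathbf n}=-\int_R\lambda f\,d\mu$ for \emph{any} $u$ with $\partial_u f=h$ agreeing with $\nabla_{g_U}f$ on $U_1$. The paper applies this to a thin strip around an arc of $C$ between two points of $U_1\cap C$ to conclude (equation \eqref{eq:global-integral-welldefined}) that $\int_{\gamma_0}(\nabla\cdot u_0+\lambda f)\,d\mu_f=0$ for any such arc $\gamma_0$, which is precisely the statement that the ODE propagation from one arc of $C\cap W$ to another is consistent with the prescribed data. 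Your argument needs to invoke this global constraint; the local fact that $u$ solves the ODE on $W$ is not enough.
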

\begin{proof}[claim \ref{claim:sideway-integration}]
	Let $p\in E(W)$, we will show how to define $u(p)$.
	 Let $C \ni p$ be the connected component of the level set of $f$ containing $p$, and let $u_0$ be an arbitrary vector field in a neighborhood of $C$ such that $\partial_{u_0}f=h$ and $u_0$ agrees with $\nabla f$ on $U_1$. Then on $C$ the function $\nabla\cdot u_0+\lambda f$ is compactly-supported and by \eqref{eq:contour-integral} we have
	\begin{equation}\label{eq:global-integral-welldefined}
	\int_{\gamma_0} \left(\nabla \cdot u_0+\lambda f\right)d\mu_f = 0
	\end{equation}
	for any curve $\gamma_0$ in $C$ connecting two points in $U_1\cap C$.
	We claim that as a result of that, there is a smooth vector field $u_1$ on a neighborhood of $C$, and the restriction $\left. u_1\right|_C$ is unique, which satisfies the following conditions:
	\begin{enumerate}
		\item $u_1$ is tangent to the level lines of $f$;
		\item $u_1$ vanishes on $U_1\cap C$;
		\item The following equation holds on $C$:
		\begin{equation}\label{eq:to-be-integrated}
			\nabla\cdot u_1 = \nabla \cdot u_0+\lambda f
		\end{equation}
	\end{enumerate}
	We also claim that the restriction $\left. u_1 \right|_C$ is unique.
	
	Using this claim, we can define $u=u_0-u_1$ and it would be the unique choice for $u$ at $C$.
	
	To prove the uniqueness of $\left. u_1\right|_C$, let $q\in C$ be an arbitrary point and we will work with local coordinates $(x,y)$ near $q$, such that $y=f$. In such coordinates, $u_1$ at $(x,y)$ must be of the form $(v(x,y),0)$. Then \eqref{eq:to-be-integrated} has the form
	\begin{equation}
		\partial_x\left(\sqrt{\left|g\right|}v\right) = \sqrt{\left|g\right|}P
	\end{equation}
	where $P=\nabla\cdot u_0+\lambda f$ is the RHS of \eqref{eq:to-be-integrated}. In particular, if $q'\in C$ is another point which is close enough to $q$, we get, where $q=(x_q, a)$ and $q'=(x_{q'}, a)$:
	\begin{equation} \label{eq:local-integration}
		\left.\left(\sqrt{\left|g\right|}v\right)\right|_q^{q'}
		=
		\int_{x_q}^{x_{q'}} \left.\left( \sqrt{\left|g\right|}P\right)\right|_{(t,a)} dt
		=
		\int_{\left[q,q'\right]} P d\mu_f
	\end{equation}
	Note that the term $\sqrt{\left|g\right|}v$ from the RHS of \eqref{eq:local-integration} is the same in every coordinate chart $(x,y)$ in which $y=f$, and in fact it is the ratio between the functionals $\mu\left(u_1\wedge \cdot\right)$ and $df$. Therefore it follows that $u_1(q)$ can be calculated from the value of
	\begin{equation}\label{eq:global-integral}
	\int_{\gamma_q} \left(\nabla\cdot u_0 + \lambda f\right)d\mu_f
	\end{equation}
	where $\gamma_q$ is a curve in $C$ which connects a point in $U_1\cap C$ and $q$.
	This shows that $\left. u_1\right|_C$ is unique. Additionally, \eqref{eq:global-integral-welldefined} implies that \eqref{eq:global-integral} does not depend on the particular choice of $\gamma_q$, hence gives a definition of $u_1$.
\end{proof}

To finish the proof of theorem \ref{thm:interpolation-metric}, recall that $h$ and $\mu$ are fixed and we start with $u$ defined outside a compact set. If $u$ is defined on a set $W$ with $W \subsetneq E(W) \subset M$, then we can extend $u$ to $E(W)$ using claim \ref{claim:sideway-integration}. Otherwise, if $u$ is defined on a set $W$ with $W = E(W) \subsetneq M$, then we can choose a smooth path $\gamma$ between two connected components of $W$ such that $f\circ\gamma$ has no critical points, and smoothly extend $u$ from $W$ to $W\cup\gamma$ arbitrarily under the condition that $\partial_u f = h$. This gives an extension of $u$ to $W' = W\cup\gamma \supsetneq W$, and with claim \ref{claim:sideway-integration} we get an extension of $u$ to $E(W')$. Repeating this finitely many times we get an extension of $u$ to the whole $M$, as needed, completing the proof of theorem \ref{thm:interpolation-metric}.


\section{Proof of theorem \ref{thm:main-perturb-result}}
In this section we will prove theorem \ref{thm:main-perturb-result}.


\begin{definition}
	Let $X$ be an oval configuration in $\sph^2$. Then there is a two-coloring of the set of connected components of $\sph^2\setminus X$ such that for any two different connected components ${C_1, C_2\subset \sph^2\setminus X}$ whose boundary intersect have different colors; and this two-coloring is unique up to switching the colors. We will call it \emph{the canonical two-coloring} $c_X$ of $\sph^2\setminus X$.
\end{definition}
\begin{definition}
	Let $X$ be an oval configuration in $\sph^2$, and let $Y\subset X$ consists of a subset of the set of ovals from $X$. We will say that $X$ \emph{nicely-contains} $Y$ iff the canonical two-coloring $c_{X\setminus Y}$ of $\sph^2\setminus\left(X\setminus Y\right)$ satisfy the condition that all the ovals of $Y$ have the same color.
\end{definition}
Note that if $X$ is an oval configuration and $Y\subset X$ is a subset of the set of ovals of $X$, such that the canonical two-colorings $c_X, c_Y$ of $\sph^2\setminus X, \sph^2\setminus Y$ respectively agree on the neighborhood of each oval from $Y$, then $X$ nicely-contains $Y$. Indeed, we have that $c_{X\setminus Y}$ is the XOR of $c_X$ and $c_Y$ on $\sph^2\setminus X$, so in particular if $c_X$ and $c_Y$ agree near $Y$ then $c_{X\setminus Y}$ has only one color near $Y$.

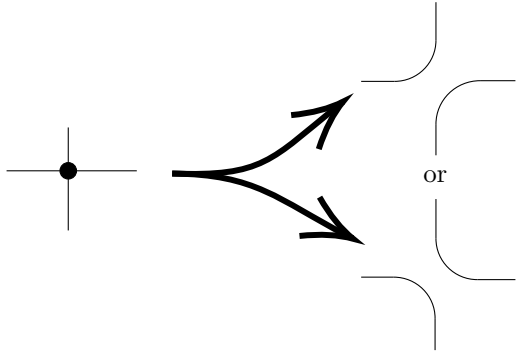
\begin{figure}
	\centering
	\begin{tikzpicture}[x=0.75pt,y=0.75pt,yscale=-1,xscale=1]
		\draw  [fill={rgb, 255:red, 0; green, 0; blue, 0 }  ,fill opacity=1 ] (35.84,91.62) .. controls (35.84,89.34) and (37.69,87.49) .. (39.97,87.49) .. controls (42.25,87.49) and (44.1,89.34) .. (44.1,91.62) .. controls (44.1,93.9) and (42.25,95.75) .. (39.97,95.75) .. controls (37.69,95.75) and (35.84,93.9) .. (35.84,91.62) -- cycle ;
		\draw    (44.1,91.62) -- (74.18,91.62) ;
		\draw    (9.18,91.62) -- (35.84,91.62) ;
		\draw    (39.97,95.75) -- (39.97,121.62) ;
		\draw    (39.97,69.82) -- (39.97,87.49) ;
		\draw  [draw opacity=0] (223.51,68.14) .. controls (223.51,56.08) and (233.37,46.3) .. (245.53,46.3) -- (245.53,68.14) -- cycle ; \draw   (223.51,68.14) .. controls (223.51,56.08) and (233.37,46.3) .. (245.53,46.3) ;  
		\draw  [draw opacity=0] (223.51,26) .. controls (223.51,26) and (223.51,26) .. (223.51,26) .. controls (223.51,37.4) and (214.27,46.64) .. (202.87,46.64) -- (202.87,26) -- cycle ; \draw   (223.51,26) .. controls (223.51,26) and (223.51,26) .. (223.51,26) .. controls (223.51,37.4) and (214.27,46.64) .. (202.87,46.64) ;  
		\draw    (245.53,46.3) -- (263.18,46.3) ;
		\draw    (223.51,68.14) -- (223.51,83.82) ;
		\draw    (186.18,46.64) -- (202.87,46.64) ;
		\draw    (223.51,6.82) -- (223.51,26) ;
		\draw  [draw opacity=0] (244.15,146.36) .. controls (244.15,146.36) and (244.15,146.36) .. (244.15,146.36) .. controls (232.75,146.36) and (223.51,137.12) .. (223.51,125.72) -- (244.15,125.72) -- cycle ; \draw   (244.15,146.36) .. controls (244.15,146.36) and (244.15,146.36) .. (244.15,146.36) .. controls (232.75,146.36) and (223.51,137.12) .. (223.51,125.72) ;  
		\draw  [draw opacity=0] (202.43,145.11) .. controls (202.43,145.11) and (202.43,145.11) .. (202.43,145.11) .. controls (202.43,145.11) and (202.43,145.11) .. (202.43,145.11) .. controls (213.83,145.11) and (223.07,154.35) .. (223.07,165.75) -- (202.43,165.75) -- cycle ; \draw   (202.43,145.11) .. controls (202.43,145.11) and (202.43,145.11) .. (202.43,145.11) .. controls (202.43,145.11) and (202.43,145.11) .. (202.43,145.11) .. controls (213.83,145.11) and (223.07,154.35) .. (223.07,165.75) ;  
		\draw    (186.18,145.11) -- (202.43,145.11) ;
		\draw    (244.15,146.36) -- (263.18,146.36) ;
		\draw    (223.07,181.82) -- (223.07,165.75) ;
		\draw    (223.51,105.82) -- (223.51,125.72) ;
		\draw [line width=2.25]    (91.84,93) .. controls (137.45,94.68) and (141.03,86.26) .. (174.54,58.95) ;
		\draw [shift={(177.18,56.82)}, rotate = 141.15] [color={rgb, 255:red, 0; green, 0; blue, 0 }  ][line width=2.25]    (24.48,-10.98) .. controls (15.57,-5.15) and (7.41,-1.49) .. (0,0) .. controls (7.41,1.49) and (15.57,5.15) .. (24.48,10.98)   ;
		\draw [line width=2.25]    (91.84,93) .. controls (133.88,93.33) and (144.55,106.02) .. (178.92,124.12) ;
		\draw [shift={(182.18,125.82)}, rotate = 207.18] [color={rgb, 255:red, 0; green, 0; blue, 0 }  ][line width=2.25]    (24.48,-10.98) .. controls (15.57,-5.15) and (7.41,-1.49) .. (0,0) .. controls (7.41,1.49) and (15.57,5.15) .. (24.48,10.98)   ;
		
		\draw (216,90) node [anchor=north west][inner sep=0.75pt]   [align=left] {or};
	\end{tikzpicture}
	\caption{Perturbation of a vertex}
	\label{fig:vertex-perturbation}
\end{figure}

\begin{definition}
	Let $G$ be an embedded graph in $\sph^2$ which is connected and $4$-regular.
	For each vertex $v$ of $G$, let $e_1,e_2,e_3,e_4$ be the edges incident to $v$ ordered cyclically, and choose a pairing $P_v$ of those edges 
	from one of $\left\{\left(e_1,e_2\right),\left(e_3,e_4\right)\right\}$, $ \left\{\left(e_1,e_4\right),\left(e_2,e_3\right)\right\}$.
	Remove $v$ and connect each pair of edges in $P_v$ together (see figure \ref{fig:vertex-perturbation}).
	Doing this for each vertex $v$ of $G$ one gets an oval configuration $X$ in $\sph^2$. We will say that an oval configuration $X$ is a \emph{perturbation} of $G$ if it is a result of the above process (for some choice of $P_v$ for each $v$).
\end{definition}

\begin{lemma} \label{lemma:reduction}
	Let $G$ be an embedded graph in $\sph^2$ which is connected and $4$-regular. Let $X$ be an oval configuration which is a perturbation of $G$. Let $Y\subset X$ be an oval configuration nicely-contained in $X$. Then $Y$ is also a perturbation of $G$.
\end{lemma}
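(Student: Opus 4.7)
The plan is strong induction on $|X\setminus Y|$. The base case $|X\setminus Y|=0$ gives $Y=X$, which is trivially a perturbation of $G$. For the inductive step, I will locate a single vertex $v\in V(G)$ such that switching the pairing $P_v$ to its alternative produces a new perturbation $X'$ of $G$, together with a subfamily $Y'\subset X'$ satisfying: $Y'$ is topologically equivalent to $Y$ as an oval configuration in $\sph^2$, $Y'$ is nicely-contained in $X'$, and $|X'\setminus Y'|<|X\setminus Y|$. Applying the induction hypothesis to $(X',Y')$ will then yield that $Y'$, and hence $Y$ by topological equivalence, is a perturbation of $G$.

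The key structural observation is that the nice-containment hypothesis gives all $Y$-ovals the same color (call it $A$) in the canonical 2-coloring $c_{X\setminus Y}$. Hence every region of the opposite color $B$ in $\sph^2\setminus(X\setminus Y)$ is devoid of ovals of $X$: no $Y$-ovals by nice-containment, and no $(X\setminus Y)$-ovals in its interior since it is a region of the complement. Such $B$-regions also contain no vertex of $G$ in their interiors, because every vertex has four incident edges lying on ovals.

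If some $B$-region $R$ is a topological disk bounded by a single oval $o\in X\setminus Y$, I pick any vertex $v$ of $G$ on $o$ and switch the pairing at $v$, merging $o$ with the other oval $o^*$ through $v$ into a new oval $o''$. Because $R$ is empty, the detour of $o''$ into $R$ is contractible in the complement of the remaining ovals, so $o''$ is isotopic to $o^*$ there. Setting
\[
Y'=\begin{cases}(Y\setminus\{o^*\})\cup\{o''\},&o^*\in Y,\\ Y,&o^*\in X\setminus Y,\end{cases}
\]
gives the required reduction, and nice-containment follows from a parity computation: the transformation $c_{X\setminus Y}\to c_{X'\setminus Y'}$ flips colors only within the empty region $R$, preserving the relative colors of all $Y'$-ovals. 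When no $B$-region is a disk, preparatory switches reduce to this case: either two boundary ovals of some $B$-region $R$ share a vertex of $G$ and can be merged by a switch (decreasing the number of boundary components of $R$), or a boundary oval of $R$ can be merged with an adjacent non-boundary oval in a way that still decreases $|X\setminus Y|$.

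The main obstacle I anticipate is the argument in the non-disk case, which requires a careful planar-topological analysis to establish the existence of an appropriate switching vertex in every configuration of $X\setminus Y$ and to track how the $B$-regions evolve under the preparatory switches. The parity bookkeeping for nice-containment is routine but needs to be carried out carefully for each type of switch.
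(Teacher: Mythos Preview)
Your overall strategy matches the paper's: strong induction on $|X\setminus Y|$, with each step locating a vertex $v$ where switching the pairing $P_v$ reduces $|X\setminus Y|$ while preserving the topological type of $Y$ and nice-containment. Your case split via disk versus non-disk $B$-regions also aligns with the paper's two cases (the paper phrases it as: Case~1, some region of $\sph^2\setminus X$ has $\ge 2$ boundary ovals all in $X\setminus Y$; Case~2, the negation). Since $B$-regions contain no $Y$-ovals, they are also regions of $\sph^2\setminus X$, so your non-disk case is exactly the paper's Case~1, and your disk case corresponds to the paper's Case~2.

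The genuine gap is in your non-disk case. You write ``either two boundary ovals of some $B$-region $R$ share a vertex of $G$ \ldots\ or a boundary oval of $R$ can be merged with an adjacent non-boundary oval,'' but you never establish that the first alternative holds, and the fallback is too vague to verify (for instance, when $R$ is an annulus with $Y$-ovals living in the $A$-regions on both sides, merging a boundary oval of $R$ with an exterior $Y$-oval wraps the merged curve around the inner boundary component and need not preserve the isotopy type of $Y$). The paper closes this gap cleanly using the connectivity of $G$: every face of $G$ is a disk, so pick a face $F\subset R$; two consecutive edges of $\partial F$ must at some point lie on different boundary ovals of $R$ (else all edges of $\partial F$ lie on one oval, and chasing face-adjacencies through $R$ forces every face of $R$ onto that same oval, contradicting $|\partial R|\ge 2$). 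Their common vertex is the switching vertex. Without this face argument your sub-case~(a) is an unproven assertion, and your own closing paragraph concedes this is where the work lies. A smaller issue in your disk case: ``pick any vertex $v$ on $o$'' allows both arcs at $v$ to lie on $o$, in which case the switch splits $o$ rather than merging; connectivity of $G$ again furnishes a vertex with $o^*\ne o$, and the paper invokes it explicitly.
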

\begin{proof}[lemma \ref{lemma:reduction}]
	We will prove by induction on $\left|X\setminus Y\right|$. The case $\left|X\setminus Y\right|=0$ is trivial, so assume $\left|X\setminus Y\right|\geq 1$.
	
	In the following, "face" means a face of the graph $G$ while "region" is an area bounded by some collection of ovals (which is a perturbation of $G$). Note that, given a perturbation of $G$, any face belongs to a unique region while a region may consist of several faces of $G$. 
	
	\emph{Case 1:} There is a region $R$ of $\sph^2\setminus X$ such that $\partial R$ consists of only ovals from $X\setminus Y$, and there are at least $2$ such ovals in $\partial R$ (see figure \ref{fig:reduction-case-1}).
	
	In this case, because the graph $G$ is connected, it follows that there is a face $F$ of $G$ which is inside $R$, with two edges $e_1,e_2$ in $\partial F$ which are neighbors (in the cyclic order of $\partial F$) and belong to different ovals (those ovals are necessarily from $X\setminus Y$). Let $v$ be the common vertex of $e_1,e_2$.
	As $e_1,e_2$ belong to different ovals, they are not paired with each other in $P_v$. Let $e_3$ be the edge paired to $e_2$ in $P_v$ and let $e_4$ be the edge paired to $e_1$ in $P_v$. Then $e_1, e_2, e_3, e_4$ is their cyclic order as the edges incident to $v$.
	We can change the choice of pairing $P_v$ to the other choice $P'_v=\left\{\left(e_1,e_2\right),\left(e_3,e_4\right)\right\}$.
	This gives rise to a new perturbation $X'$ of $G$ which has one less oval than $X$ (because now $e_1,e_2,e_3,e_4$ belong to the same oval and we did not change any other oval) and still nicely includes $Y$.
	
	\emph{Case 2:} For any region $R$ of $\sph^2\setminus X$, either $\partial R$ contains an oval from $Y$, or $\partial R$ consists of exactly $1$ oval from $X\setminus Y$.
	
	Note that from the assumption that $X$ nicely-contains $Y$ it follows that each oval $C$ of $X\setminus Y$ borders at least one region of $\sph^2\setminus X$ whose boundary consists only of ovals from $X\setminus Y$ (specifically it is the region which is colored by $c_{X\setminus Y}$ in the other color than $Y$); and by the case description, this region has only one boundary component, which is $C$. It follows that for each oval $C$ from $X\setminus Y$, all the other ovals from $X$ are on the same side with respect to $C$.
	
	Because $G$ is connected, there is a face $F$ of $G$ and two edges $e_1,e_2$ in $\partial F$ which are neighbors (in the cyclic order of $\partial F$), where $e_1$ belongs to an oval $C_1$ from $Y$ and $e_2$ belongs to an oval $C_2$ from $X\setminus Y$
	(see figure \ref{fig:reduction-case-2}). Let $v$ be the common vertex of $e_1,e_2$. Let $e_3$ be the edge paired to $e_2$ in $P_v$ and let $e_4$ the edge paired to $e_1$ in $P_v$. Then $e_1, e_2, e_3, e_4$ is their cyclic order as the edges incident to $v$.
	We can change the choice of pairing $P_v$ to the other choice $P'_v=\left\{\left(e_1,e_2\right),\left(e_3,e_4\right)\right\}$.
	This gives rise to a new perturbation $X'$ of $G$ which has one less oval than $X$ (because now $e_1,e_2,e_3,e_4$ belong to the same oval $C'$ and we did not change any other oval).
	Let $Y'$ be $\left(Y\setminus\left\{C_1\right\}\right)\cup\left\{C'\right\}$. Then $X'$ includes $Y'$ which is equivalent to $Y$: Indeed from this case description it follows that the side of $C_2$ which does not include $F$ does not have any ovals in it, therefore moving from $X$ to $X'$ did not change the containment configuration of ovals from $X\setminus \left\{C_2\right\}$ (with $C'$ in $X'$ in place of $C_1$ in $X$).
\end{proof}

\begin{figure}
	\centering
	\begin{tikzpicture}[x=0.75pt,y=0.75pt,yscale=-1,xscale=1]
		\clip (0,0) rectangle (480,200);
		\draw  [dash pattern={on 0.84pt off 2.51pt}] (120.55,125.83) .. controls (129.33,134.62) and (129.33,148.87) .. (120.55,157.66) .. controls (111.76,166.45) and (97.51,166.45) .. (88.72,157.66) .. controls (79.93,148.87) and (79.93,134.62) .. (88.72,125.83) .. controls (97.51,117.04) and (111.76,117.04) .. (120.55,125.83) -- cycle ;
		\draw  [draw opacity=0] (89.86,157.66) .. controls (93.79,153.73) and (99.21,151.3) .. (105.2,151.3) .. controls (111.2,151.3) and (116.62,153.73) .. (120.55,157.66) -- (105.2,173) -- cycle ; \draw   (89.86,157.66) .. controls (93.79,153.73) and (99.21,151.3) .. (105.2,151.3) .. controls (111.2,151.3) and (116.62,153.73) .. (120.55,157.66) ;  
		\draw  [draw opacity=0] (119.4,125.83) .. controls (115.47,129.75) and (110.05,132.18) .. (104.06,132.18) .. controls (98.07,132.18) and (92.64,129.75) .. (88.72,125.83) -- (104.06,110.49) -- cycle ; \draw   (119.4,125.83) .. controls (115.47,129.75) and (110.05,132.18) .. (104.06,132.18) .. controls (98.07,132.18) and (92.64,129.75) .. (88.72,125.83) ;  
		\draw    (73.77,112.02) -- (88.72,125.83) ;
		\draw    (120.55,125.83) -- (135.82,110.55) ;
		\draw    (71.84,174.54) -- (88.72,157.66) ;
		\draw    (120.55,157.66) -- (137.43,174.54) ;
		\draw [line width=0.75]  [dash pattern={on 4.5pt off 4.5pt}]  (73.77,112.02) .. controls (6.89,41.75) and (194.66,40.46) .. (135.82,110.55) ;
		\draw  [dash pattern={on 4.5pt off 4.5pt}]  (71.84,174.54) .. controls (28.11,212.16) and (-58.06,8.31) .. (101.42,7.67) .. controls (260.89,7.03) and (180.51,223.09) .. (137.43,174.54) ;
		\draw  [dash pattern={on 0.84pt off 2.51pt}] (368.42,130.66) .. controls (376.83,122.25) and (390.46,122.25) .. (398.86,130.66) .. controls (407.27,139.07) and (407.27,152.69) .. (398.86,161.1) .. controls (390.46,169.51) and (376.83,169.51) .. (368.42,161.1) .. controls (360.02,152.69) and (360.02,139.07) .. (368.42,130.66) -- cycle ;
		\draw  [draw opacity=0] (398.86,160) .. controls (395.11,156.25) and (392.79,151.06) .. (392.79,145.33) .. controls (392.79,139.6) and (395.11,134.41) .. (398.86,130.66) -- (413.54,145.33) -- cycle ; \draw   (398.86,160) .. controls (395.11,156.25) and (392.79,151.06) .. (392.79,145.33) .. controls (392.79,139.6) and (395.11,134.41) .. (398.86,130.66) ;  
		\draw  [draw opacity=0] (368.42,131.76) .. controls (372.18,135.51) and (374.5,140.7) .. (374.5,146.43) .. controls (374.5,152.16) and (372.18,157.35) .. (368.42,161.1) -- (353.75,146.43) -- cycle ; \draw   (368.42,131.76) .. controls (372.18,135.51) and (374.5,140.7) .. (374.5,146.43) .. controls (374.5,152.16) and (372.18,157.35) .. (368.42,161.1) ;  
		\draw    (354.12,117.46) -- (368.42,131.76) ;
		\draw    (398.86,130.66) -- (413.47,116.05) ;
		\draw    (352.28,177.25) -- (368.42,161.1) ;
		\draw    (398.86,161.1) -- (415.01,177.25) ;
		\draw [line width=0.75]  [dash pattern={on 4.5pt off 4.5pt}]  (354.12,117.46) .. controls (290.16,50.25) and (469.74,49.02) .. (413.47,116.05) ;
		\draw  [dash pattern={on 4.5pt off 4.5pt}]  (352.28,177.25) .. controls (310.46,213.22) and (223.74,17.04) .. (380.57,17.65) .. controls (537.39,18.27) and (456.21,223.68) .. (415.01,177.25) ;
		\draw [line width=2.25]    (211,133) -- (270,133) ;
		\draw [shift={(274,133)}, rotate = 180] [color={rgb, 255:red, 0; green, 0; blue, 0 }  ][line width=2.25]    (17.49,-7.84) .. controls (11.12,-3.68) and (5.29,-1.07) .. (0,0) .. controls (5.29,1.07) and (11.12,3.68) .. (17.49,7.84)   ;
		
		\draw (159,98.4) node [anchor=north west][inner sep=0.75pt]    {$R$};
		\draw (127.33,120) node [anchor=north west][inner sep=0.75pt]    {$e_{1}$};
		\draw (136,160) node [anchor=north west][inner sep=0.75pt]    {$e_{2}$};
		\draw (61.33,158) node [anchor=north west][inner sep=0.75pt]    {$e_{3}$};
		\draw (62.33,116.07) node [anchor=north west][inner sep=0.75pt]    {$e_{4}$};
		\draw (408.33,123) node [anchor=north west][inner sep=0.75pt]    {$e_{1}$};
		\draw (413.33,161.73) node [anchor=north west][inner sep=0.75pt]    {$e_{2}$};
		\draw (339,160) node [anchor=north west][inner sep=0.75pt]    {$e_{3}$};
		\draw (342,119) node [anchor=north west][inner sep=0.75pt]    {$e_{4}$};
		\draw (99,135.73) node [anchor=north west][inner sep=0.75pt]    {$v$};
		\draw (376.33,139.07) node [anchor=north west][inner sep=0.75pt]    {$v$};
	\end{tikzpicture}
	\caption{Case 1 of proof of lemma \ref{lemma:reduction}}
	\label{fig:reduction-case-1}
\end{figure}
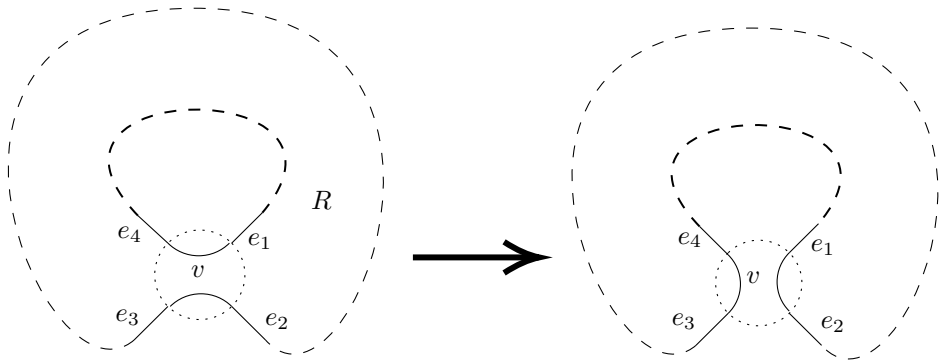

\begin{figure}
	\centering
	\begin{tikzpicture}[x=0.75pt,y=0.75pt,yscale=-1,xscale=1]
		\clip (0,0) rectangle (480,200);
		\draw  [dash pattern={on 0.84pt off 2.51pt}] (120.55,125.83) .. controls (129.33,134.62) and (129.33,148.87) .. (120.55,157.66) .. controls (111.76,166.45) and (97.51,166.45) .. (88.72,157.66) .. controls (79.93,148.87) and (79.93,134.62) .. (88.72,125.83) .. controls (97.51,117.04) and (111.76,117.04) .. (120.55,125.83) -- cycle ;
		\draw  [draw opacity=0] (89.86,157.66) .. controls (93.79,153.73) and (99.21,151.3) .. (105.2,151.3) .. controls (111.2,151.3) and (116.62,153.73) .. (120.55,157.66) -- (105.2,173) -- cycle ; \draw   (89.86,157.66) .. controls (93.79,153.73) and (99.21,151.3) .. (105.2,151.3) .. controls (111.2,151.3) and (116.62,153.73) .. (120.55,157.66) ;  
		\draw  [draw opacity=0] (119.4,125.83) .. controls (115.47,129.75) and (110.05,132.18) .. (104.06,132.18) .. controls (98.07,132.18) and (92.64,129.75) .. (88.72,125.83) -- (104.06,110.49) -- cycle ; \draw   (119.4,125.83) .. controls (115.47,129.75) and (110.05,132.18) .. (104.06,132.18) .. controls (98.07,132.18) and (92.64,129.75) .. (88.72,125.83) ;  
		\draw    (73.77,112.02) -- (88.72,125.83) ;
		\draw    (120.55,125.83) -- (135.82,110.55) ;
		\draw    (71.84,174.54) -- (88.72,157.66) ;
		\draw    (120.55,157.66) -- (137.43,174.54) ;
		\draw [line width=0.75]  [dash pattern={on 4.5pt off 4.5pt}]  (73.77,112.02) .. controls (6.89,41.75) and (194.66,40.46) .. (135.82,110.55) ;
		\draw  [dash pattern={on 4.5pt off 4.5pt}]  (71.84,174.54) .. controls (28.11,212.16) and (-58.06,8.31) .. (101.42,7.67) .. controls (260.89,7.03) and (180.51,223.09) .. (137.43,174.54) ;
		\draw  [dash pattern={on 0.84pt off 2.51pt}] (368.42,130.66) .. controls (376.83,122.25) and (390.46,122.25) .. (398.86,130.66) .. controls (407.27,139.07) and (407.27,152.69) .. (398.86,161.1) .. controls (390.46,169.51) and (376.83,169.51) .. (368.42,161.1) .. controls (360.02,152.69) and (360.02,139.07) .. (368.42,130.66) -- cycle ;
		\draw  [draw opacity=0] (398.86,160) .. controls (395.11,156.25) and (392.79,151.06) .. (392.79,145.33) .. controls (392.79,139.6) and (395.11,134.41) .. (398.86,130.66) -- (413.54,145.33) -- cycle ; \draw   (398.86,160) .. controls (395.11,156.25) and (392.79,151.06) .. (392.79,145.33) .. controls (392.79,139.6) and (395.11,134.41) .. (398.86,130.66) ;  
		\draw  [draw opacity=0] (368.42,131.76) .. controls (372.18,135.51) and (374.5,140.7) .. (374.5,146.43) .. controls (374.5,152.16) and (372.18,157.35) .. (368.42,161.1) -- (353.75,146.43) -- cycle ; \draw   (368.42,131.76) .. controls (372.18,135.51) and (374.5,140.7) .. (374.5,146.43) .. controls (374.5,152.16) and (372.18,157.35) .. (368.42,161.1) ;  
		\draw    (354.12,117.46) -- (368.42,131.76) ;
		\draw    (398.86,130.66) -- (413.47,116.05) ;
		\draw    (352.28,177.25) -- (368.42,161.1) ;
		\draw    (398.86,161.1) -- (415.01,177.25) ;
		\draw [line width=0.75]  [dash pattern={on 4.5pt off 4.5pt}]  (354.12,117.46) .. controls (290.16,50.25) and (469.74,49.02) .. (413.47,116.05) ;
		\draw  [dash pattern={on 4.5pt off 4.5pt}]  (352.28,177.25) .. controls (310.46,213.22) and (223.74,17.04) .. (380.57,17.65) .. controls (537.39,18.27) and (456.21,223.68) .. (415.01,177.25) ;
		\draw [line width=2.25]    (211,133) -- (270,133) ;
		\draw [shift={(274,133)}, rotate = 180] [color={rgb, 255:red, 0; green, 0; blue, 0 }  ][line width=2.25]    (17.49,-7.84) .. controls (11.12,-3.68) and (5.29,-1.07) .. (0,0) .. controls (5.29,1.07) and (11.12,3.68) .. (17.49,7.84)   ;
		
		\draw (100,65) node [anchor=north west][inner sep=0.75pt]    {$C_1$};
		\draw (172,98.4) node [anchor=north west][inner sep=0.75pt]    {$C_2$};
		\draw (127.33,120) node [anchor=north west][inner sep=0.75pt]    {$e_{1}$};
		\draw (136,160) node [anchor=north west][inner sep=0.75pt]    {$e_{2}$};
		\draw (61.33,158) node [anchor=north west][inner sep=0.75pt]    {$e_{3}$};
		\draw (62.33,116.07) node [anchor=north west][inner sep=0.75pt]    {$e_{4}$};
		\draw (408.33,123) node [anchor=north west][inner sep=0.75pt]    {$e_{1}$};
		\draw (413.33,161.73) node [anchor=north west][inner sep=0.75pt]    {$e_{2}$};
		\draw (339,160) node [anchor=north west][inner sep=0.75pt]    {$e_{3}$};
		\draw (342,119) node [anchor=north west][inner sep=0.75pt]    {$e_{4}$};
		\draw (99,135.73) node [anchor=north west][inner sep=0.75pt]    {$v$};
		\draw (376.33,139.07) node [anchor=north west][inner sep=0.75pt]    {$v$};
	\end{tikzpicture}
	\caption{Case 2 of proof of lemma \ref{lemma:reduction}}
	\label{fig:reduction-case-2}
\end{figure}

\begin{figure}
	\centering
	\begin{tikzpicture}[x=0.75pt,y=0.75pt,yscale=-1,xscale=1]
		\node[inner sep=0pt] (imgbefore) at (100,90) {\includegraphics[width=3cm]{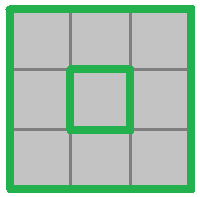}};
		\draw    (193,89) -- (358.45,89) ;
		\draw [shift={(360.45,89)}, rotate = 180] [color={rgb, 255:red, 0; green, 0; blue, 0 }  ][line width=0.75]    (15.3,-6.86) .. controls (9.73,-3.22) and (4.63,-0.93) .. (0,0) .. controls (4.63,0.93) and (9.73,3.22) .. (15.3,6.86)   ;
		\node[inner sep=0pt] (imgafter) at (450,90) {\includegraphics[width=3cm]{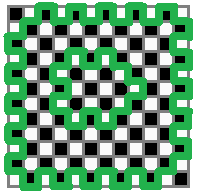}};
	\end{tikzpicture}
	\caption{Example of lemma \ref{lemma:chessboard-zigzag}}
	\label{fig:square-wave-pattern}
\end{figure}
\begin{lemma}
	\label{lemma:chessboard-zigzag}
	There is an absolute constant $M$ such that the following holds:
	Let $X$ be an oval configuration which can be drawn (recall definition \ref{def:can-be-drawn}) in the $n\times n$ grid graph, and fix a canonical two-coloring $c_X$.
	On the $Mn\times Mn$ grid graph, let $c_{Mn}$ be the $2$-coloring of its finite faces in chessboard pattern.
	Then there is a drawing $G'$ of $X$ in the $Mn\times Mn$ grid graph such that for any edge $e$ of $G'$, the black side of $e$ according to $c_X$ and the black side of $e$ according to $c_{Mn}$ agree (see figure \ref{fig:square-wave-pattern})
\end{lemma}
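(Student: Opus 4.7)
Proof plan. The plan is a scale-and-perturb argument. Fix $M$ to be a small even constant (for concreteness $M=4$, which can be taken as the absolute constant in the statement). Starting from a drawing $G$ of $X$ in the $n\times n$ grid (which exists by hypothesis), scale $G$ up to a subgraph of the $Mn\times Mn$ grid so that each unit edge of $G$ becomes a length-$M$ straight path along a row or column. Each scaled vertex $(Ma,Mb)$ has $Ma+Mb\equiv 0 \pmod 2$, so its parity aligns with the chessboard $c_{Mn}$, which colors $(i,j)$ black exactly when $i+j$ is even; this parity alignment is the engine that makes the whole construction work.

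For each scaled length-$M$ segment, replace it by a ``square wave'' zigzag of length $2M$ lying in a width-one strip adjacent to the segment. For a horizontal segment from $(Ma,Mb)$ to $(M(a+1),Mb)$ the zigzag alternates between the baseline $y=Mb$ and a parallel line $y=Mb\pm 1$ with period two, and for $M=4$ this gives two bumps; vertical segments are handled by symmetry. A direct parity computation shows that for every edge of the zigzag, the adjacent cell on the $c_X$-interior side has coordinate sum even and is therefore $c_{Mn}$-black, while the adjacent cell on the $c_X$-exterior side is $c_{Mn}$-white. There are two choices of bump phase (determined by which side of the segment the $c_X$-interior lies on) and, within each phase, two independent choices of direction (up or down for each bump of a horizontal segment, left or right for a vertical one).

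I use this freedom to prevent collisions at scaled vertices. Classify each scaled vertex of $G$ as a straight pass-through, a convex L-corner (the $c_X$-interior near the vertex occupies one quadrant), or a concave L-corner (the $c_X$-interior occupies three quadrants). The rule: at a convex corner, both incident zigzags have their nearest bump going outward into the $c_X$-exterior; at a concave corner, both nearest bumps go inward into the $c_X$-interior. In either case the two nearest bumps lie in perpendicular width-one strips that share only the corner vertex, so the degree of $G'$ at the corner is exactly $2$. For an edge with one convex and one concave endpoint, use a mixed zigzag whose two bumps have opposite directions --- outward near the convex end and inward near the concave end --- with a short baseline segment between them; with $M=4$ this fits within the length of the edge. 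At straight pass-through vertices, adjacent zigzags concatenate without any constraint, since one ends with an edge perpendicular to the segment and the next begins with an edge along it (or vice versa, depending on the phase). The remaining verifications --- $2$-regularity of $G'$, color matching on every edge, and topological equivalence of the components of $G'$ to the ovals of $X$ --- are then routine: color matching is the parity check, $2$-regularity is immediate from the degree analysis above, and topological equivalence holds because the zigzag is a small continuous perturbation of the scaled $G$ that preserves the $c_X$-partition of regions. The main technical obstacle is the collision analysis at concave corners: the naive ``always outward'' rule would place the two perpendicular bumps inside the same concave exterior pocket and cause an edge collision at the corner cell, and it is precisely the inward-bump rule (together with its matching shifted phase) that resolves this.
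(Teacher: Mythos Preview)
Your approach is sound and correct in its essentials, but takes a genuinely different route from the paper. The paper gives a \emph{region-based} construction: it scales the $n\times n$ grid into an $(Mn+2)\times(Mn+2)$ grid (with a width-$1$ border), lets $B_0$ be the union of faces lying in the $c_X$-black region, adds the set $B_1$ of chessboard-black faces adjacent to $B_0$, then adds the set $B_2$ of chessboard-white faces all of whose finite neighbors lie in $B_0\cup B_1$, and finally takes $G'=\partial B$ for $B=B_0\cup B_1\cup B_2$. This handles all corners uniformly, with no case analysis: the definition of $B$ is global, and the verification that $\partial B$ has the right color on each side is a single local check. Your edge-by-edge zigzag, by contrast, is more explicit and hands-on but requires the convex/concave corner analysis you describe; your diagnosis of the concave-corner collision and its resolution via the inward-bump rule is exactly right, and your parity claim (phase fixes the color, bump direction is free) is correct. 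One minor gap: if an edge of the original drawing lies on the boundary of the $n\times n$ grid, an outward bump at a convex corner may leave the $Mn\times Mn$ grid. The paper handles this by adding the width-$1$ border (and is itself informal about $Mn$ versus $Mn+2$); you can fix it identically, or by first centering the drawing in an $(n+2)\times(n+2)$ grid, at the harmless cost of adjusting the constant $M$.
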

\begin{proof}[lemma \ref{lemma:chessboard-zigzag}]
	Let $M\geq 4$, let $h_1$ be the embedding of $X$ given by a drawing of $X$ in the $n\times n$ grid graph, and let $h_2$ be the obvious embedding (as a topological minor - for a definition of the term see for example section 1.7 of \cite{graph-theory}) of the $n\times n$ grid graph in the $(Mn+2)\times (Mn+2)$ grid graph (will be denoted by $G_{Mn+2}$) given by subdividing each face of the $n\times n$ grid into $M\times M$ grid and adding border of width $1$ around the whole grid.
	Then $h:=h_1\circ h_2$ is an embedding of $X$ as a drawing in $G_{Mn+2}$.
	Let $c_{Mn+2}$ be the $2$-coloring of the finite faces of $G_{Mn+2}$ given by chessboard pattern.
	
	Let $B_0$ be the set of faces of $G_{Mn+2}$ which belong under $h$ to a region which is colored black by $c_X$.
	
	Let $B_1$ be the set of $c_{Mn+2}$-black faces of $G_{Mn+2}$ which are adjacent to some face in $B_0$.
	
	Let $B_2$ be the set of $c_{Mn+2}$-white faces $F$ of $G_{Mn+2}$ such that all of the adjacent finite faces of $F$ are in $B_0\cup B_1$.
	
	Let $B=B_0\cup B_1\cup B_2$.
	
	Then it is easy to see that $\partial B$ is a drawing of $X$ in the $(Mn+2)\times (Mn+2)$ grid graph with the required condition.
\end{proof}

For $0\leq m\leq n$, let $Y_n^m$ be, maybe up to some multiplicative constant which is uninteresting for us, the spherical harmonic of degree $n$ which is of the form
\[
Y_n^m(\theta,\phi) = \sin^m \theta F_n^m(\cos \theta) \sin\left(m\phi\right)
\]
where $F_n^m$ is defined using the Legendre polynomial $P_n$ of degree $n$ by
\[
c_{n,m}\left(1-x^2\right)^{\frac m2}F_n^m(x) = \frac{d^m}{dx^m}P_n(x)
\]

\begin{lemma}\label{lemma:from-drawing-to-coarse-perturb}
	There is an absolute constant $D$ such that the following holds: let $X$ be an oval configuration which can be drawn in the $n\times n$ grid graph. Let $G$ be the zero set of the spherical harmonic $Y_{\left\lfloor 2Dn\right\rfloor}^{\left\lfloor Dn\right\rfloor}$, thought of as an embedded graph. Let $G'$ be a $4$-regular embedded graph obtained from $G$ by, for $v$ being each one out of the north pole and the south pole, removing $v$ and joining the edges 
	$\left\{e_1,e_2,\ldots,e_{2k}\right\}$
	incident to $v$ by the pairing $\left(e_1,e_2\right),\left(e_3,e_4\right),\ldots,\left(e_{2k-1},e_{2k}\right)$.
	
	Then there is a perturbation of $G'$ which nicely contains $X$.
\end{lemma}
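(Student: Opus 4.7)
The plan is to realize $X$ inside the grid-like zero set of the spherical harmonic and then complete this drawing to a full perturbation of $G'$ so that all extra ovals are small and do not separate the ovals of $X$.

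I would first analyze the zero set of $Y_N^k$ with $N=\lfloor 2Dn\rfloor$ and $k=\lfloor Dn\rfloor$: the factor $\sin(k\phi)$ vanishes on $2k$ meridians meeting at the two poles, and $F_N^k(\cos\theta)$ vanishes on $N-k$ latitude circles, since the associated Legendre function $P_N^k$ has exactly $N-k$ simple zeros in $(-1,1)$. Thus $G'$ is a $4$-regular planar graph whose generic part is a $(2k)\times(N-k)$ cylindrical grid, capped at each pole by arcs pairing consecutive meridian-ends. Being $4$-regular and planar, $G'$ has a canonical bipartite two-coloring of its faces (unique up to swap); on the cylindrical part this coincides with the chessboard coloring. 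Choosing $D$ large enough so that $2k,\,N-k \geq Mn$ (say $D=M$, with $M$ from lemma \ref{lemma:chessboard-zigzag}) makes $G'$ contain an $Mn\times Mn$ grid subgraph $\Gamma$.

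Next, I would apply lemma \ref{lemma:chessboard-zigzag} to get a drawing $H$ of $X$ inside $\Gamma$ whose chessboard two-coloring matches $c_X$ along every edge of $H$. Identifying $\Gamma$ with its image in $G'$ presents $H$ as a $2$-regular subgraph of $G'$. I would then define a perturbation $X'$ of $G'$ by choosing the pairing $P_v$ at each vertex $v$ of $G'$ as follows: at a vertex on $H$, the pairing is the one under which $H$ passes smoothly through $v$; at every other vertex, the pairing joins the two incident faces whose canonical color is the \emph{inside} color of $X$. By construction all ovals of $X$ appear among the ovals of $X'$, and each oval in $X'\setminus X$ is a small loop confined to inside-colored faces near a single non-$H$ vertex; in particular no extra oval encloses any oval of $X$. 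Hence in the canonical two-coloring $c_{X'\setminus X}$ of $\sph^2\setminus(X'\setminus X)$ all ovals of $X$ share the common outside-colored region, giving that $X'$ nicely contains $X$.

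The main obstacle is the combinatorial bookkeeping needed to show that these locally specified pairings produce a valid perturbation whose extra ovals are both confined to inside-colored faces and do not enclose any $H$-oval. This is ensured by the chessboard compatibility provided by lemma \ref{lemma:chessboard-zigzag} together with the canonical bipartite face structure of $G'$: at each non-$H$ vertex the two inside-colored faces are diagonally opposite, so the intended pairing is well-defined, the resulting local arc bounds only inside-colored faces, and it cannot cross any edge of $H$. A secondary technical point is checking that the bipartite face coloring extends across the pole caps of $G'$, which follows because the specific pairing $(e_1,e_2),(e_3,e_4),\ldots$ in the construction of $G'$ was chosen so that consecutive cap arcs alternate signs consistently with the sign of $Y_N^k$.
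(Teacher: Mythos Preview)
Your overall strategy matches the paper's: embed an $Mn\times Mn$ grid into $G'$, apply lemma \ref{lemma:chessboard-zigzag} to draw $X$ there, and at each vertex of $G'$ lying on the drawing $H$ choose the pairing that keeps $H$ intact. The divergence is in how you justify that the resulting perturbation $X'$ nicely contains $X$, and there your argument has a genuine gap.

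The assertion that ``each oval in $X'\setminus X$ is a small loop confined to inside-colored faces near a single non-$H$ vertex'' is not correct. Even with your specific pairing at non-$H$ vertices, the extra ovals are not localized near single vertices: away from $H$ they are boundaries of single outside-colored faces (four edges and four vertices each), while at an $H$-vertex the forced pairing $(e_1,e_2),(e_3,e_4)$ merges the two \emph{outside}-colored faces adjacent to $H$ (the faces $F_{23}$ and $F_{41}$), not the inside-colored ones. So the merging pattern is not uniform across $G'$, the ``small loop near a single vertex'' picture fails, and the combinatorial bookkeeping you allude to does not go through as stated.

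The paper's argument sidesteps all of this: the pairing at non-$H$ vertices may be chosen \emph{arbitrarily}. For \emph{any} perturbation $X'$ of $G'$, the canonical two-coloring $c_{X'}$ restricted to the faces of $G'$ is exactly the chessboard coloring, since adjacent faces of $G'$ always lie on opposite sides of some oval of $X'$. Lemma \ref{lemma:chessboard-zigzag} was arranged precisely so that this chessboard coloring agrees with $c_X$ along every edge of $H$; hence $c_{X'}$ and $c_X$ agree in a neighborhood of each oval of $X$, and the observation recorded immediately after the definition of ``nicely contains'' finishes the proof. No analysis of the shapes or locations of the extra ovals is needed.
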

\begin{proof}[lemma \ref{lemma:from-drawing-to-coarse-perturb}]
	Let $M$ as in lemma \ref{lemma:chessboard-zigzag} and let $H$ be the $Mn\times Mn$ grid graph.
	Choose $D$ such that $H$ is an induced subgraph of $G'$ (that is, $H$ is isomorphic to a graph that can be formed from $G'$ by removing some vertices, while keeping all the edges that connect between non-removed vertices).
	Draw $X$ in $H$ as in lemma \ref{lemma:chessboard-zigzag} and compose it with the inclusion $H\to G'$.
	Then we can choose a perturbation of $G'$ such that for any vertex $v$ of $G'$ on which $X$ passes through we choose the pairing $P_v$ that does not separate the two edges belonging to $X$.
	For any such perturbation we get that it nicely contains $X$.
\end{proof}

\begin{lemma}\label{lemma:metric-perturb}
	Let $f=Y_{\left\lfloor 2Dn\right\rfloor}^{\left\lfloor Dn\right\rfloor}$ be a middle-zonal spherical harmonic with eigenvalue (of the minus Laplacian) $\lambda$, let $G'$ as in lemma \ref{lemma:from-drawing-to-coarse-perturb}, and let $X$ be an oval configuration which is a perturbation of $G'$. Then there is an infinitesimal perturbation $g$ of the round metric of $\sph^2$ and an eigenfunction of the minus $g$-Laplacian with eigenvalue $\lambda$ and zero set equivalent to $X$.
\end{lemma}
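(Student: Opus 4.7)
The plan is a first-order perturbation argument. Write $g=g_0+\epsilon h$ for the round metric $g_0$ and a symmetric $2$-tensor $h$ to be chosen, and seek an eigenfunction $f_\epsilon = f_0 + \epsilon f_1 + O(\epsilon^2)$ with eigenvalue $\lambda+\epsilon\mu+O(\epsilon^2)$. Expanding $-\Delta_{g_\epsilon}f_\epsilon = (\lambda+\epsilon\mu)f_\epsilon$ to first order yields
\[
(\Delta_{g_0}+\lambda)\,f_1 \;=\; -L_h f_0 - \mu f_0,
\]
where $L_h$ denotes the derivative of $-\Delta_g$ at $g_0$ in the direction $h$. Fredholm solvability forces the right-hand side to be orthogonal to the $(2N+1)$-dimensional eigenspace $E_\lambda$; pairing with $f_0$ determines $\mu$, while pairing with a basis of $E_\lambda\cap f_0^\perp$ produces $2N$ linear constraints on $h$. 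Given admissible $h$ we normalize $f_1\perp E_\lambda$.

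Near each non-polar crossing $v_i$ of the zero set of $f_0=Y^{\lfloor Dn\rfloor}_{\lfloor 2Dn\rfloor}$ there exist local coordinates $(x,y)$ with $f_0(x,y)=c_i\,xy+O(\|(x,y)\|^3)$ for some explicit $c_i\ne 0$ (of either sign). The zero set of $f_0+\epsilon f_1$ near $v_i$ is then approximated by the hyperbola $c_i\,xy = -\epsilon f_1(v_i)$, whose two branch configurations realize precisely the two resolutions of figure \ref{fig:vertex-perturbation}, distinguished by the sign of $c_i f_1(v_i)$. Because $X$ is a prescribed perturbation of $G'$, it dictates a sign $\sigma_i\in\{\pm 1\}$ at every non-polar crossing; an analogous local analysis near the two poles matches the edge pairing built into $G'$. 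The lemma therefore reduces to producing $h$ satisfying the solvability conditions and with $\mathrm{sign}\bigl(c_i f_1(v_i)\bigr)=\sigma_i$ for every $i$.

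The heart of the proof is the surjectivity of the linear map from admissible $h$ to $(f_1(v_1),\ldots,f_1(v_K))\in\RR^K$, with $K=\Theta(n^2)$. I would restrict to conformal perturbations $h=2\tilde\sigma\,g_0$, in which case $L_h f_0 = -2\lambda\,\tilde\sigma f_0$, the admissibility conditions become $\int_{\sph^2} \tilde\sigma f_0\,\psi=0$ for $\psi\in E_\lambda\cap f_0^\perp$, and
\[
f_1 = 2\lambda\,(\Delta_{g_0}+\lambda)^{-1}(\tilde\sigma f_0)\quad\text{modulo }E_\lambda.
\]
Surjectivity is equivalent, by duality, to the claim that for any nonzero $(a_1,\ldots,a_K)$ the function $f_0\cdot G$, with $G=(\Delta_{g_0}+\lambda)^{-1}\sum_i a_i\delta_{v_i}$, is not orthogonal to every admissible $\tilde\sigma$. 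This follows by choosing $\tilde\sigma$ concentrated near an individual $v_i$ with $a_i\ne 0$ and inspecting the local logarithmic singularity of $G$ at that point.

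The main obstacle is exactly this surjectivity step: the Green's function is nonlocal and the evaluation takes place at $K=\Theta(n^2)$ points, so some care is needed to combine the infinite-dimensional flexibility of $h$ with the finitely many prescribed sign conditions. Once $h$ and $f_1$ are in hand, an application of the implicit function theorem in the Banach manifold of $C^k$ metrics near $g_0$ upgrades the formal first-order construction to a genuine eigenpair of $-\Delta_{g_0+\epsilon h}$ for all sufficiently small $\epsilon>0$; the zero set of this eigenfunction, being a $C^1$-small perturbation of $\{f_0+\epsilon f_1=0\}$, is topologically equivalent to $X$.
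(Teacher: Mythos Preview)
Your route is quite different from the paper's, and the sketch has a real gap at precisely the point you flag as the main obstacle.

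\textbf{Where your argument breaks.} In the duality step you invoke $G=(\Delta_{g_0}+\lambda)^{-1}\sum_i a_i\delta_{v_i}$, but $\lambda$ is an eigenvalue of $-\Delta_{g_0}$, so this inverse does not exist; you must project off $E_\lambda$ first, and that projection interacts with all of the $\delta_{v_i}$ at once. More seriously, your local argument (``concentrate $\tilde\sigma$ near some $v_i$ with $a_i\neq 0$ and use the logarithmic singularity of $G$'') fails: each $v_i$ is a nondegenerate saddle of $f_0$, so $f_0$ vanishes \emph{quadratically} there, and the product $f_0\cdot G$ behaves like $r^2\log r\to 0$ near $v_i$. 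Concentrating $\tilde\sigma$ at $v_i$ therefore gives a vanishing, not a diverging, pairing. Two further issues: the lemma requires eigenvalue exactly $\lambda$, whereas your scheme produces $\lambda+\epsilon\mu+O(\epsilon^2)$ (a rescaling would fix this, but you do not say so); and the implicit function theorem step is delicate because $\lambda$ has multiplicity $2\lfloor 2Dn\rfloor+1$, so a Lyapunov--Schmidt reduction is needed rather than a direct IFT.

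\textbf{How the paper proceeds instead.} The paper avoids the Fredholm/surjectivity analysis entirely by going in the opposite order. First it perturbs the \emph{function}: for each saddle $p$ in the zero set it adds a signed bump $s(p)\phi_p$, where $\phi_p$ equals a positive spherical harmonic of eigenvalue $\lambda$ near $p$, plus a global compensator $m_s\psi$ supported away from the critical points so that the total integral vanishes. This produces $f_t$ with the prescribed nodal topology for small $t>0$ by direct local inspection. Then it \emph{manufactures} the metric: using that $(\Delta+\lambda)f_t$ is compactly supported away from the critical set and has zero integral, one finds (via $H^2_c$ of the punctured sphere) a vector field $u$ with $\nabla\cdot(\nabla f+tu)=-\lambda f_t$ and $\nabla f+tu=\nabla f_t$ near the critical points. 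Lemma~\ref{lemma:pointwise} then yields, for each small $t$, the unique area-preserving metric $g_t$ with $\nabla_{g_t}f_t=\nabla f+tu$; this makes $f_t$ an exact eigenfunction of $-\Delta_{g_t}$ with eigenvalue exactly $\lambda$, and $g_t\to g_0$ smoothly as $t\to 0$. No spectral perturbation theory, no surjectivity lemma, no IFT.
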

\begin{proof}[lemma \ref{lemma:metric-perturb}]
	Let $S_1\subset\sph^2$ be the set of critical points of $f$ which are also zeroes of $f$, and let $S_2\subset\sph^2$ be the set of critical points of $f$ which are not zeroes of $f$.
	For each $p\in S_1\cup S_2$, let $U_p,V_p$ be open neighborhoods of $p$ such that:
	\begin{itemize}
		\item $U_p\subset \overline{U_p}\subset V_p$
		\item $V_p\cap V_q=\emptyset$ for $p\neq q \in S_1\cup S_2$
		\item $U_p$ is small enough so that there is a spherical harmonic (with eigenvalue $\lambda$) which is positive on $U_p$.
	\end{itemize}
	For each $p\in S_1$, let $\phi_p$ be a smooth bump function, supported at $V_p$, and identically equals on $U_p$ to some spherical harmonic (with eigenvalue $\lambda$ of the minus Laplacian) which is positive at $p$.
	Let $W = \sph^2\setminus\bigcup_p \overline{U_p}$, and let $\psi$ be a nonnegative smooth bump function, supported at $W$.
	For any choice of signs $s:S_1\to \left\{1,-1\right\}$, let $m_s\in\RR$ be a number which satisfy the following:
	\begin{equation}\label{eq:vanishing-integral}
		\int_{\sph^2}\left(m_s \psi + \sum_{p\in S_1} s(p)\phi_p\right) = 0
	\end{equation}
	Define, for $x\in\sph^2$ and $t\in\RR$,
	\[
	f_t(x) = f(x) + t\left(m_s\psi(x) + \sum_{p\in S_1}s(p)\phi_p(x)\right)
	\]
	Note that for each choice of $X$ as a perturbation of $G'$ there is a choice of $s$ and $\epsilon>0$ such that the zero set of $f_t$ for any $0<t<\epsilon$ is equivalent to $X$.
	
	Since $\int_{\sph^2}\Delta\phi_p=0$, \eqref{eq:vanishing-integral} is equivalent to
	\begin{equation}\label{eq:vanishing-integral-2}
		\int_{\sph^2}\left(m_s \psi + \sum_{p\in S_1} s(p)\left(\phi_p + \frac 1\lambda \Delta\phi_p\right)\right)=0
	\end{equation}
	Note that in \eqref{eq:vanishing-integral-2} the integrand is supported on $W$, so in particular compactly supported on $\sph^2\setminus\left(S_1\cup S_2\right)$.
	As $\sph^2\setminus\left(S_1\cup S_2\right)$ is connected, by corollary 5.8 in \cite{BottTu}, we have that the de-Rham cohomology with compact support $H^2_c\left(\sph^2\setminus\left(S_1\cup S_2\right)\right)$ is isomorphic to $\RR$; in other words every compactly supported $2$-form on $\sph^2\setminus\left(S_1\cup S_2\right)$ with vanishing integral is of the form $d\alpha$ for some compactly supported $1$-form $\alpha$ on $\sph^2\setminus\left(S_1\cup S_2\right)$.
	In particular, there is a smooth vector field $u_0$ on $\sph^2$ supported on some compact $W'$ with $W\subset W'\subset \sph^2\setminus\left(S_1\cup S_2\right)$ such that
	\[
	\nabla\cdot u_0 = m_s \psi + \sum_{p\in S_1}s(p)\left(\phi_p+\frac 1\lambda \Delta \phi_p\right)
	\]
	Let $u$ be the vector field
	\[
	u = -\lambda u_0 + \sum_{p\in S_1} s(p)\nabla \phi_p
	\]
	Then we get that, for any $t$,
	\[
	\nabla\cdot\left(\nabla f + tu\right) =-\lambda f_t
	\]
	and outside $W'$ we have $\nabla f + tu = \nabla f_t$.
	
	Note that for $t>0$ small enough, $\left\langle \nabla f+tu, \nabla f_t\right\rangle >0$ at any point where $\nabla f_t\neq 0$, and any point with $\nabla f_t=0$ is in $\sph^2\setminus W'$.
	Therefore, by lemma \ref{lemma:pointwise}, for any such $t$ there is a (unique) smooth metric $g_t$ such that $g_t$ induces the usual area measure and $\nabla_{g_t} f_t = \nabla f+tu$ (note that $g_t$ is also smooth at the critical points of $f_t$ because $g_t$ equals to the round metric in $\sph^2\setminus W'$ which is where the critical points of $f_t$ reside).
	
	It is easy to see that $g_t$ varies smoothly on $t$, and at $t=0$ equals to the round metric. Therefore this is an infinitesimal perturbation of the round metric where $f_t$ is an eigenfunction with eigenvalue $\lambda$ of $-\Delta_{g_t}$, and that eigenfunction has the specified nodal line configuration.
\end{proof}

\begin{proof}[theorem \ref{thm:main-perturb-result}]
	Let $X$ be an oval configuration which can be drawn in the $n\times n$ grid graph. By lemma \ref{lemma:from-drawing-to-coarse-perturb} there is a perturbation of $G'$ which nicely contains $X$. By lemma \ref{lemma:reduction} it follows that there is a perturbation of $G'$ which is equivalent to $X$. Therefore by lemma \ref{lemma:metric-perturb} the result follows.
\end{proof}

\appendix
\section{Appendix - Proof of lemma \ref{lemma:pointwise}}
\label{section:appendix-proof-of-pointwise-lemma}
\begin{proof}[lemma \ref{lemma:pointwise}]
	Working in a local coordinate chart, let $\mu = F d\nu$ where $\nu$ is the Lebesgue measure and $F>0$ is smooth. We need to find a symmetric matrix field $g$ such that $\omega = gu$ and $\left|g\right| = F^2$. Setting $A = \frac gF$, we get that the lemma is equivalent to the following claim:
	\begin{claim}\label{claim:pointwise-coords}
		Given two vectors $u,v\in\RR^2$ such that $\left\langle u,v\right\rangle>0$, there is a unique matrix $A\in SL_2(\RR)$ which is symmetric, positive-definite, and satisfies $Au=v$. Additionally, $A$ depends smoothly on $u,v$.
	\end{claim}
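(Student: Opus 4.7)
The plan is to prove the reduced Claim \ref{claim:pointwise-coords} by exhibiting an explicit formula for $A$. Writing $A = \begin{pmatrix} a & b \\ b & c\end{pmatrix}$, the conditions become the linear system $au_1 + bu_2 = v_1$, $bu_1 + cu_2 = v_2$ together with the nonlinear constraint $ac - b^2 = 1$. First I would observe that for a symmetric matrix in $SL_2(\RR)$, positive-definiteness is equivalent to positive trace: since $\det A = 1 > 0$ forces the two real eigenvalues to share a sign, positive-definiteness is controlled entirely by whether $a + c > 0$.

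Next I would solve the system directly. The hypothesis $\langle u, v\rangle > 0$ forces $u \neq 0$; using the two linear equations to express $a$ and $c$ affinely in terms of $b$ and substituting into $ac - b^2 = 1$, the coefficient of $b^2$ cancels, leaving a single linear equation $\langle u, v\rangle\, b = v_1 v_2 - u_1 u_2$. This has the unique solution $b = (v_1 v_2 - u_1 u_2)/\langle u, v\rangle$, and back-substitution gives the symmetric closed form
\[
A = \frac{1}{\langle u, v\rangle}\begin{pmatrix} v_1^2 + u_2^2 & v_1 v_2 - u_1 u_2 \\ v_1 v_2 - u_1 u_2 & v_2^2 + u_1^2\end{pmatrix}.
\]

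Finally I would verify that this explicit formula does everything required. Direct matrix multiplication using $\langle u, v\rangle = u_1 v_1 + u_2 v_2$ checks $Au = v$; a short expansion of the determinant gives numerator $(u_1 v_1 + u_2 v_2)^2 = \langle u, v\rangle^2$, hence $\det A = 1$; the trace $a + c = (u_1^2 + u_2^2 + v_1^2 + v_2^2)/\langle u, v\rangle$ is manifestly positive, giving positive-definiteness; and smoothness of $A$ in $(u, v)$ is immediate because $\langle u, v\rangle$ is a smooth strictly-positive function on the admissible domain. Uniqueness follows from the derivation above, since the equation for $b$ was linear with a unique solution and $a, c$ were then forced by the two linear equations.

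There is essentially no real obstacle here; the only small wrinkle is that the elimination step implicitly assumed one of $u_1, u_2$ is nonzero (both subcases behave the same up to swapping roles), but this is bypassed entirely by the fact that the final closed-form formula depends only on $\langle u, v\rangle \neq 0$ and can be verified on the nose without any case split.
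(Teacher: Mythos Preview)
Your proof is correct and arrives at exactly the same closed-form matrix as the paper, but the route is genuinely different. You solve the three scalar equations for $a,b,c$ directly, observing that the quadratic term in $b$ cancels so the system becomes linear; the paper instead argues more structurally, introducing $J=\begin{pmatrix}0&1\\-1&0\end{pmatrix}$ and noting that symmetry of $A$ forces $\langle AJv,u\rangle=0$, hence $AJv=\lambda Ju$, and then $\det A=1$ forces $\lambda=1$. This immediately gives $A\begin{pmatrix}u&Jv\end{pmatrix}=\begin{pmatrix}v&Ju\end{pmatrix}$ and thus $A=\frac{1}{\langle u,v\rangle}\begin{pmatrix}v&Ju\end{pmatrix}\begin{pmatrix}v&Ju\end{pmatrix}^t$, which is your matrix in disguise. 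The paper's argument is coordinate-free and yields uniqueness without any elimination step or case split on which coordinate of $u$ is nonzero; your argument is more elementary and hands-on, and the miraculous cancellation of the $b^2$ term is explained conceptually by the paper's $J$-trick. One small inaccuracy: your elimination actually assumes \emph{both} $u_1$ and $u_2$ are nonzero (not just one), since you divide by each to express $a$ and $c$; this is harmless because, as you say, the resulting closed form can be verified directly, and uniqueness in the degenerate cases is an equally short one-line check.
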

	Proving uniqueness:
	Let $J=\begin{pmatrix}
		0 & 1 \\ -1 & 0
	\end{pmatrix}$. Then we have $0 = \left\langle Jv,v\right\rangle=\left\langle Jv, Au\right\rangle = \left\langle AJv, u\right\rangle$. Therefore we have $AJv = \lambda Ju$ for some $\lambda\in\RR$. Note that $u, Jv$ are independent because we have $\left\langle u,v\right\rangle\neq 0$. Therefore we have
	\[
	A \begin{pmatrix} u & Jv\end{pmatrix} = \begin{pmatrix} v & \lambda Ju\end{pmatrix}
	.
	\]
	Taking determinants, we have
	\begin{multline*}
		-\left\langle u,v\right\rangle
		=
		-\left\langle u,v\right\rangle \det A
		=
		\left\langle u,J^2v\right\rangle \det A
		=
		\det A \det\left(\begin{pmatrix} u & Jv\end{pmatrix}\right)
		=\\=
		\det\left(\begin{pmatrix} v & \lambda Ju\end{pmatrix}\right) = \left\langle v,\lambda J^2 u\right\rangle = -\lambda \left\langle v,u\right\rangle
		,
	\end{multline*}
	therefore $\lambda=1$, and
	\[
	A = \begin{pmatrix} v &  Ju\end{pmatrix} \begin{pmatrix} u & Jv\end{pmatrix} ^{-1}
	=\frac1{\left\langle u,v\right\rangle} \begin{pmatrix} v &  Ju\end{pmatrix}  \begin{pmatrix} v &  Ju\end{pmatrix}^t
	,
	\]
	which means that $A$ is unique and smoothly depends on $u,v$. Existence follows by checking that the above formula for $A$ satisfies the conditions. (Note that the positive-definiteness of $A$ follows from the assumption that $\left\langle u,v\right\rangle>0$)
\end{proof}

\bibliographystyle{plain}
\if aa

\fi
\end{document}